\providecommand{\U}[1]{\protect\rule{.1in}{.1in}}
\newtheorem{theorem}{Theorem}[section]
\theoremstyle{plain}
\newtheorem{corollary}[theorem]{Corollary}
\newtheorem{lemma}[theorem]{Lemma}
\newtheorem{proposition}[theorem]{Proposition}
\theoremstyle{definition}
\newtheorem{definition}[theorem]{Definition}
\newtheorem{remark}{Remark}
\newtheorem{example}{Example}
\numberwithin{equation}{section}
\begin{document}
\title[Spaces admitting a coaxial homeomorphism]{Topological properties of spaces admitting a coaxial homeomorphism}
\author{Ross Geoghegan}
\address{Department of Mathematical Sciences, Binghamton University, Binghamton, New
York 13902-6000}
\email{ross@math.binghamton.edu}
\author{Craig Guilbault}
\address{Department of Mathematical Sciences, University of Wisconsin-Milwaukee,
Milwaukee, Wisconsin 53201}
\email{craigg@uwm.edu}
\author{Michael Mihalik}
\address{Department of Mathematics, Vanderbilt University, NashvilleTN, 37240}
\email{michael.l.mihalik@vanderbilt.edu}
\thanks{This research was supported in part by Simons Foundation Grants 207264 and
427244, CRG}
\thanks{Thanks to Phillip Guilbault for creating the illustrations in this document.}
\date{October 19, 2018}

\begin{abstract}
Wright \cite{Wr92} showed that, if a 1-ended simply connected locally compact
ANR $Y$ with pro-monomorphic fundamental group at infinity (i.e. representable
by an inverse sequence of monomorphisms) admits a $\mathbb{Z}$-action by
covering transformations, then that fundamental group at infinity can be
represented by an inverse sequence of finitely generated free groups.
Geoghegan and Guilbault \cite{GeGu12} strengthened that result, proving that
$Y$ also satisfies the crucial \emph{semistability} condition (i.e.
representable by an inverse sequence of epimorphisms).

Here we get a stronger theorem with weaker hypotheses. We drop the
\textquotedblleft pro-monomorphic hypothesis\textquotedblright\ and simply
assume that the $\mathbb{Z}$-action is generated by what we call a
\textquotedblleft coaxial\textquotedblright\ homeomorphism. In the
pro-monomorphic case every $\mathbb{Z}$-action by covering transformations is
generated by a coaxial homeomorphism, but coaxials occur in far greater
generality (often embedded in a cocompact action). When the generator is
coaxial, we obtain the sharp conclusion: $Y$ is proper 2-equivalent to the
product of a locally finite tree with $\mathbb{R}$. Even in the
pro-monomorphic case this is new: it says that, from the viewpoint of
fundamental group at infinity, the\ \textquotedblleft end\textquotedblright%
\ of $Y$ looks like the suspension of a totally disconnected compact set.

\end{abstract}
\maketitle

Let $Y$ be a simply connected, locally compact, absolute neighborhood retract
(ANR). (Recall that the class of ANR's includes such familiar and important
spaces as topological manifolds and locally finite $CW$ complexes.) Let
$\{C_{n}\}$ be an expanding sequence of compact subsets which exhausts $Y$ in
the sense that the union of the sets $C_{n}$ is the whole space. The
\textit{algebraic topology of} $Y$ \textit{at infinity} is studied by means of
the inverse sequence of spaces $\{Y-C_{n}\}$ where the bonds are inclusion
maps. So, for example, information about the $m^{\text{th}}$ homology of $Y$
at infinity would be obtained from the inverse sequence of abelian groups
$\{H_{m}(Y-C_{n})\}$. As a second example, the components at infinity are the
ends of $Y$, by which is meant (roughly) the members of the inverse sequence
of sets $\{{\pi}_{0}(Y-C_{n})\}$. All this is well-known\footnote{One source
for the general theory is \cite{Ge08}.}.

We always assume that $Y$ is simply connected. To keep things simple we assume
\textit{in this introduction only} that $Y$ has one end.

\textit{The equivariant case:} Suppose, in particular, that a group $G$ acts
\textit{cocompactly} as covering transformations on $Y$ (this implies that $G$
is finitely presented). Then, with suitable extra assumptions, the topological
invariants of $Y$ at infinity are invariants of the group $G$. The earliest
example is the number of ends of $Y$ which is a feature of $G$, independent of
the choice of $Y$; it is a classical theorem of Hopf that this number is
$0,1,2$ or $\infty$.

In this paper we add to the current understanding of the fundamental group at
infinity of $Y$, motivated particularly by the equivariant case. Pick a proper
ray $\omega:[0,\infty)\rightarrow Y$ in $Y$ as base ray, and, reparametrizing
if necessary, arrange that $\omega([n,n+1])$ lies in $Y-C_{n}$. Then we have
an inverse sequence of fundamental groups $\{{\pi}_{1}(Y-C_{n}),\omega(n)\}$,
where the bonding homomorphisms are defined using appropriate segments of
$\omega$. This\footnote{It is well-known that, up to pro-isomorphism, this is
independent of the choice of the sets $C_{n}$, though \textit{priori} it might
depend on $\omega$. However, in the semistable case (where we will find
ourselves in a moment) it is also independent of $\omega$; see \cite{Ge08}.}
is the \textit{fundamental pro-group of} $Y$ \textit{at infinity based at
}$\omega$. We are interested in finding the broadest possible hypotheses which
ensure that this is pro-isomorphic to a sequence of finitely generated free
groups with epimorphic bonding maps. The technical words describing these two
properties are \textquotedblleft semistable\textquotedblright\ (pro-isomorphic
to a sequence of epimorphisms) and \textquotedblleft
pro-free\textquotedblright\ (pro-isomorphic to a sequence of free groups).

There are many spaces $Y$ satisfying our hypotheses which lack the
semistability property\footnote{For example: cone off the infinite mapping
telescope formed by gluing together infinitely many copies of the mapping
cylinder of a degree $2$ map on the circle.}. However, none is known in the
equivariant case. In other words it is not known if a finitely presented group
exists which is not semistable at infinity.\footnote{If there were such a
group there would be certainly be a one-ended example; see \cite{Mi87}.}

By contrast, having a pro-free fundamental pro-group at infinity is a real
restriction in the equivariant case. While many groups have this property,
many do not. For example, the fundamental groups of Davis manifolds do not
have pro-free fundamental pro-groups at infinity.

Our aim is to isolate a feature of $Y$ which guarantees that the fundamental
pro-group at infinity is both semistable and pro-free. We do this by
considering an action of an infinite cyclic group $J$ on $Y$ by covering
transformations. If there is no such action then we have nothing to say, but
often there are many such actions. We denote a generator of $J$ by $j$, a
homeomorphism of $Y$. We say that such a $j$ is \textit{coaxial} if given any
compact subset $C$ of $Y$ there is a larger compact set $D$ of $Y$ such that
any loop in $Y-J\cdot D$ bounds in $Y-C$. By $J\cdot D$ we mean $\bigcup
_{m\in{\mathbb{Z}}}(j^{m}(D))$. Our main theorem is:

\begin{theorem}
\label{main} If there exists an infinite cyclic group $J$ acting as covering
transformations on $Y$ and generated by a coaxial homeomorphism then there is
a locally finite  tree $\mathbb{T}$ and a proper $2$-equivalence
$\widetilde{f}:Y\rightarrow\mathbb{T}\times\mathbb{R}$.
\end{theorem}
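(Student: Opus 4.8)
\emph{Proof proposal.} We work in the one-ended case --- the heart of the matter, and the setting in which ``the end'' of $Y$ literally makes sense. The starting observation is that the target $\Upsilon\times\mathbb{R}$ is contractible, being a product of two contractible spaces, and is, up to proper homotopy, an aspherical $2$-complex each of whose neighborhoods of infinity is homotopy equivalent to a finite graph. Consequently every proper map $\widetilde{f}\colon Y\to\Upsilon\times\mathbb{R}$ is automatically a $2$-equivalence of total spaces (both spaces are simply connected and $\pi_{2}$ of the target vanishes), and such an $\widetilde{f}$ is a \emph{proper} $2$-equivalence exactly when the associated pro-map of neighborhoods of infinity induces a pro-isomorphism on $\pi_{0}$ (the end space) and on the fundamental pro-group at infinity --- the $\pi_{2}$-at-infinity requirement being vacuous because the target's neighborhoods of infinity are aspherical. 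So the plan has three parts: (1) manufacture the tree $\Upsilon$ from the coaxial action near infinity; (2) verify that $Y$ and $\Upsilon\times\mathbb{R}$ have the same pro-$\pi_{0}$ and the same fundamental pro-group at infinity; (3) realize that agreement by an honest proper map.

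\emph{Step 1: combinatorics of the axis.} Fix a proper base ray $\omega$ and an exhaustion $C_{0}\subset C_{1}\subset\cdots$ of $Y$ by compacta with $\omega([k,\infty))\subset Y-C_{k}$. Coaxiality supplies compacta $D_{k}$, which one arranges to be cofinal and to satisfy $C_{k}\subset D_{k}\subset D_{k+1}$, so that the inclusion $Y-JD_{k}\hookrightarrow Y-C_{k}$ kills $\pi_{1}$. Since the $J$-action is proper there is, for each $k$, an $m_{k}$ with $j^{m}(D_{k})\cap D_{k}=\emptyset$ whenever $|m|\ge m_{k}$; hence $JD_{k}=\bigcup_{m}j^{m}(D_{k})$ is a closed, locally compact, $J$-invariant ``axial neighborhood,'' and $j$ merely permutes the components of $Y-JD_{k}$. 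I would extract two facts, and it is these that make a \emph{locally finite} tree appear: (a) $Y-JD_{k}$ has only finitely many $J$-orbits of components, the unbounded ones being carried onto one another by powers of $j$; and (b) the refinement maps $\pi_{0}(Y-JD_{k+1})\to\pi_{0}(Y-JD_{k})$, together with the data recording how components abut along $JD_{k}$, assemble into an inverse sequence of \emph{finite} graphs $G_{0}\leftarrow G_{1}\leftarrow\cdots$. Both (a) and (b) rely directly on properness of the action. One then takes $\Upsilon$ to be the locally finite tree realizing this inverse sequence --- equivalently, the tree whose space of ends is $\varprojlim|G_{k}|$ --- after, up to proper homotopy, discarding any superfluous loops; the absence of an \emph{essential} loop surviving at infinity is precisely what coaxiality buys.

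\emph{Step 2: identifying the pro-groups.} Decompose $Y-C_{k}$ by van Kampen into an ``axial part'' coming from $JD_{k}-C_{k}$ and a ``transverse part'' $Y-JD_{k}$. By coaxiality the transverse part contributes nothing to $\pi_{1}$, while the axial part --- a thin neighborhood of an $\mathbb{R}$-like axis exhausted by the $J$-orbits of the $D_{k}$ --- contributes a finitely generated free group: a loop around the axis, together with one generator for each pair of unbounded components that the axial neighborhood separates. Carrying this through the bonds $Y-C_{k+1}\hookrightarrow Y-C_{k}$ shows the fundamental pro-group at infinity of $Y$ is pro-isomorphic to an inverse sequence of finitely generated free groups with epimorphic bonds, and a bookkeeping comparison identifies that sequence with $\{\pi_{1}((\Upsilon\times\mathbb{R})-(T_{k}\times[-k,k]))\}$, where $T_{k}$ is the $k$-th finite subtree of $\Upsilon$. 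This at once recovers Wright's pro-freeness and the Geoghegan--Guilbault semistability and sharpens them: up to pro-isomorphism the end of $Y$ is the end of $\Upsilon\times\mathbb{R}$, that is, the suspension of the totally disconnected compactum $\varprojlim|G_{k}|$. The pro-$\pi_{0}$ assertion is the $\pi_{0}$-level shadow of the same decomposition.

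\emph{Step 3, and the main obstacle.} Finally $\widetilde{f}$ is built by induction over the exhaustion: send $C_{0}$ to a point of $\Upsilon\times\mathbb{R}$, then extend across each shell $C_{k+1}-C_{k}$ using the component-and-graph data of Step 1 so that $Y-C_{k}$ lands in a collar of $(\Upsilon-T_{k})\times\mathbb{R}$; properness is forced by respecting the two exhaustions, and Steps 1--2 certify that $\widetilde{f}$ realizes the required pro-isomorphisms. The genuine difficulty is concentrated in Step 1 --- converting the single soft hypothesis (``every loop outside the orbit of some compactum dies'') into the rigid conclusion that these loops organize into a \emph{genuinely locally finite} tree. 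Three points are delicate: (i) the finiteness of the per-level data, where properness of the $\mathbb{Z}$-action is indispensable and where the non-cocompact case demands the most care; (ii) the control of base rays relative to the axis, so that the bonds emerge as honest epimorphisms (whence semistability, not merely pro-epimorphism) and so that the two ends of the $\mathbb{R}$-factor are matched consistently with the two ``poles'' of the suspension; and (iii) the existence of \emph{compatible} tree realizations of the $G_{k}$ --- i.e. the vanishing of loops at infinity --- which is exactly where the full strength of coaxiality, as opposed to mere semistability, is used.
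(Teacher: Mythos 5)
Your plan inverts the logical order of the paper's argument, and the inversion is where it breaks. You propose first to \emph{compute} the fundamental pro-group of $Y$ at infinity by a van Kampen decomposition of $Y-C_k$ into a ``transverse part'' $Y-JD_k$ and an ``axial part'' $JD_k-C_k$, and only afterwards to realize the resulting abstract pro-isomorphism by a map. But coaxiality only says that the inclusion-induced map $\pi_1(Y-JD_k)\to\pi_1(Y-C_k)$ is trivial; it does not make the transverse piece's contribution vanish in the pushout at the same stage, and --- more seriously --- nothing in the hypotheses controls $\pi_1(JD_k-C_k)$ or its image in $\pi_1(Y-C_k)$. That set is just the orbit of a compact set with a compact set deleted; it is not a ``thin neighborhood of an $\mathbb{R}$-like axis,'' and since $Y$ is simply connected there is no ``loop around the axis'' generator of the kind you invoke (that loop lives in $J\backslash Y$, not in $Y$). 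Proving directly that $\operatorname{pro}$-$\pi_1(Y-C_k)$ is pro-free with epimorphic bonds of the predicted ranks is essentially the whole theorem, and the paper never attempts it head-on: instead it builds the tree and the comparison maps out of \emph{homological} winding data in the quotient --- the indices $n_{i,j}$ of $i_*(H_1(N_{i,j}))$ in $H_1(J\backslash Y)\cong\mathbb{Z}$ for components $N_{i,j}$ of neighborhoods of infinity of $J\backslash Y$ --- producing a labeled model tree $\Gamma$, a model base space $X_\Gamma$ made of mapping cylinders of circle maps, and the tree $\Upsilon=\Lambda^+$ as the associated (Bass--Serre-type) cover tree; the pro-$\pi_1$ statement is then a \emph{consequence} of a proper $2$-equivalence with the model, not an input. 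Relatedly, your Step 1 tree is assembled only from $\pi_0(Y-JD_k)$ (where, incidentally, nested components already form a tree, so there are no ``superfluous loops'' to discard); identifying that tree, even up to proper homotopy, with the tree that governs $\operatorname{pro}$-$\pi_1(Y)$ is itself part of what must be proved.

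The second gap is Step 3. A proper $2$-equivalence requires an actual map inducing pro-isomorphisms on $\operatorname{pro}$-$\pi_0$ and $\operatorname{pro}$-$\pi_1$ for \emph{every} proper base ray (equivalently, a map with a proper $2$-inverse); an abstract pro-isomorphism of invariants, even if Step 2 were secured, does not produce one, and since semistability is a conclusion rather than a hypothesis, base-ray independence cannot be assumed while you build it. The sentence ``properness is forced by respecting the two exhaustions'' is precisely where the paper's heavy lifting sits: it constructs $f\colon J\backslash Y\to X_\Gamma$ and a $1$-inverse $g^{(1)}$ explicitly from the $H_1$-data, lifts to $\widetilde f\colon Y\to\widetilde X_\Gamma$ and $\widetilde g^{(1)}$, and then --- because under the merely coaxial hypothesis $g$ cannot be extended properly downstairs --- extends $\widetilde g^{(1)}$ over the $2$-cells of $\widetilde X_\Gamma$ and builds the proper homotopies upstairs using quantitative control: the height function $\widetilde\rho$, bounds $h_i$ on heights of $2$-cells, the function $s(i,h,r)$ from Lemma \ref{Lemma 5}, and the ``narrowness'' of $2$-cells, all fed by the coaxial condition applied to a carefully reindexed exhaustion. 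None of this machinery (or a substitute for it) appears in your outline, so the proposal as written establishes neither the computation of the invariants nor the existence of the map realizing them.
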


The point becomes clear when one notes that (1) at infinity, the product
$\mathbb{T}\times\mathbb{R}$ looks like the suspension of the (totally
disconnected) set of ends of $\mathbb{T}$, and (2) the pro-isomorphism type of
the fundamental pro-group at infinity is invariant under proper $2$%
-equivalences. Thus Theorem \ref{main} implies:

\begin{corollary}
The existence of such a coaxial $j$ ensures that $Y$ has semistable and
pro-free fundamental pro-group at infinity.
\end{corollary}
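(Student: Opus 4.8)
The plan is to derive the Corollary from Theorem~\ref{main} by transporting the fundamental pro-group at infinity across the proper $2$-equivalence and then computing it explicitly for $\Upsilon\times\mathbb{R}$. First I would invoke Theorem~\ref{main} to produce a locally finite infinite tree $\Upsilon$ and a proper $2$-equivalence $\widetilde{f}:Y\to\Upsilon\times\mathbb{R}$. Since a proper $2$-equivalence induces a bijection of ends together with a base-ray-compatible pro-isomorphism of fundamental pro-groups at infinity (this is the stated invariance property~(2); see \cite{Ge08}), it suffices to show that $X:=\Upsilon\times\mathbb{R}$ is semistable and pro-free at infinity. In particular, because the neighborhoods of infinity in $X$ turn out to be connected (the suspension of a nonempty compact space is connected), $Y$ is one-ended as well, so there is no ambiguity in speaking of ``the'' fundamental pro-group at infinity.

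Then I would set up a convenient exhaustion of $X$. Fix a base vertex $v_{0}\in\Upsilon$, let $T_{n}\subseteq\Upsilon$ be the subtree spanned by the vertices within graph distance $n$ of $v_{0}$ (finite, since $\Upsilon$ is locally finite), and put $C_{n}:=T_{n}\times[-n,n]$, a compact exhaustion of $X$. Then $X\setminus C_{n}=P_{n}\cup Q_{n}$, where $P_{n}=(\Upsilon\setminus T_{n})\times\mathbb{R}$ and $Q_{n}=\Upsilon\times(\mathbb{R}\setminus[-n,n])$ are open. Each component of $\Upsilon\setminus T_{n}$ is an open subtree, hence contractible, and there are only finitely many, say $k_{n}$ of them, because only finitely many edges of $\Upsilon$ leave $T_{n}$; also $Q_{n}$ has two contractible components, and $P_{n}\cap Q_{n}$ is a finite disjoint union of contractible sets. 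A van Kampen (nerve) argument for this good open cover then shows $X\setminus C_{n}$ is connected with $\pi_{1}(X\setminus C_{n})$ free of finite rank $k_{n}-1$, the first Betti number of the finite bipartite ``nerve'' graph whose vertices are the components of $P_{n}$ and of $Q_{n}$ and whose edges are the components of $P_{n}\cap Q_{n}$.

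Finally I would examine the bonding maps $\pi_{1}(X\setminus C_{n+1})\to\pi_{1}(X\setminus C_{n})$ induced by inclusion. Tracing the nerve picture, a loop running out through a component $U$ of $\Upsilon\setminus T_{n}$ can be pushed into $X\setminus C_{n+1}$ exactly when $U$ meets the complement of $T_{n+1}$, i.e.\ exactly when the branch $U$ is infinite (here one uses local finiteness and K\"onig's lemma). Hence, after passing to a subsequence that absorbs the finitely many finite branches present at each stage, the fundamental pro-group at infinity of $X$ is pro-isomorphic to an inverse sequence of finitely generated free groups --- one free generator per infinite branch of $\Upsilon$ beyond $T_{n}$, modulo the basepoint branch --- in which each bond carries a free basis to a free basis with some generators sent to $1$. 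Such bonds are epimorphisms, so the fundamental pro-group at infinity of $X$, hence of $Y$, is pro-free and semistable; moreover this inverse sequence is exactly the fundamental pro-group at infinity of the suspension of the totally disconnected compact space of ends of $\Upsilon$, which makes precise observation~(1) above.

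The hard part will be the last step: making the bookkeeping of the bonding maps honest. One must choose genuinely open thickenings of $P_{n}$ and $Q_{n}$ so that van Kampen applies, keep base rays straight so that all the pro-isomorphisms are basepoint-compatible, and --- the real subtlety --- verify carefully that the finite branches of $\Upsilon$ cause no trouble: that discarding them at each stage is a pro-isomorphism, and that what survives really is a tower of finitely generated free groups with (pro-)epimorphic bonds. By contrast the first two steps are immediate: the first is Theorem~\ref{main} verbatim, and the second is the stated invariance of the fundamental pro-group at infinity under proper $2$-equivalences.
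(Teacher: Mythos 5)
Your proposal is correct and follows the paper's own route: the corollary is deduced from Theorem \ref{main} together with the invariance of $\operatorname{pro}$-$\pi_{1}$ at infinity under proper $2$-equivalences, plus an explicit verification that $\Upsilon\times\mathbb{R}$ is semistable and pro-free at infinity --- which the paper carries out in Proposition \ref{Prop: Model Z-space} by retracting neighborhoods of infinity onto frontiers homotopy equivalent to suspensions of finite vertex sets, essentially the same calculation as your van Kampen/nerve argument. The finite-branch bookkeeping you single out as the hard part is in fact vacuous in the paper's setting, since the tree $\Upsilon=\Lambda^{+}$ it produces is leafless by construction, so the bonding maps are surjective outright.
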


Theorem \ref{main} has a context in the literature. One says that the inverse
sequence $\{G_{n}\}$ of groups is \textit{pro-mono} if it is pro-isomorphic to
a sequence of groups whose bonds are monomorphisms. (So pro-mono is dual to
semistable.) Building on earlier work of Wright \cite{Wr92} two of us in
\cite{GeGu12} proved the following theorem:

\begin{theorem}
\label{mono} If the fundamental pro-group at infinity of $Y$ is pro-mono and
there is an infinite cyclic group $J$ acting as covering transformations on
$Y$ then $Y$ has semistable and pro-free fundamental pro-group at infinity.
\end{theorem}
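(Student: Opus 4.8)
The plan is to deduce Theorem~\ref{mono} from Theorem~\ref{main}, so that the entire burden becomes a single implication: \emph{if the fundamental pro-group at infinity of $Y$ is pro-mono, then every generator $j$ of a proper $\mathbb{Z}$-action on $Y$ is coaxial}. Granting this, Theorem~\ref{main} supplies an infinite tree $\Upsilon$ and a proper $2$-equivalence $Y\to\Upsilon\times\mathbb{R}$, and the Corollary to Theorem~\ref{main} then yields semistability and pro-freeness of the fundamental pro-group at infinity, since both are invariants of proper $2$-equivalence. (This route simultaneously recaptures Wright's pro-freeness statement, which is part of the same conclusion.) So I would state and prove the displayed implication as a lemma and let the rest follow formally.

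To prove that implication, fix a compact $C$ and an exhaustion $\{C_n\}$ of $Y$ with $C=C_0$, and recall $JD=\bigcup_{t\in\mathbb{Z}}j^t(D)$. From the standard characterization of pro-monomorphy of the tower $\{\pi_1(Y-C_n)\}$ I would first record an index $m$ beyond which the kernels of the bonds into $\pi_1(Y-C)$ have stabilized, and then take the prospective coaxial compactum to be $D=C_L$ for $L$ large (in particular $L\ge m$). Passing to an \emph{orbit} $JD$ buys two things: since $C_L\subseteq JD$, any loop $\alpha$ in $Y-JD$ automatically lies in $Y-C_L$, i.e.\ is a loop ``at level $L$''; and $JD$ is $J$-invariant ($j(JD)=JD$), so $j$ restricts to a self-homeomorphism of $Y-JD$. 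Given such an $\alpha$, simple connectivity of $Y$ fills it by a singular disk $F\colon D^2\to Y$. The image $F(D^2)$ is compact, so by properness of the $\mathbb{Z}$-action it meets only finitely many translates $j^t(D)$, and hence for all sufficiently large $\lvert s\rvert$ the translated disk $j^s\circ F$ misses $C$; thus the translated loop $j^s\alpha$ bounds in $Y-C$.

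The remaining --- and, I expect, the decisive --- step is the \emph{transfer}: to convert this conclusion about $j^s\alpha$ into the same conclusion about $\alpha$ itself, for instance by showing that $\alpha$ and $j^s\alpha$ already represent the same class far out, or by directly modifying the disk $F$ with translates so that it is pushed off $C$ without moving $\partial F=\alpha$. It is precisely here that pro-monomorphy must be used: for an arbitrary simply connected $Y$ there is no reason these two loops should become equal anywhere (and indeed coaxiality can fail), but pro-mono bounds how much a loop near infinity can change under the inward bonding maps, and, combined with the level control recorded above, this is what should force $\alpha$ and $j^s\alpha$ to agree once pushed far enough in. I would attempt the transfer by cutting $Y$ along the ``axis'' of the action into a bi-infinite string of fundamental-domain-like pieces permuted by $j$ and marching $\alpha$ one piece at a time, at each stage replacing the portion of the loop in one piece by a properly homotopic copy in the neighboring piece while keeping the support of the homotopy outside $JD$; properness is exactly what makes each local move possible and terminates the process in finitely many steps. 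Once the transfer is in hand, $\alpha$ bounds in $Y-C$, $j$ is coaxial, and Theorem~\ref{mono} follows. The real difficulty is the bookkeeping of this transfer --- simultaneously controlling the supports of all intermediate homotopies, and handling loops $\alpha$ that are geometrically small yet awkwardly interleaved among the translates of $D$ --- and this is where the substantive content of Wright's argument, and of the earlier Geoghegan--Guilbault refinement adding semistability, is concentrated; a streamlined form of it would have to be reproduced here.
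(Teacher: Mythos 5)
Your reduction is exactly the paper's: Theorem~\ref{mono} is deduced from Theorem~\ref{main} once one knows that, under the pro-mono hypothesis, any generator of a proper $\mathbb{Z}$-action is coaxial, and semistability/pro-freeness then follow because proper $2$-equivalences preserve $\operatorname{pro}$-$\pi_1$ and the model space $\Upsilon\times\mathbb{R}$ has the desired pro-group at infinity. The difference is how that key implication is handled. The paper does not reprove it: it is precisely the Proposition in \S\ref{Section: Coaxial and Strongly Coaxial homeomorphisms} quoted from Wright \cite{Wr92} (``If $Y$ is pro-monomorphic at infinity, then $j$ is coaxial''), so citing that published lemma closes the argument. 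You instead attempt a direct proof of it, and that attempt has a genuine gap: the ``transfer'' step is exactly the content of Wright's lemma, and you leave it as a hope rather than an argument (you say so yourself). Note that the part you do prove is contentless on its own --- ``$j^s\alpha$ bounds in $Y-C$'' is literally equivalent to ``$\alpha$ bounds in $Y-j^{-s}(C)$,'' so nothing about $C$ has yet been used --- and the two mechanisms you sketch for the transfer are not viable as stated. Pro-monomorphy controls kernels (a loop far out that dies in a fixed inner complement already dies at a definite level); it gives no relation whatsoever between the classes of $\alpha$ and its translate $j^s\alpha$, so there is no reason they ``agree once pushed far enough in.'' Likewise, a general proper $\mathbb{Z}$-action on an ANR has no axis or fundamental-domain decomposition along which one can ``march'' $\alpha$ piece by piece while keeping homotopies outside $J\cdot D$; no justification is offered for the local moves, and none is available at this level of generality. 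The Whitehead manifold illustrates why the difficulty is real: it is contractible with pro-mono fundamental pro-group at infinity, yet loops near infinity need not bound away from a given compactum, so the action must enter in an essential, more delicate way (in Wright's argument, by surgering/pushing the filling disk off the extreme translates of $D$ it meets, using the stabilized kernels at translated levels), not by comparing $\alpha$ with $j^s\alpha$.

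So: the architecture of your proposal coincides with the paper's proof, but as a self-contained argument it is incomplete at the one point where real work is required. The quickest correct repair is the paper's own move --- invoke Wright's lemma from \cite{Wr92} for ``pro-mono $\Rightarrow$ coaxial'' --- after which your remaining steps (Theorem~\ref{main}, invariance of $\operatorname{pro}$-$\pi_1$ under proper $2$-equivalence, and the computation of the pro-group of $\Upsilon\times\mathbb{R}$ as free groups with surjective bonds) are correct and match the paper.
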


Theorem \ref{mono} is a corollary of our new Theorem \ref{main} because, by a
lemma from \cite{Wr92}, when $Y$ satisfies the pro-mono hypothesis then, given
an infinite cyclic group $J$ acting as covering transformations on $Y$, the
generator $j$ of $J$ is coaxial. This indicates that the pro-mono hypothesis
is unnecessarily strong. We will see examples where some infinite cyclic
groups $J$ acting on $Y$ as covering transformations are generated by
coaxials, while others are not.

It should be noted that there is a large literature on semistability at
infinity of finitely presented groups (what we call here the equivariant
case). See, for example, \cite{Mi83}, \cite{Mi87}, \cite{MiTs92a},
\cite{MiTs92b}, \cite{Mi96a}, \cite{Mi96b}, and \cite{CoMi14}. The nature of
that literature is mostly about proving that a group $G$ formed by some group
theoretic constructions from simpler groups has the semistability property.
These theorems are by no means easy, and the widespread success tempts one to
ask if every finitely presented group is semistable at each end. We prefer to
be skeptical, and we see this paper, and our paper \cite{GGM2}, as attempts to
get to the essential topological nature of what semistability really entails.
We are of course motivated by the case where $Y$ is the universal cover of a
finite complex.

The layout of this paper is as follows. \S \ref{Sec: background} contains the
necessary background, including the algebra of inverse sequences and its use
in defining end invariants of topological spaces, such as fundamental group at
infinity. It also reviews the notions of $n$-equivalence and proper
$n$-equivalence. \S \ref{Section: Coaxial and Strongly Coaxial homeomorphisms}
discusses the new definitions that play a central role in this work:
\emph{coaxial} and \emph{strongly coaxial} homeomorphisms.
\S \ref{Section: Model spaces} describes and analyzes a collection of
\textquotedblleft model spaces\textquotedblright, like the space
$\Upsilon\times%
\mathbb{R}
$ featured in Theorem \ref{main}. In \S \ref{Section: Bass-Serre theory} we
briefly describe some connections between our model spaces and Bass-Serre
theory. Our main theorems are proved by associating spaces of interest with
model spaces, whose end behavior is particularly nice. In
\S \ref{Section: Associating models to proper Z-actions}, where most of the
serious work is done, those associations are made. In
\S \ref{Section: General Conclusions}, we assemble our main conclusions in
their most general forms.

\section{Definitions and Background\label{Sec: background}}

This section contains terminology, notation, and background information to be
used throughout; it is divided into four subsections. The first reviews the
category of spaces to which this work applies; the second contains some basic
algebraic theory of inverse sequences; the third employs that theory to
describe \textquotedblleft end invariants\textquotedblright\ of noncompact
spaces; the fourth reviews the notion of proper homotopy equivalence and a
useful relaxation to \textquotedblleft proper $n$%
-equivalence\textquotedblright. Experts on these topics can safely skip ahead
to the next section; those desiring more detail should see \cite{Ge08} or
\cite{Gu13}.

\subsection{Spaces}

All spaces are assumed to be separable and metrizable. A space $Y$ is an
\emph{ANR} (\emph{absolute neighborhood retract}) if, whenever it is embedded
as a closed subset of a metric space $Z$, some neighborhood $U$ of $Y$
retracts onto $Y$. All spaces under consideration here will be locally compact
ANRs. Manifolds, locally finite CW complexes, and proper CAT(0) spaces are
special cases of locally compact ANRs.

A CW complex $Y$ is \emph{strongly locally finite} if $\left\{  C\left(
e\right)  \mid e\text{ is a cell of }Y\right\}  $ is a locally finite cover of
$Y$. Here $C\left(  e\right)  $, the \emph{carrier} of $e$ is the smallest
subcomplex containing $e$. This is a technical condition satisfied by all
finite-dimensional locally finite CW complexes and all locally finite
polyhedra. All results presented here can be obtained within these subclasses,
but for full generality, we make use of the more general condition. A complete
discussion can be found in \cite{Ge08}.

\subsection{Algebra of inverse sequences}

In this subsection arrows denote homomorphisms, with $\twoheadrightarrow$ a
surjection and $\rightarrowtail$ an injection. The symbol $\cong$ indicates an isomorphism.

Let
\[
G_{0}\overset{\lambda_{1}}{\longleftarrow}G_{1}\overset{\lambda_{2}%
}{\longleftarrow}G_{2}\overset{\lambda_{3}}{\longleftarrow}\cdots
\]
be an inverse sequence of groups. A \emph{subsequence} of $\left\{
G_{i},\lambda_{i}\right\}  $ is an inverse sequence of the form
\[
\begin{diagram}
G_{i_{0}} & \lTo^{\lambda_{i_{0}+1}\circ\cdots\circ\lambda_{i_{1}}
} & G_{i_{1}} & \lTo^{\lambda_{i_{1}+1}\circ\cdots\circ
\lambda_{i_{2}}} & G_{i_{2}} & \lTo^{\lambda_{i_{2}+1}\circ
\cdots\circ\lambda_{i_{3}}} & \cdots.
\end{diagram}
\]
In the future we denote a composition $\lambda_{i}\circ\cdots\circ\lambda_{j}$
($i\leq j$) by $\lambda_{i,j}$.

Sequences $\left\{  G_{i},\lambda_{i}\right\}  $ and $\left\{  H_{i},\mu
_{i}\right\}  $ are \emph{pro-isomorphic} if, after passing to subsequences,
there exists a commuting \textquotedblleft ladder diagram\textquotedblright:
\begin{equation}
\begin{diagram} G_{i_{0}} & & \lTo^{\lambda_{i_{0}+1,i_{1}}} & & G_{i_{1}} & & \lTo^{\lambda_{i_{1}+1,i_{2}}} & & G_{i_{2}} & & \lTo^{\lambda_{i_{2}+1,i_{3}}}& & G_{i_{3}}& \cdots\\ & \luTo & & \ldTo & & \luTo & & \ldTo & & \luTo & & \ldTo &\\ & & H_{j_{0}} & & \lTo^{\mu_{j_{0}+1,j_{1}}} & & H_{j_{1}} & & \lTo^{\mu_{j_{1}+1,j_{2}}}& & H_{j_{2}} & & \lTo^{\mu_{j_{2}+1,j_{3}}} & & \cdots \end{diagram} \label{basic ladder diagram}%
\end{equation}
Clearly an inverse sequence is pro-isomorphic to any of its subsequences. To
avoid tedious notation, we sometimes do not distinguish $\left\{
G_{i},\lambda_{i}\right\}  $ from its subsequences. Instead we assume that
$\left\{  G_{i},\lambda_{i}\right\}  $ has the properties of a preferred
subsequence---prefaced by the words \textquotedblleft after passing to a
subsequence and relabeling\textquotedblright.

The \emph{inverse limit }of $\left\{  G_{i},\lambda_{i}\right\}  $ is the
subgroup of $\prod G_{i}$ defined by
\[
\underleftarrow{\lim}\left\{  G_{i},\lambda_{i}\right\}  =\left\{  \left.
\left(  g_{0},g_{1},g_{2},\cdots\right)  \in\prod_{i=0}^{\infty}%
G_{i}\right\vert \lambda_{i}\left(  g_{i}\right)  =g_{i-1}\right\}  .
\]
Note that, for each $i$, there is a \emph{projection homomorphism}
$p_{i}:\underleftarrow{\lim}\left\{  G_{i},\lambda_{i}\right\}  \rightarrow
G_{i}$. It is a standard fact that pro-isomorphic inverse sequences have
isomorphic inverse limits, but that passing to an inverse limit can result in
a loss of information. For that reason, we prefer to work with
(pro-isomorphism classes of) inverse sequences, rather than their limits.

An inverse sequence $\left\{  G_{i},\lambda_{i}\right\}  $ is \emph{stable} if
it is pro-isomorphic to a constant inverse sequence $\left\{
H,\operatorname{id}_{H}\right\}  $, or equivalently, a sequence $\left\{
H_{i},\mu_{i}\right\}  $ where each $\mu_{i}$ is an isomorphism. In these
cases, the projection homomorphisms take $\underleftarrow{\lim}\left\{
G_{i},\lambda_{i}\right\}  $ isomorphically onto $H$ and each of the$\ H_{i}$.

If $\left\{  G_{i},\lambda_{i}\right\}  $ is pro-isomorphic to $\left\{
H_{i},\mu_{i}\right\}  $, where each $\mu_{i}$ is an epimorphism, we call
$\left\{  G_{i},\lambda_{i}\right\}  $ \emph{semi\-stable }(or
\emph{Mittag-Leffler, }or\emph{\ pro-epimorphic}). Similarly, if $\left\{
H_{i},\mu_{i}\right\}  $ can be chosen so that each $\mu_{i}$ is a
monomorphism, $\left\{  G_{i},\lambda_{i}\right\}  $ is called
\emph{pro-\allowbreak mono\-mor\-phic}. It is easy to show that an inverse
sequence that is both semi\-stable and pro-\allowbreak mono\-mor\-phic is stable.

\subsection{Ends of spaces and their algebraic invariants\label{topology}}

Proper maps and proper homotopies will be reviewed in the next subsection. In
the meantime, we will go ahead and use special cases of those concepts applied
to rays, i.e., maps $r:[0,\infty)\rightarrow X$. Those unfamiliar with the
terms can look ahead for the definitions.

A subset $N$ of a space $X$ is a \emph{neighborhood of infinity} if
$\overline{X-N}$ is compact. By a standard argument, when $X$ is an ANR and
$C\subseteq X$ is compact, $X-C$ contains at most finitely many
\emph{unbounded} components, i.e., components with noncompact closures. If
$X-C$ has both bounded and unbounded components, the situation can be
simplified by letting $C^{\prime}$ consist of $C$ together with all bounded
components. Then $C^{\prime}$ is compact, and $X-C^{\prime}$ has only
unbounded components. A neighborhood of infinity is called \emph{efficient }if
all of its components are unbounded.

Let $X=N_{0}\supseteq N_{1}\supseteq N_{2}\supseteq\cdots$ be a nested cofinal
(i.e., $\bigcap_{i=0}^{\infty}N_{i}=\varnothing$) sequence of efficient
neighborhoods of infinity in $X$. For each $i$, let $\left\{  N_{i,j}\right\}
_{j=1}^{k_{i}}$ be the set of components of $N_{i}$. Then each sequence
$\varepsilon=\left(  N_{0,j_{0}},N_{1,j_{1}},N_{2,j_{2}},\cdots\right)  $ with
the property that $N_{0,j_{0}}\supseteq N_{1,j_{1}}\supseteq N_{2,j_{2}%
}\supseteq\cdots$ determines a distinct \emph{end} of $X$. By a slight abuse
of notation, we denote the set of all such sequences by $\mathcal{E}%
\emph{nds}\left(  X\right)  $\footnote{Since our definition depends upon the
choice of $\left\{  N_{i}\right\}  $, a more precise notation might be
$\mathcal{E}\emph{nds}_{\left\{  N_{i}\right\}  }\left(  X\right)  $. A
slightly more technical, definition can be used to define $\mathcal{E}%
\emph{nds}\left(  X\right)  $ without regards to a specific cofinal sequence.
Since the two approaches are easily seen to be equivalent, we have opted for
the simpler approach.}. Clearly, $X$ is 1-ended, i.e., $\left\vert
\mathcal{E}\emph{nds}\left(  X\right)  \right\vert =1$, if and only if each
$N_{i}$ is connected. Similarly, $X$ is $k$-ended ($k<\infty$) if and only if
the number of components of $N_{i}$ stabilizes at $k$ for large $i$.

An end $\varepsilon=\left(  N_{0,j_{0}},N_{1,j_{1}},N_{2,j_{2}},\cdots\right)
$ of $X$ will be called $\pi_{1}$\emph{-null} (in $X$) if, for sufficiently
large $t$, inclusion induces the trivial homomorphism $\pi_{1}\left(
N_{2,j_{t}}\right)  \rightarrow\pi_{1}\left(  X\right)  $, i.e., loops in
$N_{2,j_{t}}$ contract in $X$.

Another method for defining ends uses proper rays.\emph{ }Declare proper
$r,r^{\prime}:[0,\infty)\rightarrow X$ to be \textquotedblleft weakly
equivalent\textquotedblright\ if $\left.  r\right\vert _{\mathbb{N}}$ is
properly homotopic to $\left.  r^{\prime}\right\vert _{\mathbb{N}}$, where
$\mathbb{N}$ denotes the natural numbers; and let $\mathcal{E}\left(
X\right)  $ denote the set of weak equivalence classes. There is a natural
bijection between $\mathcal{E}\left(  X\right)  $ and $\mathcal{E}%
\emph{nds}\left(  X\right)  $ that associates, to an equivalence class of
proper rays, the nested sequence $\varepsilon=\left(  N_{0,j_{0}},N_{1,j_{1}%
},N_{2,j_{2}},\cdots\right)  $ with the property that, for each $i$, the image
of a representative ray $r$ eventually stays in $N_{i,j_{i}}$; in that case,
we say $r$ \emph{converges to} $\varepsilon$.

Declare proper rays $r,r^{\prime}:[0,\infty)\rightarrow X$ to be
\textquotedblleft strongly equivalent\textquotedblright\ if they are properly
homotopic. The set of strong equivalence classes, $\mathcal{SE}\left(
X\right)  $, is called the set of \emph{strong ends }of $X$. This set differs
from $\mathcal{E}\left(  X\right)  $ in that rays representing the same end of
$X$ can determine distinct strong ends.

Given a proper ray $r:[0,\infty)\rightarrow X$, choose a sequence
$0=x_{0}<x_{1}<x_{2}<\cdots$, such that $r\left(  [x_{i},\infty)\right)
\subseteq N_{i}$ for all $i$ and let $p_{i}=r\left(  x_{i}\right)  $. From
there, we may construct an inverse sequence
\begin{equation}
\pi_{1}\left(  N_{0},p_{0}\right)  \overset{\lambda_{1}}{\longleftarrow}%
\pi_{1}\left(  N_{1},p_{1}\right)  \overset{\lambda_{2}}{\longleftarrow}%
\pi_{1}\left(  N_{2},p_{2}\right)  \overset{\lambda_{3}}{\longleftarrow}%
\cdots.\medskip\label{Defn: pro-pi1}%
\end{equation}
where each $\lambda_{i+1}:\pi_{1}\left(  N_{i+1},p_{i+1}\right)
\rightarrow\pi_{1}\left(  N_{i},p_{i}\right)  $ is induced by inclusion
followed by the change of base point isomorphism determined by the path
$\left.  r\right\vert _{\left[  x_{i},x_{i+1}\right]  }$. The pro-isomorphism
class of (\ref{Defn: pro-pi1}) is independent of the sequence $\left\{
N_{i}\right\}  $ and the sequence $\left\{  x_{i}\right\}  $, so we use the
notation $\operatorname{pro}$-$\pi_{1}\left(  X,r\right)  $. For multi-ended
$X$, the data in (\ref{Defn: pro-pi1}) concerns only the components
$N_{i,j_{i}}$ of the $N_{i}$ containing $p_{i}$, so (\ref{Defn: pro-pi1}) is
the same as
\begin{equation}
\pi_{1}\left(  N_{0,j_{0}},p_{0}\right)  \overset{\lambda_{1}}{\longleftarrow
}\pi_{1}\left(  N_{1,j_{1}},p_{1}\right)  \overset{\lambda_{2}}{\longleftarrow
}\pi_{1}\left(  N_{2,j_{2}},p_{2}\right)  \overset{\lambda_{3}}{\longleftarrow
}\cdots.\medskip
\end{equation}
We view sequence (\ref{Defn: pro-pi1}) as a representative of the
\emph{fundamental pro-group of the end} $\varepsilon$\emph{ with base ray }%
$r$; sometimes, for emphasis, denoting it by $\operatorname{pro}$-$\pi
_{1}\left(  \varepsilon,r\right)  $. Clearly a base ray converging to a
different end leads to entirely different information about $X$.

Even if $r$ and $r^{\prime}$ converge to the same end $\varepsilon$,
$\operatorname{pro}$-$\pi_{1}\left(  \varepsilon,r\right)  $ and
$\operatorname{pro}$-$\pi_{1}\left(  \varepsilon,r^{\prime}\right)  $ can fail
to be pro-isomorphic. It is, however, a standard fact that, if $r$ and
$r^{\prime}$ are properly homotopic, $\operatorname{pro}$-$\pi_{1}\left(
X,r\right)  $ and $\operatorname{pro}$-$\pi_{1}\left(  X,r^{\prime}\right)  $
are pro-isomorphic. More about that situation in a moment.

Given the above setup, let $\varepsilon=\left(  N_{0,j_{0}},N_{1,j_{1}%
},N_{2,j_{2}},\cdots\right)  $. We say that

\begin{enumerate}
\item \label{List item 1}$X$ is \emph{simply connected at }$\varepsilon$\emph{
}if $\operatorname{pro}$-$\pi_{1}\left(  X,r\right)  $ is pro-trivial for some
proper ray $r$ converging to $\varepsilon$,

\item \label{List item 2}$X$ is \emph{semistable }(or \emph{pro-epimorphic})
at $\varepsilon$ if $\operatorname{pro}$-$\pi_{1}\left(  X,r\right)  $ is
pro-epimorphic for some proper ray $r$ converging to $\varepsilon$, and

\item \label{List item 3}$X$ is \emph{pro-monomorphic at }$\varepsilon$ if
$\operatorname{pro}$-$\pi_{1}\left(  X,r\right)  $ is pro-monomorphic for some
proper ray $r$ converging to $\varepsilon$.\medskip
\end{enumerate}

\noindent We say that $X$ is \emph{simply connected at infinity} if $X$ is
1-ended and simply connected at that end; $X$ is \emph{semistable at infinity}
if $X$ is 1-ended and semistable at that end; and $X$ is \emph{pro-monomorphic
at infinity} if $X$ is 1-ended and pro-monomorphic at that end.

Although $\operatorname{pro}$-$\pi_{1}\left(  \varepsilon,r\right)  $ can
depend upon $r$, the question of whether an end $\varepsilon$ [1-ended space
$X$] is simply connected, semistable, or pro-monomorphic at $\varepsilon$
[infinity] is independent of the base ray converging to $\varepsilon$. For
simple connectivity and semistability, that is a consequence of the following
important fact, whose proof can be found in Chapter 16 of \cite{Ge08}.
Likewise, but for different reasons (also found in \cite{Ge08}), the
pro-monomorphic property is independent of base ray.

\begin{proposition}
\label{Prop: pro-pi semistable}Let $\varepsilon=\left(  N_{0,j_{0}}%
,N_{1,j_{1}},N_{2,j_{2}},\cdots\right)  $ determine an end of a space $X$ and
$r:[0,\infty)\rightarrow X$ be a proper ray converging to $\varepsilon$. Then
the following are equivalent:

\begin{enumerate}
\item $\operatorname{pro}$-$\pi_{1}\left(  X,r\right)  $ is semistable.

\item All proper rays in $X$ that converge to $\varepsilon$ are properly
homotopic (hence, $\operatorname{pro}$-$\pi_{1}\left(  \varepsilon,r\right)  $
is independent of $r$).\medskip
\end{enumerate}
\end{proposition}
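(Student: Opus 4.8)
The plan is to prove the implication $(1)\Rightarrow(2)$ directly and then get $(2)\Rightarrow(1)$ almost for free. For the forward direction, suppose $\operatorname{pro}$-$\pi_1(X,r)$ is semistable, so after passing to a subsequence and relabeling we may assume the bonding maps $\lambda_i\colon\pi_1(N_i,p_i)\to\pi_1(N_{i-1},p_{i-1})$ are all epimorphisms; more precisely, using the standard Mittag--Leffler reformulation, for each $i$ the image of $\pi_1(N_{i+1},p_{i+1})$ in $\pi_1(N_i,p_i)$ equals the image of $\pi_1(N_k,p_k)$ for all $k\ge i+1$. Now let $s\colon[0,\infty)\to X$ be any other proper ray converging to the same end $\varepsilon$. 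Since $r$ and $s$ converge to $\varepsilon$, after reparametrizing we may arrange that both $r([x_i,\infty))$ and $s([x_i,\infty))$ lie in $N_{i,j_i}$, and (since the $N_{i,j_i}$ are path connected and $r,s$ converge to the \emph{same} end) we may choose a path $\alpha_i$ in $N_{i,j_i}$ from $r(x_i)$ to $s(x_i)$. The concatenations $\alpha_i$ together with the segments $r|_{[x_i,x_{i+1}]}$ and $s|_{[x_i,x_{i+1}]}$ assemble into a proper homotopy \emph{provided} we can close up the squares, i.e. provided each loop $\alpha_i\cdot(s|_{[x_i,x_{i+1}]})\cdot\alpha_{i+1}^{-1}\cdot(r|_{[x_i,x_{i+1}]})^{-1}$, when pushed into $N_{i,j_i}$, can be killed by working further out---and that is exactly what semistability buys us: the element of $\pi_1(N_{i,j_i})$ it represents lies in the image from every deeper stage, so by the Mittag--Leffler property it is carried by a loop in $N_{i+1,j_{i+1}}$, which lets us inductively improve the choice of $\alpha_{i+1}$ so that the square commutes on the nose. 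Carrying out this inductive adjustment---choosing the $\alpha_i$ one at a time so that each new square bounds---is the technical heart of the argument; it is a telescoping/back-and-forth construction standard in proper homotopy theory (cf. \cite{Ge08}), and it produces the desired proper homotopy $H\colon[0,\infty)\times[0,1]\to X$ from $r$ to $s$. Properness of $H$ is automatic from the construction, since the $i$th ``column'' of the homotopy lies in $N_{i,j_i}$ and $\{N_i\}$ is cofinal.

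For the reverse direction $(2)\Rightarrow(1)$, suppose every proper ray converging to $\varepsilon$ is properly homotopic to $r$. Fix $i$ and an element $g\in\pi_1(N_{i,j_i},p_i)$; I want to show $g$ lies in the image of $\lambda_{i+1,k}$ for all large $k$, which is the Mittag--Leffler condition and hence semistability. Represent $g$ by a loop $\ell$ in $N_{i,j_i}$ based at $p_i$. Build a new proper ray $r'$ by inserting, at parameter $x_i$, a ``detour'' that runs around $\ell$ and then continues along $r$; since $\ell$ lies in $N_{i,j_i}$, the ray $r'$ still converges to $\varepsilon$, so by hypothesis $r'$ is properly homotopic to $r$. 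That proper homotopy, restricted appropriately, exhibits the loop $\ell$---as an element of $\pi_1(N_{i,j_i})$---as coming from $\pi_1(N_{k,j_k})$ for every $k$, because the homotopy eventually leaves $\overline{X - N_k}$; chasing the image of $g$ through the ladder diagram of the proper homotopy gives precisely $g\in\operatorname{im}(\lambda_{i+1,k})$. Since $g$, $i$ were arbitrary, the inverse sequence is Mittag--Leffler, i.e. semistable.

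The main obstacle is the forward direction's inductive construction of the proper homotopy: one must simultaneously keep track of \emph{three} things---(a) that each newly chosen connecting path $\alpha_{i+1}$ stays in the correct component $N_{i+1,j_{i+1}}$, (b) that the resulting square in $N_{i,j_i}$ actually bounds a disk there, and (c) that the accumulated adjustments do not destroy properness---and the only tool that makes (b) work is the Mittag--Leffler reformulation of semistability, so the bookkeeping of images through the composed bonding maps $\lambda_{i,k}$ has to be set up carefully before the induction starts. Once the right subsequence is fixed and the Mittag--Leffler property is stated in the form ``$\operatorname{im}(\lambda_{i+1})=\operatorname{im}(\lambda_{i+1,k})$ for all $k>i$,'' the induction itself is routine, and properness falls out of cofinality of $\{N_i\}$. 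I would cite \cite{Ge08} for the general proper-homotopy machinery rather than reproving it, and present the argument in the two-implication form above.
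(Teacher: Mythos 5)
The paper itself states this proposition without proof (it is quoted as a standard fact, essentially \cite[\S 16.1]{Ge08}), so your argument must stand on its own, and both implications have genuine gaps. In $(1)\Rightarrow(2)$, the opening reduction is not available: semistability (Mittag--Leffler) does not let you assume, after passing to a subsequence, that the bonds are epimorphisms---passing to a subsequence never improves surjectivity (e.g.\ $G_i=\mathbb{Z}\oplus\mathbb{Z}$ with $\lambda_i(a,b)=(a,0)$ is Mittag--Leffler, yet every subsequence has the same non-surjective bond). More seriously, the heart of your induction is the claim that the square class $[\alpha_i\cdot(s|_{[x_i,x_{i+1}]})\cdot\alpha_{i+1}^{-1}\cdot(r|_{[x_i,x_{i+1}]})^{-1}]$ ``lies in the image from every deeper stage.'' There is no reason for that: this is an arbitrary element of $\pi_1(N_{i,j_i})$, and Mittag--Leffler only says the images $\operatorname{im}(\lambda_{i+1,k})$ stabilize, not that they exhaust $\pi_1(N_{i,j_i})$. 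Since modifying $\alpha_{i+1}$ by a loop in $N_{i+1,j_{i+1}}$ changes the square class exactly by an element of $\operatorname{im}(\lambda_{i+1})$, your level-by-level scheme of making each square ``commute on the nose'' cannot be run. The correct argument does not fix the squares one at a time at their own level; it chooses all correcting loops coherently through the \emph{stable images} (loops coming from arbitrarily far out), allowing the filling disks to live several stages down---equivalently, it is the nonabelian $\underleftarrow{\lim}^{1}$ argument: strong ends over $\varepsilon$ are governed by $\underleftarrow{\lim}^{1}\{\pi_1(N_{i,j_i})\}$, and Mittag--Leffler towers have trivial $\underleftarrow{\lim}^{1}$ because the tower of stable images has surjective bonds.

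The direction $(2)\Rightarrow(1)$ fails more decisively. What you set out to prove---that every $g\in\pi_1(N_{i,j_i})$ lies in $\operatorname{im}(\lambda_{i+1,k})$ for all large $k$---is strictly stronger than Mittag--Leffler and is false even for semistable ends (the telescope realizing $\{\mathbb{Z}^2,(a,b)\mapsto(a,0)\}$ satisfies (2) but its bonds are not surjective). And the mechanism does not deliver it: if $r'$ is obtained from $r$ by inserting a detour around a fixed loop $\ell\subseteq N_{i,j_i}$, then $r'$ is properly homotopic to $r$ \emph{unconditionally}, because proper homotopies of rays need not preserve the initial point---one can reel the detour back to the start of the ray (a homotopy supported over $[0,x_i]$, with moving initial point) without ever pushing $\ell$ into a deeper $N_k$. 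So hypothesis (2) is not being used, and no $\operatorname{pro}$-$\pi_1$ information can be extracted this way. The standard proof is a contrapositive: if the images $\operatorname{im}(\lambda_{i+1,k})$ fail to stabilize, choose $g_n\in\operatorname{im}(\lambda_{i+1,k_n})\setminus\operatorname{im}(\lambda_{i+1,k_{n+1}})$, represent them by loops in the deeper and deeper neighborhoods $N_{k_n}$, and splice these infinitely many detours into $r$ at parameters tending to infinity; the resulting ray is proper and converges to $\varepsilon$, but the nonstabilizing images make the resulting discrepancy obstruction nontrivial, so it is not properly homotopic to $r$, contradicting (2). I recommend reworking both directions along these lines, following \cite{Ge08}.
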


A parallel theory of $\operatorname*{pro}$-$H_{1}\left(  X;%
\mathbb{Z}
\right)  $ can be constructed in a manner similar to the above. Since base
points and connectivity are no longer issues; $\operatorname*{pro}$%
-$H_{1}\left(  X;%
\mathbb{Z}
\right)  $ is represented by%
\begin{equation}
H_{1}\left(  N_{0};%
\mathbb{Z}
\right)  \overset{i_{1\ast}}{\longleftarrow}H_{1}\left(  N_{1};%
\mathbb{Z}
\right)  \overset{i_{2\ast}}{\longleftarrow}H_{1}\left(  N_{2};%
\mathbb{Z}
\right)  \overset{i_{3\ast}}{\longleftarrow}\cdots.\medskip
\label{Defn of pro-H1}%
\end{equation}
where all maps are induced by inclusion. In the 1-ended case,
(\ref{Defn of pro-H1}) is just the abelianization of (\ref{Defn: pro-pi1}),
but in general, $\operatorname*{pro}$-$H_{1}\left(  X;%
\mathbb{Z}
\right)  $ contains information about all ends of $X$. To focus on a single
end $\varepsilon=\left(  N_{0,j_{0}},N_{1,j_{1}},N_{2,j_{2}},\cdots\right)  $,
we can define $\operatorname*{pro}$-$H_{1}\left(  \varepsilon;%
\mathbb{Z}
\right)  $ and represent it by the sequence
\begin{equation}
H_{1}\left(  N_{0,j_{0}};%
\mathbb{Z}
\right)  \overset{i_{1\ast}}{\longleftarrow}H_{1}\left(  N_{1,j_{1}};%
\mathbb{Z}
\right)  \overset{i_{2\ast}}{\longleftarrow}H_{1}\left(  N_{2,j_{2}};%
\mathbb{Z}
\right)  \overset{i_{3\ast}}{\longleftarrow}\cdots.\medskip
\label{Defn of pro-H1 at epsilon}%
\end{equation}

In analogy with items (\ref{List item 1})-(\ref{List item 3}) above,
\emph{1-acyclic at }$\varepsilon$, $H_{1}$\emph{-semistable at }$\varepsilon$,
and $H_{1}$\emph{-pro-monomorphic at }$\varepsilon$ can be formulated in the
obvious ways.

\begin{remark}
Although we have focused on the $k=1$ case, the same approach leads to
definitions of $\operatorname*{pro}$-$\pi_{k}\left(  X,r\right)  $ and
$\operatorname*{pro}$-$H_{k}\left(  X;%
\mathbb{Z}
\right)  $ for all $k\geq0$. The $k=0$ cases provide more ways to
\textquotedblleft count\textquotedblright\ the ends of $X$.\bigskip
\end{remark}

\subsection{Proper homotopy equivalences and proper $n$%
-equivalences\label{Subsection: Proper homotopy equivalences and n-equivalences}%
}

A map $f:X\rightarrow Y$ is \emph{proper} if $f^{-1}\left(  C\right)  $ is
compact for all compact $C\subseteq Y$. Maps $f_{0},f_{1}:X\rightarrow Y$ are
\emph{properly homotopic} is there is a proper map $H:X\times\left[
0,1\right]  \rightarrow Y$, with $H_{0}=f_{0}$ and $H_{1}=f_{1}$; in that
case, we call $H$ a \emph{proper homotopy} between $f_{0}$ and $f_{1}$ and
write $f_{0}\overset{p}{\simeq}$ $f_{1}$. Call $f:X\rightarrow Y$ is a
\emph{proper homotopy equivalence} if there exists a proper map
$g:Y\rightarrow X$ such that $gf\overset{p}{\simeq}\operatorname*{id}_{X}$ and
$fg\overset{p}{\simeq}Y$. In that case we say $X$ and $Y$ are \emph{proper
homotopy equivalent }and write $X\overset{p}{\simeq}Y$.

For our purposes, the key observation is that proper homotopy equivalences
preserve end invariants. In particular, a proper homotopy equivalence
$f:X\rightarrow Y$ induces a bijection between $\mathcal{E}\emph{nds}\left(
X\right)  $ and $\mathcal{E}\emph{nds}\left(  Y\right)  $; and if $f$ is a
proper homotopy equivalence and $r$ is a proper ray in $X$, then
$\operatorname*{pro}$-$\pi_{k}\left(  Y,f\circ r\right)  $ and
$\operatorname*{pro}$-$H_{k}\left(  Y;%
\mathbb{Z}
\right)  $ are pro-isomorphic to $\operatorname*{pro}$-$\pi_{k}\left(
X,r\right)  $ and $\operatorname*{pro}$-$H_{k}\left(  X;%
\mathbb{Z}
\right)  $, respectively, for all $k$.

The notion of proper homotopy equivalence often allows us to swap a generic
locally compact ANR for a locally finite polyhedron. The key tool is the
following theorem of West.

\begin{theorem}
[\cite{We77}]\label{Theorem: West's Theorem}Every compact ANR $X$ is homotopy
equivalent to a finite polyhedron; every locally compact ANR is proper
homotopy equivalent to a locally finite polyhedron.\medskip
\end{theorem}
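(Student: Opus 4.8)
The plan is to reduce the statement to two deep results of Hilbert cube manifold theory, both external to this paper: Edwards' theorem that the product of any locally compact ANR with the Hilbert cube $Q=\prod_{i=1}^{\infty}[0,1]$ is a $Q$-manifold (i.e.\ is locally homeomorphic to open subsets of $Q$), and Chapman's triangulation theorem that every $Q$-manifold is homeomorphic to $\left|K\right|\times Q$ for some locally finite simplicial complex $K$, with $K$ finite precisely when the $Q$-manifold is compact. Granting these, the theorem becomes a short formal argument about the contractible factor $Q$.

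First consider a compact ANR $X$. Edwards' theorem makes $X\times Q$ a compact $Q$-manifold, so Chapman's theorem supplies a finite complex $K$ and a homeomorphism $h\colon X\times Q\to\left|K\right|\times Q$. Since $Q$ is contractible, a contraction of $Q$ to a point induces, in the obvious way, a homotopy from the identity of $X\times Q$ to a map onto $X\times\{q_{0}\}$, and similarly for $\left|K\right|\times Q$; hence the first-factor projections $p_{X}\colon X\times Q\to X$ and $p_{K}\colon\left|K\right|\times Q\to\left|K\right|$ are homotopy equivalences. Composing a homotopy inverse of $p_{X}$ with $h$ and then $p_{K}$ exhibits $X\simeq\left|K\right|$, a finite polyhedron.

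For a locally compact ANR $X$, the only change is that $X\times Q$ is now a (possibly noncompact) boundaryless $Q$-manifold, so Chapman's theorem yields a \emph{locally finite} complex $K$ and a homeomorphism $h\colon X\times Q\to\left|K\right|\times Q$. The point to check is that the maps above are now proper: since $Q$ is compact, $p_{X}^{-1}(C)=C\times Q$ is compact for compact $C\subseteq X$, so $p_{X}$ (and likewise $p_{K}$) is proper; and the contracting homotopy $X\times Q\times[0,1]\to X\times Q$, $(x,q,t)\mapsto(x,c(q,t))$, has the preimage of any compact set contained in (its projection to $X$, a compact set) $\times\,Q\times[0,1]$, hence is proper as well. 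Therefore $p_{X}$ and $p_{K}$ are proper homotopy equivalences, and composing as before gives $X\overset{p}{\simeq}\left|K\right|$ with $\left|K\right|$ a locally finite polyhedron.

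The genuine obstacle is entirely contained in the two cited theorems: neither Edwards' product theorem nor Chapman's triangulation theorem is elementary, and together they constitute the hard core of the result. (In the original source the homotopy statement is obtained by a more hands-on route, realizing a compact ANR as an inverse limit of finite polyhedra along maps that become near-homeomorphisms after stabilizing by $Q$; the factorization through $Q$-manifolds described here is the cleanest modern packaging.) Once those inputs are accepted, everything else — in particular all of the properness bookkeeping in the locally compact case — is routine, because every map involved is literally a product of a map with the compact factor $Q$, or a projection killing that factor.
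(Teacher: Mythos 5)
This statement appears in the paper purely as a citation to West \cite{We77}; no proof is given there, so there is nothing internal to compare your argument against. Your derivation is correct as a proof modulo the two quoted external theorems: Edwards' theorem that $X\times Q$ is a Hilbert cube manifold for any locally compact (separable metric) ANR $X$, and Chapman's triangulation theorem that every such $Q$-manifold is homeomorphic to $\left|K\right|\times Q$ with $K$ a locally finite simplicial complex, finite exactly in the compact case. Your properness bookkeeping is also right: because $Q$ is compact, the projection $p_X$ has $p_X^{-1}(C)=C\times Q$ compact, the slice inclusion $x\mapsto(x,q_0)$ is proper, and the contracting homotopy is proper, so $p_X$ and $p_K$ are proper homotopy equivalences and conjugating by the homeomorphism $h$ gives $X\overset{p}{\simeq}\left|K\right|$. (One small hypothesis worth making explicit is separability, which Chapman's triangulation theorem needs and which the paper builds into its standing assumptions on spaces.) Note that this Edwards--Chapman packaging is not West's original route: West proved the compact case of Borsuk's conjecture by exhibiting a compact ANR as a cell-like image of a $Q$-manifold via an inverse-limit/mapping-cylinder construction, with Edwards' stronger product theorem coming separately; your version is the cleaner modern argument and has the advantage of delivering the proper, locally compact statement by the same two lines of properness checking.
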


If one is primarily interested in low-dimensional invariants, requiring a
[proper] homotopy equivalence is excessive. For $n>0$, a map between CW
complexes $f:X\rightarrow Y$ is an $n$\emph{-equivalence }if there is a map
$g:Y^{\left(  n\right)  }\rightarrow X$ such that $\left.  gf\right\vert
_{X^{\left(  n-1\right)  }}$ is homotopic to $X^{\left(  n-1\right)
}\hookrightarrow X$ and $fg$ is homotopic to $Y^{\left(  n-1\right)
}\hookrightarrow Y$. If $X$ and $Y$ are locally finite (or, more generally,
locally finite type) and each map and each homotopy is proper, we call $f$ a
\emph{proper }$n$\emph{-equivalence}. Given these conditions, $g$ is called a
\emph{[proper] }$n$\emph{-inverse} for $f$.\footnote{We have relaxed the
definitions from \cite{Ge08}, which required that (proper) n-inverses be
defined on all of $Y$. With that change, the reverse implications in
Proposition \ref{Prop: n-equivalences preserve (n-1)-invariants} become true
as well.}

\begin{example}
Every inclusion $X^{\left(  n\right)  }\hookrightarrow X$ is a proper $n$-equivalence.
\end{example}

Of key importance to this paper is the following fact which is well known to experts.

\begin{proposition}
\label{Prop: n-equivalences preserve (n-1)-invariants}A map
$f:(X,x)\rightarrow(Y,y)$ between pointed CW complexes is an $n$-equivalence
if and only if $f_{\#}:\pi_{k}\left(  X,x\right)  \rightarrow\pi_{k}\left(
Y,y\right)  $ is an isomorphism for all $k\leq n-1$. If $X$ and $Y$ are
strongly locally finite and $f$ is proper, then $f$ is a proper $n$%
-equivalence if and only if $f$ is an $n$-equivalence which induces a
pro-isomorphism between $\operatorname{pro}$-$\pi_{k}\left(  X,r\right)  $ and
$\operatorname{pro}$-$\pi_{k}\left(  Y,f\circ r\right)  $ for each proper ray
$r$ and all $k\leq n-1$.
\end{proposition}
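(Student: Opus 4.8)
The plan is to prove both biconditionals by unwinding the definitions of $n$-equivalence and proper $n$-equivalence and then invoking standard obstruction-theoretic facts. I will treat the non-proper statement first, since the proper statement builds on it. For the forward direction of the first biconditional, suppose $f$ is an $n$-equivalence with $n$-inverse $g:Y^{(n)}\to X$. The homotopy $gf|_{X^{(n-1)}}\simeq X^{(n-1)}\hookrightarrow X$ shows that $f_\#:\pi_k(X,x)\to\pi_k(Y,y)$ is injective for $k\le n-1$ (any nullhomotopy of $f\circ\sigma$ for a $k$-sphere $\sigma$ in $X^{(n-1)}$ pushes back via $g$ to a nullhomotopy of $\sigma$), and $fg\simeq Y^{(n-1)}\hookrightarrow Y$ shows $f_\#$ is surjective for $k\le n-1$ (a $k$-sphere in $Y$ can be homotoped into $Y^{(n-1)}$ when $k\le n-1$, then realized up to homotopy as $f\circ(g\circ\text{that sphere})$). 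For the converse, assume $f_\#$ is an isomorphism on $\pi_k$ for $k\le n-1$; replacing $f$ by a mapping-cylinder inclusion we may take $f$ to be an inclusion $X\hookrightarrow Y$ with $(Y,X)$ $n$-connected, and then the standard compression/extension lemma lets us build a retraction $g:Y^{(n)}\to X$ together with the required homotopies cell-by-cell over the skeleta, the obstruction to each extension living in a relative homotopy group $\pi_{k}(Y,X)=0$ for $k\le n$.

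For the proper statement, assume $X,Y$ are strongly locally finite and $f$ proper. If $f$ is a proper $n$-equivalence, then it is in particular an $n$-equivalence, so the first biconditional gives the $\pi_k$-isomorphism for $k\le n-1$; and the proper $n$-inverse $g$ together with the proper homotopies, when applied to a proper ray $r$ and to nested neighborhoods of infinity, restrict to give the ladder diagram exhibiting a pro-isomorphism between $\operatorname{pro}$-$\pi_k(X,r)$ and $\operatorname{pro}$-$\pi_k(Y,f\circ r)$ for $k\le n-1$ — this is the point of the footnoted relaxation of the definition, which ensures the inverse $g$ and the homotopies are genuinely proper and hence carry end information. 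Conversely, suppose $f$ is an $n$-equivalence inducing a pro-$\pi_k$-isomorphism for all $k\le n-1$ and all proper rays. Here I would again reduce to the case where $f$ is a proper inclusion $X\hookrightarrow Y$ of strongly locally finite complexes (using West's theorem, Theorem~\ref{Theorem: West's Theorem}, and the proper mapping cylinder). The pro-$\pi_k$ hypothesis, combined with $(Y,X)$ being $n$-connected, says that the relative homotopy "at infinity" also vanishes in the appropriate pro-sense, so the cell-by-cell construction of the $n$-inverse $g$ can be carried out with uniformly controlled (i.e.\ proper) choices: strong local finiteness guarantees that the carriers of cells form a locally finite cover, so one can perform the compressions within bounded neighborhoods and assemble a proper map and proper homotopies.

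The main obstacle is the converse direction of the proper statement — specifically, upgrading the cell-by-cell obstruction-theory construction of the $n$-inverse and the homotopies so that everything is \emph{proper}. The non-proper obstruction theory is entirely standard, but to get properness one must ensure that the homotopies moving cells of $Y$ into $X$ (and the homotopies $gf\simeq\mathrm{id}$, $fg\simeq\mathrm{id}$ over the skeleta) do not "wander off to infinity": each such homotopy, applied to a cell $e$, should stay within a controlled neighborhood of the carrier $C(e)$. This is exactly where strong local finiteness is essential and where the pro-$\pi_k$ hypothesis (as opposed to mere $\pi_k$-isomorphism) is used — it provides, for each neighborhood of infinity $N_i$, a smaller $N_{i+1}$ inside which the relevant relative homotopy classes already die, so the compressions can be performed without leaving a prescribed neighborhood of infinity. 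Packaging this bookkeeping cleanly is the only real work; the algebra of assembling the resulting ladder diagrams to certify the pro-isomorphism, and the reverse bookkeeping in the forward direction, are routine given the footnoted definition of (proper) $n$-inverse.
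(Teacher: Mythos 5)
Your proposal is correct and follows essentially the same route as the paper: the absolute statement via the standard Whitehead-theorem argument (mapping-cylinder reduction plus cell-by-cell compression), and the proper statement via the natural adaptation of that argument to the proper category, with the pro-$\pi_k$ hypothesis controlling the compressions near infinity. The paper does not carry out this work itself but simply cites \cite{Ge08} (Chapter 16 for the forward implications and Props.\ 4.1.4 and 17.1.1 for the converses), so your sketch is an expansion of the same argument the authors invoke.
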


\begin{proof}
For the absolute (non-proper) assertion, the forward implication is
straight-forward, while the reverse implication follows from a small variation
on the standard proof of the Whitehead theorem. The proper assertion follows
from the natural adaptation of those proofs to the proper category. See
\cite[Chap.16]{Ge08} for the forward implication and \cite[Props. 4.1.4 and
17.1.1]{Ge08} for the converse. Finite-dimensionality is not an issue here
since our maps need only be defined on $n$-skeleta.
\end{proof}

\begin{remark}
The converse of the proper version of Proposition
\ref{Prop: n-equivalences preserve (n-1)-invariants} can be strengthened as
follows: $f$ is a proper $n$-equivalence if $f$ is an $n$-equivalence and
there exists a representative $r$ from each element of $E\left(  X\right)  $
for which $f$ induces pro-isomorphisms between $\operatorname{pro}$-$\pi
_{k}\left(  X,r\right)  $ and $\operatorname{pro}$-$\pi_{k}\left(  Y,f\circ
r\right)  $ for all $k\leq n-1.$
\end{remark}

In a similar vein we have:

\begin{proposition}
If $f:X\rightarrow Y$ is an $n$-equivalence between CW complexes then
$f_{\ast}:H_{k}\left(  X;%
\mathbb{Z}
\right)  \rightarrow H_{k}\left(  Y;%
\mathbb{Z}
\right)  $ is an isomorphism for all $k\leq n-1$. If $f$ is a proper
$n$-equivalence then, in addition, $f$ induces pro-isomorphisms between
$\operatorname{pro}$-$H_{k}\left(  X;%
\mathbb{Z}
\right)  $ and $\operatorname{pro}$-$H_{k}\left(  Y;%
\mathbb{Z}
\right)  $ for all $k\leq n-1$.
\end{proposition}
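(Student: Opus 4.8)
The plan is to give one elementary skeletal argument that proves both assertions at once: the proper statement is the absolute one carried out ``at infinity''.

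\emph{Absolute statement.} First homotope $f$ to a cellular map; this changes neither $f_{\ast}$ on homology nor, by Proposition~\ref{Prop: n-equivalences preserve (n-1)-invariants}, the property of being an $n$-equivalence. By cellular approximation we may also take the $n$-inverse $g:Y^{(n)}\to X$ to be cellular. Recall the standard fact that for any CW complex $Z$ the skeletal inclusion $Z^{(m)}\hookrightarrow Z$ induces an isomorphism on $H_k$ for $k<m$ and an epimorphism for $k=m$. Fix $k\le n-1$. For surjectivity of $f_{\ast}:H_k(X)\to H_k(Y)$, lift $\alpha\in H_k(Y)$ through the isomorphism $H_k(Y^{(n)})\cong H_k(Y)$ to $\bar{\alpha}$; since $fg\overset{p}{\simeq}\bigl(Y^{(n)}\hookrightarrow Y\bigr)$ we get $f_{\ast}g_{\ast}(\bar{\alpha})=(fg)_{\ast}(\bar{\alpha})=\alpha$, so $g_{\ast}(\bar{\alpha})$ is a preimage. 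For injectivity, suppose $f_{\ast}(\gamma)=0$; write $\gamma$ as the image of some $\bar{\gamma}\in H_k(X^{(n-1)})$ (epimorphism of the skeletal inclusion, using $k\le n-1$), and push $\bar{\gamma}$ forward along the cellular map $f|_{X^{(n-1)}}:X^{(n-1)}\to Y^{(n-1)}\subseteq Y^{(n)}$. Its image in $H_k(Y)$ is $f_{\ast}(\gamma)=0$, hence by the isomorphism $H_k(Y^{(n)})\cong H_k(Y)$ its image already vanishes in $H_k(Y^{(n)})$; applying $g_{\ast}$ and using $g\circ f|_{X^{(n-1)}}\overset{p}{\simeq}\bigl(X^{(n-1)}\hookrightarrow X\bigr)$ yields $\gamma=0$. (Alternatively the absolute statement follows from Proposition~\ref{Prop: n-equivalences preserve (n-1)-invariants} and the Hurewicz theorem applied after passing to universal covers, whose fundamental groups are identified by that same proposition; but the skeletal argument above is what adapts to the proper setting.)

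\emph{Proper statement.} We work under the strong-local-finiteness hypotheses, so that cofinal sequences of neighborhoods of infinity may be taken to be subcomplexes and $\operatorname{pro}$-$H_k(X)$, $\operatorname{pro}$-$H_k(Y)$ are represented by the inverse sequences of the groups $H_k$ of those subcomplexes. Properness of $f$, of $g:Y^{(n)}\to X$, and of the two homotopies gives: for every neighborhood of infinity $N$ in $X$ (resp.\ $Y$) there is one in $Y$ (resp.\ $X$) whose $f$-image (resp.\ $g$-image of its $n$-skeleton) lies in $N$, and the homotopies, restricted to a small enough neighborhood of infinity, stay inside any prescribed one. Interleaving these constraints, pass to cofinal subsequences $\{N_i^{X}\}$, $\{N_i^{Y}\}$ of subcomplex neighborhoods of infinity so that $f$ carries $N_i^{X}$ into $N_i^{Y}$, $g$ carries $(N_i^{Y})^{(n)}$ into $N_{i-1}^{X}$, and the composites $gf$, $fg$ are properly homotopic \emph{within} the relevant neighborhood of infinity to the bonding inclusions. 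Running the surjectivity/injectivity argument above verbatim with $N_i^{X}$ for $X$ and $N_i^{Y}$ for $Y$, and repeatedly invoking the skeletal inclusion $(N_i^{Y})^{(n)}\hookrightarrow N_i^{Y}$, which is an $H_k$-isomorphism since $k\le n-1<n$, to promote the values of $g$ from the $n$-skeleton to the full neighborhood of infinity, produces a ladder diagram between $\{H_k(N_i^{X})\}$ and $\{H_k(N_i^{Y})\}$ of the form~(\ref{basic ladder diagram}); that is, a pro-isomorphism, and it is visibly compatible with the isomorphism of ordinary homology groups just proved.

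The only genuine work is the bookkeeping in the proper case: arranging the interleaved cofinal subsequences so that the maps, the skeleton-to-space inclusions, and the proper homotopies all assemble into a single commuting ladder. This is routine but fiddly, and is carried out in exactly the spirit of the $\operatorname{pro}$-$\pi_k$ assertion of Proposition~\ref{Prop: n-equivalences preserve (n-1)-invariants}; see \cite[Props.\ 4.1.4, 17.1.1]{Ge08}. No finite-dimensionality hypothesis is needed, since every map involved is defined only on skeleta of dimension $\le n$.
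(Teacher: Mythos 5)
Your argument is correct, and it supplies a proof where the paper gives none: this proposition is stated without proof, as a companion (``in a similar vein'') to Proposition~\ref{Prop: n-equivalences preserve (n-1)-invariants}, whose own proof is an adaptation of the Whitehead theorem deferred to \cite{Ge08}. Your route is more elementary and self-contained: the two skeletal-inclusion facts ($H_{k}(Z^{(m)})\rightarrow H_{k}(Z)$ is an isomorphism for $k<m$ and an epimorphism for $k=m$) drive a direct diagram chase in the absolute case, and in the proper case the interleaved subcomplex neighborhoods of infinity together with the diagonal maps $H_{k}(N_{i}^{Y})\cong H_{k}((N_{i}^{Y})^{(n)})\rightarrow H_{k}(N_{i-1}^{X})$ (the second arrow induced by $g$) assemble into a ladder of the form (\ref{basic ladder diagram}), which is exactly the right criterion for a pro-isomorphism; the paper's implicit alternative would be to pass through the $\pi_{k}$-statement and standard references, which your version avoids at the cost of the bookkeeping you acknowledge. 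Two small repairs are needed. First, the definition of $n$-equivalence only provides a homotopy from $fg$ to the inclusion on $Y^{(n-1)}$, not on all of $Y^{(n)}$ as you assert; this is harmless because for $k\leq n-1$ an epimorphism $H_{k}(Y^{(n-1)})\rightarrow H_{k}(Y)$ suffices for the lifting in the surjectivity step, and in the ladder every class of $H_{k}((N_{i}^{Y})^{(n)})$ is the image of a class from the $(n-1)$-skeleton, so the triangles still commute --- but the appeal should be stated that way. Second, your parenthetical alternative (universal covers plus Hurewicz) is glibber than it looks, since $\pi_{k}$-isomorphisms yield $H_{k}$-isomorphisms of the base spaces only after a relative Hurewicz/mapping-cylinder argument that tracks the $\pi_{1}$-action; as you do not rely on it, this does not affect the proof, and you should also record that in the proper setting the cellular approximations of $f$ and $g$ may be taken properly homotopic to the originals, so the interleaving applies to them.
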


By combining Proposition \ref{Prop: n-equivalences preserve (n-1)-invariants}
with Theorem \ref{Theorem: West's Theorem} we can extend the notion of
[proper] $n$-equivalence to locally compact ANRs: A map $f:(X,x)\rightarrow
(Y,y)$ between pointed locally compact ANRs is an $n$\emph{-equivalence} if
$f_{\#}:\pi_{k}\left(  X,x\right)  \rightarrow\pi_{k}\left(  Y,y\right)  $ is
an isomorphism for all $k\leq n-1$. If $f$ is proper, then $f$ is a
\emph{proper }$n$\emph{-equivalence} if $f$ is an $n$-equivalence which
induces pro-isomorphisms between $\operatorname{pro}$-$\pi_{k}\left(
X,r\right)  $ and $\operatorname{pro}$-$\pi_{k}\left(  Y,f\circ r\right)  $
for each proper ray $r$ and all $k\leq n-1$.

Another well known theorem (see, for example, \cite[\S 10.1]{Ge08}) plays a
useful role in this paper. Combined with Theorem \ref{Theorem: West's Theorem}
it allows us to trade ANRs for locally finite polyhedra in most of our proofs.

\begin{proposition}
\label{Prop: proper lifts to proper}Let $f:(X,x)\rightarrow(Y,y)$ be a map
between locally compact ANRs inducing an isomorphism on fundamental groups and
$\widetilde{f}:\widetilde{X}\rightarrow\widetilde{Y}$ a lift to their
universal covers. If $f$ is a [proper] homotopy equivalence then so is
$\widetilde{f}$; similarly, if $f$ is a [proper] $n$-equivalence then so is
$\widetilde{f}$.
\end{proposition}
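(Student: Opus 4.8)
The plan is to deduce the whole statement from one non-formal fact---that a lift of a proper map inducing a $\pi_{1}$-isomorphism is again proper---after which the homotopy-theoretic content is the familiar ``lift the inverse and the homotopies'' argument, carried out equivariantly. So the first task would be to prove: if $h\colon A\to B$ is a proper map of connected locally compact ANRs with $h_{\#}\colon\pi_{1}(A)\to\pi_{1}(B)$ injective, then every lift $\widetilde h\colon\widetilde A\to\widetilde B$ to universal covers is proper. Given a compact $K\subseteq\widetilde B$, the set $C=h^{-1}(p_{B}(K))$ is compact and contains $p_{A}(\widetilde h^{-1}(K))$, so a sequence in $\widetilde h^{-1}(K)$ maps into $C$ and, after passing to a subsequence, its projections converge, hence eventually lie in one evenly covered open set with compact closure. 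Either infinitely many terms share a single sheet---and a subsequence converges---or, writing the terms as $\gamma_{n}w_{n}$ with the $w_{n}$ in a fixed sheet and the $\gamma_{n}\in\pi_{1}(A)$ distinct, equivariance of $\widetilde h$ gives $\widetilde h(\gamma_{n}w_{n})=h_{\#}(\gamma_{n})\widetilde h(w_{n})$ with the $\widetilde h(w_{n})$ in a compact set $M$; injectivity of $h_{\#}$ makes the $h_{\#}(\gamma_{n})$ distinct, yet each of them moves $M$ so as to meet $K$, contradicting proper discontinuity of the $\pi_{1}(B)$-action on $\widetilde B$. Our hypothesis makes $f_{\#}$ an isomorphism, so this applies to $f$; and whenever $f$ admits a [proper] homotopy inverse or [proper] $n$-inverse $g$, a two-sided argument shows the relevant $g_{\#}$ is an isomorphism too, so the lemma applies to $g$ and to the lifts of every homotopy in play (a homotopy carries the $\pi_{1}$-map of either of its ends).

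With the properness lemma in hand, the homotopy-equivalence statements go as follows. Fix a [proper] homotopy inverse $g$ and [proper] homotopies $gf\overset{p}{\simeq}\operatorname{id}_{X}$, $fg\overset{p}{\simeq}\operatorname{id}_{Y}$; lift $\widetilde f$ and $\widetilde g$, and lift the two homotopies so that they begin at $\widetilde g\widetilde f$ and $\widetilde f\widetilde g$. By the lemma all these lifts are proper in the proper case (and are of course still homotopies). Their terminal maps lift identity maps, hence are deck transformations $\sigma\in\pi_{1}(X)$, $\tau\in\pi_{1}(Y)$, so $\widetilde g\widetilde f\overset{p}{\simeq}\sigma$ and $\widetilde f\widetilde g\overset{p}{\simeq}\tau$; replacing $\widetilde g$ by $\sigma^{-1}\widetilde g$ and using the equivariance relation $\widetilde f\gamma=f_{\#}(\gamma)\widetilde f$ together with surjectivity of $f_{\#}$ to absorb the leftover deck transformations, one produces $\widetilde g'$ with $\widetilde g'\widetilde f\overset{p}{\simeq}\operatorname{id}_{\widetilde X}$ and $\widetilde f\widetilde g'\overset{p}{\simeq}$ a deck transformation; standard left-/right-inverse bookkeeping then makes $\widetilde g'$ a two-sided [proper] homotopy inverse of $\widetilde f$. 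The $n$-equivalence statements run on the same scheme: first use West's theorem (Theorem~\ref{Theorem: West's Theorem}), together with the homotopy-equivalence case just proved, to put $X,Y$ in locally finite polyhedral form so that a skeletal $n$-inverse $g\colon Y^{(n)}\to X$ and homotopies $gf|_{X^{(n-1)}}\simeq(X^{(n-1)}\hookrightarrow X)$, $fg\simeq(Y^{(n-1)}\hookrightarrow Y)$ are available; then lift as before, noting that $p_{Y}^{-1}(Y^{(n)})=(\widetilde Y)^{(n)}$, to exhibit $\widetilde f$ as a [proper] $n$-equivalence. (That $\widetilde f$ is an $n$-equivalence at all is immediate from Proposition~\ref{Prop: n-equivalences preserve (n-1)-invariants}: the covering projections $p_{X},p_{Y}$ are $\pi_{k}$-isomorphisms for $k\ge 2$ and $\widetilde X,\widetilde Y$ are simply connected, so $\widetilde f_{\#}$ is a $\pi_{k}$-isomorphism in the required range exactly when $f_{\#}$ is.)

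The one genuinely delicate point is the properness lemma. Because $\pi_{1}(X)$ is typically infinite, $p_{X}$ is not proper, so neighborhoods of infinity in $X$ do not pull back to neighborhoods of infinity in $\widetilde X$, and there is no direct comparison of $\operatorname{pro}$-$\pi_{k}(X)$ with $\operatorname{pro}$-$\pi_{k}(\widetilde X)$---indeed the end structure of $\widetilde X$ need bear no resemblance to that of $X$. Properness of the lift must therefore be wrung out of proper discontinuity of the deck action together with injectivity of $f_{\#}$, as above; once that is secured, the rest is formal covering-space manipulation.
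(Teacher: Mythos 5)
Your key lemma (a lift to universal covers of a proper map whose induced $\pi_1$-map is injective is again proper, proved via proper discontinuity of the deck action) is correct, and the homotopy-equivalence half of your argument --- lifting an inverse and the homotopies, observing the terminal maps are deck transformations, and absorbing them using equivariance and surjectivity of $f_{\#}$ --- is sound. (For the record, the paper supplies no proof of this proposition; it defers to \cite[\S 10.1]{Ge08}, so there is no in-paper argument to compare against.)

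The gap is in the $n$-equivalence half, where ``lift as before'' conceals two unproved steps. First, the skeletal $n$-inverse $g\colon Y^{(n)}\rightarrow X$ (and the two homotopies, which live only on $X^{(n-1)}\times[0,1]$ and $Y^{(n-1)}\times[0,1]$) must be lifted across the covers $(\widetilde Y)^{(n)}\rightarrow Y^{(n)}$, etc., and such a lift exists only if $g_{\#}$ kills $\ker\bigl(\pi_{1}(Y^{(n)})\rightarrow\pi_{1}(Y)\bigr)$. This is not automatic, because the defining homotopies constrain $g$ only on $Y^{(n-1)}$. For $n\geq2$ it can be verified: push a kernel loop into $Y^{(n-1)}$, apply $fg|_{Y^{(n-1)}}\simeq\iota$ and the injectivity of $f_{\#}$. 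For $n=1$ it can simply fail for the chosen $g$: take $Y=(S^{1}_{a}\vee S^{1}_{b})$ with a $2$-cell attached along $b$, $X=S^{1}$, $f$ the inclusion of $S^{1}_{a}$ (a $\pi_{1}$-isomorphism), and $g\colon Y^{(1)}\rightarrow X$ sending both circles by degree-one maps; $g$ is a legitimate $1$-inverse (the conditions only see $0$-skeleta), yet it does not lift to $(\widetilde Y)^{(1)}\rightarrow\widetilde X=\mathbb{R}$, since a lifted copy of $b$ is a loop mapped to a $\pi_{1}$-nontrivial loop in $X$. So $n=1$ needs either a more careful choice of $g$ or a separate (easy) argument. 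Second, even when these lifts exist, your properness lemma does not literally apply to them: $(\widetilde Y)^{(n)}$ is not the universal cover of $Y^{(n)}$, and $g$ and the homotopies are generally far from $\pi_{1}$-injective on skeleta (e.g.\ $\pi_{1}(X^{(1)})\rightarrow\pi_{1}(X)$ is rarely injective). What is needed --- and what your proof actually yields without change --- is the equivariant form: a lift of a proper map along regular covers that is equivariant with respect to an \emph{injective} homomorphism of deck groups is proper; here the deck groups are $\pi_{1}(Y)$ and $\pi_{1}(X)$ and injectivity of the induced homomorphism again comes from $f_{\#}$ being an isomorphism (and, once more, from the homotopy on $Y^{(n-1)}$, so only for $n\geq2$). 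State that version, supply the lifting-criterion check, and handle $n=1$ separately; with those additions the argument closes.
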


\begin{corollary}
\label{Corollary: Equivariant West theorem}Suppose $J$ acts as covering
transformations on a locally compact ANR $X$. Then there is a $J$-equivariant
proper homotopy equivalence $g:X\rightarrow Y$ where $Y$ is a locally finite
polyhedron on which $J$ acts by simplicial covering transformations.
\end{corollary}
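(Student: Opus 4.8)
The plan is to combine West's theorem with the lifting result already in hand. Starting from the $J$-action on $X$, first I would replace the given locally compact ANR by one equipped with a nice CW structure. The natural way is to pass to the quotient: since $J$ acts by covering transformations, $X/J$ is again a locally compact ANR, and by Theorem~\ref{Theorem: West's Theorem} there is a proper homotopy equivalence $\overline{g}\colon X/J\to Z$ to a locally finite polyhedron $Z$. The composite with the covering projection $X\to X/J$ then gives, after re-examination, a candidate downstairs that we can lift.

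More precisely, the key steps are as follows. Let $q\colon X\to X/J$ be the quotient map, a covering projection with deck group $J$. Apply West's theorem to $X/J$ to get a proper homotopy equivalence $\overline{g}\colon X/J\to Z$ with $Z$ a locally finite polyhedron. Since $\overline{g}$ is a homotopy equivalence it induces an isomorphism on $\pi_1$, so the subgroup $\overline{g}_{\#}^{-1}(\pi_1(X/J))$ — i.e., $\pi_1 Z$ itself, matched with the image of $\pi_1 X$ under $q_{\#}$ — determines a covering $p\colon Y\to Z$ corresponding to the subgroup $q_{\#}(\pi_1 X)\le \pi_1(X/J)\cong\pi_1 Z$. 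This $Y$ is a locally finite polyhedron (a covering space of a locally finite polyhedron, with the pulled-back cell structure) on which $J$ acts by simplicial covering transformations, since $J=\pi_1(X/J)/q_{\#}(\pi_1 X)$ is exactly the deck group of $p$. Now $\overline{g}$ lifts to a map $g\colon X\to Y$ of covering spaces over $\overline{g}$; by Proposition~\ref{Prop: proper lifts to proper}, because $\overline{g}$ is a proper homotopy equivalence inducing an isomorphism on $\pi_1$, the lift $g$ is also a proper homotopy equivalence. Finally, one checks that $g$ can be chosen $J$-equivariant: the lift is canonically determined once a basepoint lift is fixed, and the naturality of the lifting construction with respect to deck transformations forces $g\circ j = j\circ g$ for all $j\in J$ (any two lifts differ by a deck transformation, and comparing $g\circ j$ with $j\circ g$ on a basepoint shows they agree).

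The main obstacle is the equivariance claim, and more subtly the passage through quotients: one must be careful that $X/J$ is genuinely a locally compact ANR (this uses that $J$ acts freely and properly discontinuously by covering transformations, so the quotient is a manifold-like ANR locally modeled on $X$) and that the covering $Y\to Z$ inherits a genuine locally finite polyhedral — indeed simplicial — structure from $Z$, which is automatic for coverings of locally finite simplicial complexes after barycentric subdivision if needed. Granting these, equivariance is the delicate point: it is not that an arbitrary lift is equivariant, but that the covering-space lift of $\overline g$, being determined by its value at one point together with the requirement that it cover $\overline g$, commutes with the deck group because the deck group acts by covering transformations on both sides compatibly over the identity of $Z$'s relevant quotient. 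I would spell this out by fixing $x_0\in X$, letting $g(x_0)$ be any point in $p^{-1}(\overline g(q(x_0)))$, and observing that for each $j\in J$ both $g\circ j$ and $j\circ g$ are lifts of $\overline g\circ q = \overline g\circ q\circ j$ agreeing at $x_0$ after the obvious adjustment, hence equal by uniqueness of lifts. This yields the $J$-equivariant proper homotopy equivalence $g\colon X\to Y$ as required.
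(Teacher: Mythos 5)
Your proposal is correct and follows essentially the route the paper intends: apply West's theorem (Theorem \ref{Theorem: West's Theorem}) to the quotient $J\backslash X$, pass to the corresponding cover of the resulting locally finite polyhedron, and lift the proper homotopy equivalence as in Proposition \ref{Prop: proper lifts to proper}, with equivariance coming from uniqueness of lifts once $J$ is identified with the deck group of $Y\rightarrow Z$ via the induced $\pi_1$-isomorphism. The only cosmetic point is that Proposition \ref{Prop: proper lifts to proper} is stated for universal covers, so one should note (as your argument effectively does) that the same lifting argument applies to the corresponding intermediate covers.
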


A fundamental application of Propositions
\ref{Prop: n-equivalences preserve (n-1)-invariants} and
\ref{Prop: proper lifts to proper} is the following.

\begin{example}
Let $X$ and $Y$ be finite 2-complexes with $\pi_{1}\left(  X\right)  \cong
G\cong\pi_{1}\left(  Y\right)  $. Then there exists a $2$-equivalence
$f:\left(  X,x\right)  \rightarrow\left(  Y,y\right)  $. Since $f$ is
(trivially) proper, $\widetilde{f}:\widetilde{X}\rightarrow\widetilde{Y}$ is a
proper $2$-equivalence; so the $0$- and $1$-dimensional end invariants, such
as the number of ends and $\operatorname{pro}$-$H_{1}$ can be attributed
directly to $G$. Modulo issues related to base rays, the same is true for
$\operatorname*{pro}$-$\pi_{1}$.
\end{example}

\section{Coaxial and strongly coaxial
homeomorphisms\label{Section: Coaxial and Strongly Coaxial homeomorphisms}}

We now take a closer look at the fundamental objects of study in this
paper---coaxial and strongly coaxial homeomorphisms.

\begin{definition}
\label{Defn: Main definition}Let $j:Y\rightarrow Y$ be a homeomorphism of a
simply connected, locally compact ANR that generates a $%
\mathbb{Z}
$-action by covering transformations, and let $J=\left\langle j\right\rangle
\cong%
\mathbb{Z}
$. Then

\begin{enumerate}
\item \label{Main Definition: item 1}$j$ is \emph{coaxial} if, for every
compact set $C\subseteq Y$, there is a larger compact $D\subseteq Y$ so that
loops in $Y-J\cdot D$ contract in $Y-C$, and

\item \label{Main Definition: item 2}$j$ is \emph{strongly coaxial }if, for
every compact set $C\subseteq Y$, there is a larger compact $D\subseteq Y$ so
that loops in $Y-J\cdot D$ contract in $Y-J\cdot C$.
\end{enumerate}

\noindent Under these circumstances, call $\left(  Y,j\right)  $ a
\emph{[strongly] coaxial pair}.
\end{definition}

\begin{example}
If $Y$ (as described above) is simply connected at infinity, then every such
$j$ is coaxial.
\end{example}

\begin{example}
\label{Example: Tree x R}Let $\mathbb{T}$ be a locally finite tree,
$Y=\mathbb{T}\times%
\mathbb{R}
$, and $j:Y\rightarrow Y$ be translation by $1$ in the $%
\mathbb{R}
$-direction. Then $j$ is strongly coaxial. Indeed, for any compact $C$, we may
choose $D\supseteq C$ to be of the form $K\times\left[  -n,n\right]  $, where
$K$ is a finite subtree. Then $J\cdot D=K\times%
\mathbb{R}
$ and each component of $Y-(K\times%
\mathbb{R}
)$ is of the form $N\times%
\mathbb{R}
$, where $N$ is contractible. Every loop in $Y-(K\times%
\mathbb{R}
)$ lies in one of these components, where it contracts missing $J\cdot C$.
\end{example}

\begin{example}
Let $Y=%
\mathbb{R}
^{3}$ and $j:Y\rightarrow Y$ be translation by $1$ along the $z$-axis. Then
$j$ is coaxial but not strongly coaxial.
\end{example}

\begin{example}
The previous two examples are easily generalized. If $W$ is a simply
connected, locally compact ANR, $Y=W\times%
\mathbb{R}
$, and $j:Y\rightarrow Y$ is translation by $1$ in the $%
\mathbb{R}
$-direction, then $j$ is coaxial; $j$ is strongly coaxial if and only if $W$
is simply connected at each of its ends.
\end{example}

\begin{example}
\label{Example: Non-coaxial homeomorphisms}Let the free group $\mathbb{F}%
_{2}=\left\langle a,b\mid\right\rangle $ act in the usual way on its Cayley
graph $\mathbb{T}_{4}$ (the tree of constant valence 4) and let $G=\mathbb{F}%
_{2}\times%
\mathbb{Z}
$ act on $\mathbb{T}_{4}\times%
\mathbb{R}
$ via the diagonal action. By Example \ref{Example: Tree x R}, the generator
of $%
\mathbb{Z}
$ is coaxial. But, as a homeomorphism of $\mathbb{T}_{4}\times%
\mathbb{R}
$, $a$ is not. To see this, let $C=e\times\left\{  0\right\}  $ where
$e\subseteq\mathbb{T}_{4}$ is the edge connecting $1$ to $a$. Then%
\[
\left\langle a\right\rangle \cdot C=(a\text{-}\operatorname*{axis}%
)\times\{0\}\subseteq\mathbb{T}_{4}\times%
\mathbb{R}
\]
and no matter how large we make the compact set $D$, there will be loops near
infinity in the plane $(b$-$\operatorname*{axis})\times%
\mathbb{R}
$ lying outside $\left\langle a\right\rangle \cdot D$ which do not contract
missing $C$, since $C$ contains the origin of that plane.
\end{example}

\begin{example}
Let $BS(m,n)$ be the Baumslag-Solitar group $\left\langle a,t\mid ta^{m}%
t^{-1}=a^{n}\right\rangle $. If $K$ is the corresponding presentation
2-complex, then $\widetilde{K}\approx\mathbb{T}_{m+n}\times%
\mathbb{R}
$. Viewing $BS\left(  m,n\right)  $ as the set of covering transformations of
$\widetilde{K}$ and employing arguments like those used in Examples
\ref{Example: Tree x R} and \ref{Example: Non-coaxial homeomorphisms} one sees
that $a$ is strongly coaxial, while $t$ fails to be coaxial. (A detailed
discussion of these spaces, groups, and actions can be found in \cite{GMT18}.)
\end{example}

\begin{proposition}
\label{Prop: multiples of coaxials are coaxial}Let $j:Y\rightarrow Y$ be a
non-periodic homeomorphism and $k$ a nonzero integer, then $j$ is [strongly]
coaxial if and only if $j^{k}$ is [strongly] coaxial.
\end{proposition}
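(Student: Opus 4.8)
The plan is to reduce the whole statement to one elementary bookkeeping identity about orbits. Write $J=\langle j\rangle$ and, for $k>0$, write $J_{k}=\langle j^{k}\rangle$. The key observation is that for any compact $A\subseteq Y$ the finite union $A^{+}:=\bigcup_{i=0}^{k-1}j^{i}(A)$ is compact and satisfies $J_{k}\cdot A^{+}=J\cdot A$, because $\{km+i:m\in\mathbb{Z},\,0\le i<k\}=\mathbb{Z}$; moreover $J_{k}\cdot A\subseteq J\cdot A$ trivially. Two monotonicity remarks will also be used repeatedly: enlarging the ``outer'' compact set $D$ shrinks $Y-J\cdot D$, hence leaves fewer loops to contract; and shrinking the ``inner'' compact set $C$ enlarges $Y-C$, hence makes null-homotopies easier. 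In particular the ``larger compact $D$'' clause of Definition~\ref{Defn: Main definition} can always be arranged after the fact by replacing $D$ with $D\cup C$.

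Before the main argument I would dispatch the preliminaries. Since $\langle j^{k}\rangle=\langle j^{|k|}\rangle$, we may assume $k>0$. Since $j$ is non-periodic and $k\neq0$, the homeomorphism $j^{k}$ is non-periodic; and since $J_{k}$ is a subgroup of the properly acting group $J$, it too acts properly, so $j^{k}$ generates a proper $\mathbb{Z}$-action and Definition~\ref{Defn: Main definition} applies to it.

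For the implication ``$j$ [strongly] coaxial $\Rightarrow$ $j^{k}$ [strongly] coaxial'': given a compact $C$, apply [strong] coaxiality of $j$ to $C$ to obtain a compact $D\supseteq C$ so that loops in $Y-J\cdot D$ contract in $Y-C$ (resp.\ in $Y-J\cdot C$). Set $D^{*}=D^{+}$, so $J_{k}\cdot D^{*}=J\cdot D$ and hence $Y-J_{k}\cdot D^{*}=Y-J\cdot D$. A loop in $Y-J_{k}\cdot D^{*}$ is therefore a loop in $Y-J\cdot D$, so it contracts in $Y-C$ (resp.\ in $Y-J\cdot C$); in the strong case one finishes by noting $Y-J\cdot C\subseteq Y-J_{k}\cdot C$, so the contraction takes place a fortiori in $Y-J_{k}\cdot C$. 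Note $D^{*}=D^{+}\supseteq D\supseteq C$, so the size requirement on $D^{*}$ is automatic. For the converse ``$j^{k}$ [strongly] coaxial $\Rightarrow$ $j$ [strongly] coaxial'': here I would use $J_{k}\cdot D\subseteq J\cdot D$, i.e.\ $Y-J\cdot D\subseteq Y-J_{k}\cdot D$, so no enlargement of the outer set is needed. Given compact $C$, apply coaxiality of $j^{k}$ to $C$ (resp.\ strong coaxiality of $j^{k}$ to $C^{+}$) to get a compact $D$ so that loops in $Y-J_{k}\cdot D$ contract in $Y-C$ (resp.\ in $Y-J_{k}\cdot C^{+}=Y-J\cdot C$), and take $D^{*}=D$: a loop in $Y-J\cdot D^{*}$ lies in $Y-J_{k}\cdot D$, hence contracts as required; enlarge $D^{*}$ to contain $C$ if necessary.

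There is no substantive obstacle: the content is entirely the orbit identity $J_{k}\cdot A^{+}=J\cdot A$ together with the two monotonicity remarks. The only points deserving a word of care are the reduction to $k>0$ and the verification that $j^{k}$ meets the standing hypotheses of Definition~\ref{Defn: Main definition}, both of which are immediate.
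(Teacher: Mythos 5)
Your argument is correct and is essentially the paper's proof: the whole content is the orbit identity $J\cdot C=\langle j^{k}\rangle\cdot\bigl(\cup_{i=0}^{|k|-1}j^{i}(C)\bigr)$ plus the obvious monotonicity observations, which is exactly how the paper argues. The only cosmetic difference is that the paper also records explicitly that the same identity shows $j$ generates a proper $\mathbb{Z}$-action if and only if $j^{k}$ does (you verify one direction of this), but that point is immediate from your setup.
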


\begin{proof}
If $J=\left\langle j\right\rangle $ and $J^{\prime}=\left\langle
j^{k}\right\rangle $, notice that for any $C\subseteq Y$, $J\cdot C=J^{\prime
}\cdot C^{\prime}$, where $C^{\prime}=\cup_{i=0}^{\left\vert k\right\vert
-1}j^{i}\left(  C\right)  $. From there it is easy to see that $j$ generates a
$%
\mathbb{Z}
$-action by covering transformations if and only if $j^{k}$ does and that $J$
satisfies Condition \ref{Main Definition: item 1} of\ Definition
\ref{Defn: Main definition} (resp. Condition \ref{Main Definition: item 2} of
Definition \ref{Defn: Main definition}) if and only if $J^{\prime}$ does.
\end{proof}

A lemma of Wright motivated our definition of coaxial and provides a vast
collection of examples and applications.

\begin{proposition}
[\cite{Wr92}]Let $Y$ be a locally compact simply connected ANR and
$j:Y\rightarrow Y$ generate a $%
\mathbb{Z}
$-action by covering transformations on $Y$. If $Y$ is pro-monomorphic at
infinity, then $j$ is coaxial.\medskip
\end{proposition}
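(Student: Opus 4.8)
The plan is to verify Definition~\ref{Defn: Main definition}\eqref{Main Definition: item 1} head‑on: given a compact $C\subseteq Y$, produce a compact $D\supseteq C$ so that the inclusion $Y-J\cdot D\hookrightarrow Y-C$ is trivial on $\pi_{1}$. I would first make two harmless reductions. Replacing $C$ by its union with all bounded components of $Y-C$, I may assume $Y-C$ is connected (hence path connected, being an ANR), so it is the bottom term $N_{0}:=Y-C$ of a cofinal nested sequence of connected neighborhoods of infinity; and by Corollary~\ref{Corollary: Equivariant West theorem} I may assume $Y$ is a locally finite polyhedron on which $J$ acts simplicially, so that the point‑set manipulations with singular disks below are legitimate.

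Next I would record the two structural inputs. \emph{(i) Pro‑monomorphicity.} Since $Y$ is pro‑monomorphic at infinity, after passing to a cofinal subsequence and relabeling the nested neighborhoods $N_{0}\supseteq N_{1}\supseteq N_{2}\supseteq\cdots$ (with $N_{k}=Y-C_{k}$) one arranges the bonding maps of $\operatorname{pro}$-$\pi_{1}$ to have ``eventually stable kernels'': for all $n\le m$, $\ker\bigl(\pi_{1}(N_{m})\to\pi_{1}(N_{n})\bigr)=\ker\bigl(\pi_{1}(N_{m})\to\pi_{1}(N_{0})\bigr)$. Concretely: a loop in some $N_{m}$ that contracts in \emph{any} $N_{n}$ already contracts in $N_{0}=Y-C$. (This is the standard reformulation of the pro‑mono condition for inverse sequences of groups, obtained from the ladder diagram.) \emph{(ii) Properness and the quotient.} The $J$‑action is free and proper, so $q\colon Y\to X:=Y/J$ is a $\mathbb{Z}$‑covering, and $q$ carries compacta to compacta and (by the covering property) every compact subset of $X$ lies in $q$ of a compact subset of $Y$. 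Hence for any prescribed level $m$ one can choose a compact $D$ with $C_{1}\subseteq D$ and $J\cdot D\supseteq C_{m}$, i.e.\ $Y-J\cdot D\subseteq N_{m}$. I would fix such a $D$ with $m$ large (the needed size of $m$ being read off from the last step).

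Now take a loop $\alpha$ in $Y-J\cdot D$. Since $Y$ is simply connected $\alpha$ bounds a singular disk $\Delta$ in $Y$; $\Delta$ is compact, so by properness there is $N$ with $\Delta\cap j^{i}(D)=\varnothing$ for all $\lvert i\rvert>N$. A direct index computation then shows the translate $j^{N+1}(\Delta)$ misses $j^{i}(D)$ for every $i\le 0$; in particular it misses $D\supseteq C_{1}$, so $j^{N+1}(\alpha)=\partial\bigl(j^{N+1}(\Delta)\bigr)$ contracts in $N_{1}$ (and indeed in $N_{m}$ if $D\supseteq C_{m}$). The crux is to transfer this from the translate $j^{N+1}(\alpha)$ back to $\alpha$ itself: one needs a free homotopy, inside $Y-C$, between $\alpha$ and $j^{N+1}(\alpha)$. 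This is exactly where inputs (i) and (ii) must be combined. Since $\alpha$ and $j^{N+1}(\alpha)$ are two lifts of the single loop $q(\alpha)$ in $X-q(D)$, one pushes $\alpha$ by deck translations; each elementary step $j^{k}(\alpha)\simeq j^{k+1}(\alpha)$ can be realized inside $Y-C$ once $\lvert k\rvert$ is large (both loops then bound disks in $Y-C$ by the disk‑translation trick above, and $Y-C$ is connected), so all high translates of $\alpha$ are freely homotopic and null in $Y-C$. Feeding this nullity back down the tower $N_{m}\supseteq\cdots\supseteq N_{0}$ via the stable‑kernel property of (i) — forcing $\alpha$ itself into a deep $N_{m}$ by choosing $D$ deep, then lowering the index from $m$ to $0$ — yields that $\alpha$ contracts in $Y-C$, as required.

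The main obstacle is precisely this transfer step: a priori the filling disk $\Delta$ of $\alpha$ passes through $C$, and no single translate simultaneously controls both ends of the annulus one wants, so one cannot naively glue $\Delta$ to a translated disk. The remedy — and the only place the hypotheses are genuinely used — is to trade the ``bad level $0$'' of $\Delta$ for depth in the tower of neighborhoods of infinity, using properness to engulf $C_{m}$ inside $J\cdot D$ and to translate fillings arbitrarily far along the axis, and using pro‑monomorphicity to bring the resulting index back down to $0$. The remaining verifications — that $J\cdot D$ can be made to engulf any $C_{m}$, that the translated disks land in the asserted neighborhoods, and that the base‑point/base‑ray bookkeeping in $\operatorname{pro}$-$\pi_{1}$ (see Proposition~\ref{Prop: pro-pi semistable}) is immaterial — are routine.
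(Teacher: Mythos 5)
There is a genuine gap at the central step, the transfer from far translates of $\alpha$ back to $\alpha$ itself. Your setup is fine: translating a filling disk $\Delta$ of $\alpha$ shows that $j^{k}(\alpha)$ bounds in $Y-C$ (indeed in deep neighborhoods of infinity) for all large $\lvert k\rvert$, and your index computation for $j^{N+1}(\Delta)$ is correct. But the two mechanisms you offer for getting from this to ``$\alpha$ contracts in $Y-C$'' do not work. First, the chain of elementary free homotopies $j^{k}(\alpha)\simeq j^{k+1}(\alpha)$ inside $Y-C$ is only available when \emph{both} loops are already known to be null in $Y-C$, i.e.\ only for large $\lvert k\rvert$; it can never be continued down to $k=0$, since the step involving $\alpha=j^{0}(\alpha)$ requires exactly the conclusion you are trying to prove. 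So you only learn that the high translates are null-homotopic in $Y-C$ and homotopic to one another there, which is no information about $\alpha$. Second, the stable-kernel consequence of pro-monomorphicity runs the wrong way for your purposes: with $N_{0}=Y-C$ at the bottom, it upgrades ``$\alpha$ contracts in $N_{0}$'' to ``$\alpha$ contracts in $N_{n}$'' for deeper $n$ (the reverse implication is trivial). To ``feed nullity back down the tower'' you would need, as input, that $\alpha$ itself contracts in $N_{0}$ or in some $N_{n}$ --- but all you have established is that $j^{k}(\alpha)$ contracts in regions of the form $Y-j^{k}(\text{compact})$, i.e.\ translated complements, and for huge $k$ these are contained in no $N_{n}$; pulling back by $j^{-k}$ only converts the statement into ``$\alpha$ contracts in $Y-j^{-k}(C')$,'' again a translated set. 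Your parenthetical gloss of the kernel-stability property (``a loop in $N_{m}$ that contracts in any $N_{n}$ already contracts in $N_{0}$'') is the trivial inclusion-induced direction and carries no content, which is a symptom of the same confusion.

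This transfer is precisely where the real content of Wright's argument lies (the present paper does not reprove the proposition; it cites \cite{Wr92}): producing fillings of distant translates of $\alpha$ costs only simple connectivity and properness, whereas bringing such a filling back to the original loop, with the excluded compact set held fixed rather than translated along the axis, is what requires the pro-monomorphic hypothesis to be combined with the $\mathbb{Z}$-action in an essentially different way from the one you describe. As written, the proof establishes the easy facts (reductions, nullity of far translates, the correct statement of kernel stability after passing to a subsequence) but leaves the key implication unproved, and I do not see how to repair it within the outline you give: neither chaining free homotopies nor invoking kernel stability closes the loop, because each needs the desired conclusion (or an untranslated nullity of $\alpha$) as its starting point.
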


\begin{remark}
When $Y$ is a strongly locally finite CW complex, it is clear that a cellular
homeomorphsim $j:Y\rightarrow Y$ is [strongly] coaxial if and only if its
restriction to the 2-skeleton is [strongly] coaxial.
\end{remark}

\begin{lemma}
\label{Lemma: coaxials under equivariant p.h.e.}Suppose locally compact ANRs
$X$ and $Y$ admit $%
\mathbb{Z}
$-actions generated by homeomorphisms $j$ and $j^{\prime}$, respectively and
$f:X\rightarrow Y$ is an equivariant ($fj=j^{\prime}f$) proper homotopy
equivalence. If $\left(  X,j\right)  $ is a [strongly] coaxial pair, then so
is $\left(  Y,j^{\prime}\right)  $. In fact, the same conclusions hold if we
assume only that $f$ is a proper 2-equivalence.
\end{lemma}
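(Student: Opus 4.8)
The plan is to transport the coaxiality condition across $f$ using the fact that a proper $2$-equivalence induces pro-isomorphisms on $\operatorname{pro}$-$\pi_1$ and, crucially, preserves the relevant "contracting-loops" information at appropriate scales. First I would reduce to the polyhedral, cellular setting: by Corollary~\ref{Corollary: Equivariant West theorem} (applied to each of $X$ and $Y$) and Proposition~\ref{Prop: n-equivalences preserve (n-1)-invariants}, we may assume $X$ and $Y$ are strongly locally finite CW complexes, $j,j'$ are cellular, and by the Remark preceding the lemma it suffices to check coaxiality on the $2$-skeleta; so I can work with a proper $2$-equivalence $f$ having a proper $2$-inverse $g:Y^{(2)}\rightarrow X$ (equivariance of $g$ up to proper homotopy is not needed — only $f$ is equivariant). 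The key point is that $f$ being proper means: for every compact $C\subseteq Y$ there is a compact $A\subseteq X$ with $f(X-A)\subseteq Y-C$, and symmetrically for $g$; and $f$ being an equivariant map means $f(J\cdot A) = J'\cdot f(A)$.

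The core of the argument is the following scale-chasing. Given compact $C'\subseteq Y$, I want compact $D'\subseteq Y$ so that loops in $Y-J'\cdot D'$ contract in $Y-J'\cdot C'$ (the strongly coaxial case; the plain coaxial case drops the $J'$ on the right and is entirely parallel). Pull $C'$ back through $g$: choose compact $C\subseteq X$ so that $g$ sends loops in $Y-J'\cdot C$ to loops in $X - J\cdot C_0$ for a suitable $C_0$, and more importantly so that a loop in $X-J\cdot C'_1$ which contracts in $X - J\cdot C_1$ gets pushed forward by $f$ to a loop in $Y - J'\cdot C'$ — here I use properness of $g$ to absorb $C'$ and equivariance of $f$ to commute with $J$. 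Then apply the coaxiality of $(X,j)$ to the compact set $C_1$ to get compact $D\subseteq X$ with: loops in $X - J\cdot D$ contract in $X - J\cdot C_1$. Finally push $D$ forward: let $D' \supseteq f(D)$ be compact and large enough that $f^{-1}(Y - J'\cdot D') \subseteq X - J\cdot D$ (possible by properness of $f$ together with equivariance, so that preimages of $J'$-saturated sets are $J$-saturated up to the relevant compacta). Now a loop $\gamma$ in $Y - J'\cdot D'$: homotope it into the image of $f$ using that $f$ is a $1$-equivalence (its $2$-inverse $g$ gives $fg \simeq \mathrm{id}$ on the $1$-skeleton), so $\gamma \simeq f\circ\delta$ with $\delta$ a loop in $X - J\cdot D$; by coaxiality of $(X,j)$, $\delta$ contracts in $X - J\cdot C_1$; applying $f$ and tracking basepoints via the base ray, $\gamma \simeq f\circ\delta$ contracts in $Y - J'\cdot C'$, provided the nullhomotopy of $\delta$, after $f$, lands where we want — which is exactly what the choice of $C_1$ was arranged to guarantee. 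One must also handle the basepoint/base-ray bookkeeping that makes $\operatorname{pro}$-$\pi_1$ statements precise, but this is the standard adjustment already invoked in Proposition~\ref{Prop: n-equivalences preserve (n-1)-invariants}.

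The main obstacle I anticipate is the bookkeeping of the nested compacta and, in particular, verifying that the nullhomotopy of $f\circ\delta$ produced from the nullhomotopy of $\delta$ stays outside $J'\cdot C'$ (not just outside $C'$). This requires choosing $C_1$ not merely large enough that $f$ carries $X - J\cdot C_1$ into $Y - J'\cdot C'$, but large enough that $f$ carries any $2$-chain (nullhomotopy disk) lying in $X - J\cdot C_1$ into $Y - J'\cdot C'$ — i.e. one needs the "image-of-a-compact-neighborhood" version of properness applied to the homotopy $X\times[0,1]\to X$, then composed with $f$. The equivariance $fj=j'f$ is what lets all of these estimates be made $J$-saturated compatibly on both sides; without it the plain coaxial conclusion would still go through (since the right-hand set $C$ in Definition~\ref{Defn: Main definition}(\ref{Main Definition: item 1}) is not saturated), but the strongly coaxial conclusion genuinely uses it. Everything else is a routine application of properness of $f$ and $g$ and the fact, from Proposition~\ref{Prop: n-equivalences preserve (n-1)-invariants}, that a proper $2$-equivalence induces a pro-$\pi_1$ isomorphism along base rays, which is what allows loops and their nullhomotopies to be transported in both directions.
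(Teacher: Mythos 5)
Your overall strategy is the same as the paper's (pull a loop back via a homotopy inverse $g$, contract it in $X$ using coaxiality of $(X,j)$, push the disk forward by $f$, and bridge the loop to $fg(\gamma)$ with the homotopy $fg\simeq \operatorname{id}$), but there is a genuine gap in the way you propose to execute it: your claim that \emph{only} $f$ need be equivariant and that ``equivariance of $g$ up to proper homotopy is not needed.'' In both the coaxial and the strongly coaxial definitions the set you must stay \emph{outside of on the hypothesis side} is the saturated set $J\cdot D$; so to feed $g(\gamma)$ into the coaxiality of $(X,j)$ you need a compact $D'\subseteq Y$ with $g^{-1}(J\cdot E)\subseteq J'\cdot D'$ (equivalently, $g$ carries $Y-J'\cdot D'$ into $X-J\cdot E$). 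Properness of $g$ only controls preimages of \emph{compact} sets, not of the noncompact saturated set $J\cdot E$, so this containment does not follow from properness of a non-equivariant $g$; the same problem recurs for the homotopy $H:\operatorname{id}_Y\simeq fg$, whose tracks must avoid $C$ (resp. $J'\cdot C$) whenever they start outside a saturated neighborhood, which again is not a consequence of properness of a non-equivariant $H$. Your bookkeeping in fact controls the wrong map: the condition $f^{-1}(Y-J'\cdot D')\subseteq X-J\cdot D$ constrains $f$-preimages, whereas the loop you must control is $\delta=g(\gamma)$. Your parenthetical that the plain coaxial case would survive without equivariance is also wrong for the same reason (and taken literally, dropping equivariance of $f$ makes the statement false: on $\mathbb{T}_3\times\mathbb{R}$ the element $a$ of $BS(1,2)$ is strongly coaxial while $t$ is not coaxial, and the identity map is a proper homotopy equivalence).

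The paper's proof avoids all of this by choosing $g$ and the homotopy $H$ \emph{equivariant} (possible because a proper $\mathbb{Z}$-action here is a covering-space action: descend to the quotients, invert there, and lift), so that $D:=g^{-1}(E)$ is compact with $J'\cdot D=g^{-1}(J\cdot E)$, and $C'\supseteq C$ is chosen so that $H\left((Y-C')\times[0,1]\right)\subseteq Y-C$; for the $2$-equivalence version it first passes to locally finite polyhedra via the equivariant West theorem and runs the same argument in the $2$-skeleta. If you replace your non-equivariant $g$ by an equivariant one (justifying its existence) and make the corresponding equivariant choice of homotopy, your argument becomes essentially the paper's. One further small point: the step you flag as the main obstacle (pushing the nullhomotopy disk forward) is actually automatic once $f^{-1}(J'\cdot C')\subseteq J\cdot C_1$, since then every subset of $X-J\cdot C_1$ maps into $Y-J'\cdot C'$; the delicate part is the annulus supplied by $H$, which is exactly where the saturation/equivariance issue above must be addressed.
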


\begin{proof}
First we will prove the initial assertion for coaxial homeomorphisms; the
analog for strongly coaxial $f$ is similar. Afterwards we generalize to proper
$2$-equivalences.

Let $H:Y\times\left[  0,1\right]  \rightarrow Y$ be an equivariant proper
homotopy with $H_{0}=\operatorname*{id}_{Y}$ and $H_{1}=fg$, where
$g:Y\rightarrow X$ is an equivariant proper homotopy inverse for $f$. For an
arbitrary compact $C\subseteq Y$, choose compact $C^{\prime}\supseteq C$ such
that $H\left(  (Y-C^{\prime})\times\left[  0,1\right]  \right)  \subseteq
Y-C$. By hypothesis, there exists a compact $Z\supseteq f^{-1}\left(
C^{\prime}\right)  $ such that loops in $X-J\cdot Z$ contract missing
$f^{-1}\left(  C^{\prime}\right)  $. Let $D=g^{-1}\left(  Z\right)  $ and note
that $J^{\prime}\cdot D=g^{-1}(J\cdot Z)$. If $\alpha$ is a loop in
$Y-J^{\prime}\cdot D$ then $g(\alpha)$ is a loop in $X-J\cdot Z$, so there is
a singular disk $\delta\subseteq X-f^{-1}\left(  C^{\prime}\right)  $ bounding
$g(\alpha)$; hence, $f\left(  \delta\right)  $ is a singular disk in
$Y-C^{\prime}$ bounding $fg(\alpha)$. By the choice of $C^{\prime}$, there is
a singular annulus in $Y-C$ cobounded by $\alpha$ and $fg(\alpha)$. The union
of that annulus with $f(\delta)$ contracts $\alpha$ in $Y-C$.

When $f$ is only assumed to be proper 2-equivalence, use the initial assertion
together with Corollary \ref{Corollary: Equivariant West theorem} to switch to
the case where $X$ and $Y$ are locally finite polyhedra. In that setting the
line of reasoning used above can be applied within the 2-skeleta of $X$ and
$Y$ to obtain the desired conclusions.
\end{proof}

The next proposition tells us that, in the appropriate context, being
[strongly] coaxial is a group-theoretic property.

\begin{proposition}
\label{Prop: coaxial is a group theoretic property}If a group $G$ acts
cocompactly as covering transformations on simply connected ANRs $X$ and $Y$,
and there exists $g\in G$ such that $\left(  X,g\right)  $ is a [strongly]
coaxial pair, then $\left(  Y,g\right)  $ is a [strongly] coaxial pair.
\end{proposition}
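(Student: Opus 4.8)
The plan is to reduce the statement to Lemma~\ref{Lemma: coaxials under equivariant p.h.e.} by producing a $G$-equivariant proper map between $X$ and $Y$ that is a proper $2$-equivalence. Since $G$ acts properly, freely, and cocompactly on the simply connected ANRs $X$ and $Y$, the quotients $X/G$ and $Y/G$ are compact ANRs with fundamental group $G$ (the actions being covering-space actions because they are proper and free on a simply connected space, hence $X$ and $Y$ are both the universal cover of their quotients). So $X$ and $Y$ are universal covers of compact ANRs sharing the same fundamental group.

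First I would pass from ANRs to finite complexes: by West's theorem (Theorem~\ref{Theorem: West's Theorem}) each of $X/G$ and $Y/G$ is homotopy equivalent to a finite polyhedron, and one can arrange these to be finite $2$-complexes as far as $2$-dimensional data is concerned. More carefully, using Corollary~\ref{Corollary: Equivariant West theorem} I would replace $X$ and $Y$ by locally finite polyhedra carrying simplicial $G$-actions, equivariantly and properly homotopy equivalently; by Lemma~\ref{Lemma: coaxials under equivariant p.h.e.} this does not disturb the [strongly] coaxial hypothesis on $X$ or the conclusion sought for $Y$, so I may as well assume $X$ and $Y$ are such polyhedra with finite quotients. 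Now $X/G$ and $Y/G$ are finite complexes with isomorphic fundamental group $G$, and I would restrict attention to their $2$-skeleta (using the Remark that being [strongly] coaxial is detected on the $2$-skeleton of a strongly locally finite complex).

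Next, exactly as in the Example following Proposition~\ref{Prop: proper lifts to proper}: since $X/G$ and $Y/G$ have isomorphic $\pi_1$, there is a $2$-equivalence $f_0:(X/G)^{(2)}\to (Y/G)^{(2)}$ realizing the identity on $G$, and it is automatically proper since the complexes are finite. By Proposition~\ref{Prop: proper lifts to proper}, the lift $\widetilde{f}_0:X\to Y$ to universal covers (chosen to intertwine the two $G$-actions, which is possible because $f_0$ induces the identity on $G$) is a proper $2$-equivalence, and by construction it is $G$-equivariant. Now apply the final clause of Lemma~\ref{Lemma: coaxials under equivariant p.h.e.}: since $\widetilde{f}_0$ is an equivariant proper $2$-equivalence and $(X,g)$ is a [strongly] coaxial pair, $(Y,g)$ is a [strongly] coaxial pair. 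One then transports this conclusion back across the polyhedral replacement of $Y$ using Lemma~\ref{Lemma: coaxials under equivariant p.h.e.} once more.

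The main obstacle is bookkeeping the equivariance through all the replacements: ensuring that the West-type polyhedral substitute can be taken $G$-equivariantly (which is exactly what Corollary~\ref{Corollary: Equivariant West theorem} supplies), that the $2$-equivalence $f_0$ on quotients can be chosen to induce the \emph{identity} automorphism of $G$ (so its lift commutes with the deck actions rather than merely conjugating them), and that restricting to $2$-skeleta is harmless for the coaxial property. Each of these is handled by a result already in the excerpt, so the argument is essentially an assembly; the only genuinely delicate point is verifying that the lift of a based $2$-equivalence inducing $\mathrm{id}_G$ is genuinely $G$-equivariant, which follows from the standard correspondence between based maps inducing $\mathrm{id}$ on $\pi_1$ and equivariant maps of universal covers.
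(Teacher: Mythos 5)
Your proposal is correct and follows essentially the same route as the paper: replace the quotients by finite complexes via West's theorem (equivariantly, as in Corollary \ref{Corollary: Equivariant West theorem}), transfer the coaxial property across the resulting equivariant proper homotopy equivalences using Lemma \ref{Lemma: coaxials under equivariant p.h.e.}, pass to 2-skeleta, choose a 2-equivalence of quotients inducing the identity on $G$, and lift it to a $G$-equivariant proper 2-equivalence to which the lemma applies again. The only cosmetic difference is that the paper phrases the replacement at the level of the quotients $G\backslash X$ and $G\backslash Y$ and then lifts, rather than invoking the equivariant corollary directly.
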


\begin{proof}
Since $G\backslash X$ is a compact ANR, by \ref{Theorem: West's Theorem},
there exists a homotopy equivalence $\lambda:G\backslash X\rightarrow K$,
where $K$ is a finite CW complex; moreover, $\lambda$ lifts to a
$G$-equivariant proper homotopy equivalence $\widetilde{\lambda}%
:X\rightarrow\widetilde{K}$. Similarly, there exists a homotopy equivalence
$\mu:G\backslash Y\rightarrow L$, where $L$ is a finite CW complex; and a
corresponding $G$-equivariant proper homotopy equivalence $\widetilde{\mu
}:Y\rightarrow\widetilde{L}$.

If $\left(  X,g\right)  $ is a [strongly] coaxial pair, then by Lemma
\ref{Lemma: coaxials under equivariant p.h.e.}, so is $\left(  \widetilde{K}%
,g\right)  $ and, hence, $\left(  \widetilde{K}^{(2)},g\right)  $. Choose a
map $f:K^{(2)}\rightarrow L^{(2)}$ inducing the identity isomorphism on
fundamental groups. Then $f$ is a 2-equivalence, which lifts to a proper
$G$-equivariant $2$-equivalence $\widetilde{f}:\widetilde{K}^{(2)}%
\rightarrow\widetilde{L}^{(2)}$. Apply Lemma
\ref{Lemma: coaxials under equivariant p.h.e.} again to complete the proof.
\end{proof}

In light of Proposition \ref{Prop: coaxial is a group theoretic property},
define an element $g$ of a finitely presentable group $G$ to be
\emph{[strongly] coaxial} if for some (hence, for all) cocompact $G$-action by
covering transformations on a simply connected ANR $Y$, $g$ is a [strongly]
coaxial homeomorphism.

\begin{proposition}
\label{Prop: central elements are coaxial}For finitely presentable $G$, every
non-torsion element of the center $Z\left(  G\right)  \trianglelefteq G$ is coaxial.
\end{proposition}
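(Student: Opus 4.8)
The plan is to reduce to a concrete geometric model and then exhibit the coaxial contraction directly. Let $G$ be finitely presentable and let $g\in Z(G)$ be non-torsion. By Proposition \ref{Prop: coaxial is a group theoretic property} and the definition that follows it, it suffices to produce \emph{one} proper, free, cocompact $G$-action on a simply connected ANR $Y$ for which $g$ acts as a coaxial homeomorphism. The natural choice is $Y=\widetilde{K}$, the universal cover of a presentation $2$-complex $K$ for $G$; this is a simply connected, strongly locally finite $2$-complex on which $G$ acts by simplicial covering transformations, and $\langle g\rangle\cong\mathbb{Z}$ since $g$ is non-torsion. Because $g$ is central, the subgroup $J=\langle g\rangle$ is normal in $G$, so the $G$-action descends to an action of the quotient $\overline{G}=G/J$ on the orbit space $Y/J$, and the covering projection $Y\to Y/J$ is $J$-equivariant.

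The key point is that any compact subset of $Y$ is, up to enlargement, a \emph{fundamental domain translate} for $J$. Concretely: fix a $G$-cocompact situation and let $F\subseteq Y$ be a compact subcomplex whose $G$-translates cover $Y$; then $J\cdot F=Y$ already. Given an arbitrary compact $C\subseteq Y$, I would choose $D$ compact so large that $J\cdot D\supseteq C$ — this is possible precisely because $J\cdot(\text{anything containing }F)=Y$, and more to the point because a loop $\alpha$ lying in $Y-J\cdot D$ must avoid every $J$-translate of $D$. The main step is then to show that for suitable $D$ the space $Y-J\cdot D$ is \emph{simply connected} (indeed, for $D$ large enough it becomes empty, which is the trivial extreme, but one wants the intermediate statement that loops there are nullhomotopic in $Y-C$). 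The honest mechanism is this: since $J$ is central and acts cocompactly along the ``$\mathbb{R}$-direction'' of its own action, the quotient $Y/J$ is again a cocompact $\overline{G}$-space, and one uses the cocompactness together with simple connectivity of $Y$ to find, for each compact $C$, a compact $D$ such that the $J$-invariant neighborhood of infinity $Y-J\cdot D$ retracts (up to $\pi_1$) into $Y-C$.

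Here is the cleaner route I expect to use. Apply Corollary \ref{Corollary: Equivariant West theorem} to replace $Y$ by a locally finite polyhedron on which $G$ acts simplicially; by the Remark preceding Lemma \ref{Lemma: coaxials under equivariant p.h.e.} and Proposition \ref{Prop: multiples of coaxials are coaxial}, we may pass to the $2$-skeleton and work there. Now the $J$-action on $Y$ is a proper free $\mathbb{Z}$-action commuting with the cocompact $G$-action; choose a compact connected $D_0$ with $G\cdot D_0=Y$. For a given compact $C$, enlarge $C$ to a connected $\pi_1$-surjective (onto $\pi_1(Y)=1$, so just connected) compact subcomplex, and set $D=g^{N}C\cup g^{-N}C\cup(\text{connecting material})$ for $N$ chosen via properness so that the $J$-translates of $D$ ``fill in'' a neighborhood of $C$ from both sides. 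Then any loop $\alpha\subseteq Y-J\cdot D$ lies in one of finitely many ``slabs'' between consecutive translates $g^{k}D$ and $g^{k+1}D$; using that $Y$ is simply connected one contracts $\alpha$ in $Y$, and using that the slab is disjoint from $C$ (by the choice of $N$ and cocompactness bounding how far a contracting disk must travel) one arranges the contracting disk to miss $C$ as well. That last sentence hides the real work: bounding the ``spread'' of a nullhomotopy. The main obstacle is therefore precisely this diameter control — showing that a loop confined to a single slab of $Y-J\cdot D$ bounds a disk of bounded diameter, hence one avoiding $C$ once $D$ is far enough out. I would handle it by invoking cocompactness of the $G$-action (a loop of bounded size in a $G$-cocompact simply connected complex bounds a disk of bounded size, by a standard van Kampen / isoperimetric-in-the-$G$-orbit argument) and by noting that centrality of $g$ makes every slab $G/J$-cocompact, so ``bounded size in the slab'' is a uniform notion. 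Modulo that estimate, assembling the contracting disk avoiding $C$ and then quoting Proposition \ref{Prop: coaxial is a group theoretic property} to transfer coaxiality to an arbitrary $G$-space finishes the proof.
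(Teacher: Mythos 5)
Your reduction to the universal cover $\widetilde{K}$ of a presentation $2$-complex, and the final appeal to Proposition \ref{Prop: coaxial is a group theoretic property}, match the paper's setup; but the core of the argument has a genuine gap, and several of the surrounding geometric claims are false. First, $J\cdot F\neq Y$ for a compact $F$ with $G\cdot F=Y$ (take $G=\mathbb{Z}^{2}$ acting on $\mathbb{R}^{2}$ and $J$ one factor: $J\cdot F$ is a strip); $Y-J\cdot D$ never becomes empty unless $J$ itself acts cocompactly; and the complement $Y-J\cdot D$ does not decompose into ``slabs'' between consecutive translates $g^{k}D$ and $g^{k+1}D$ --- already for $\mathbb{R}^{3}$ with $J$ acting by vertical translation and $D$ a ball, $Y-J\cdot D$ is connected and loops in it can wind around the string $J\cdot D$, so they are confined to no slab. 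More seriously, the step you yourself flag as ``the real work''--- bounding the spread of a nullhomotopy --- cannot be carried out as proposed. Loops in $Y-J\cdot D$ carry no a priori bound on length or diameter, so the isoperimetric fact ``loops of bounded size in a cocompact simply connected complex bound disks of bounded size'' does not apply; and no bound uniform over \emph{all} loops in the complement can exist in general. Indeed, the only place centrality enters your sketch is the claim that a slab is $G/J$-cocompact, which is not correct (a slab is not invariant under any cocompact group of symmetries), so centrality plays no essential role in your argument; but an argument that does not use centrality essentially cannot succeed, since the conclusion fails for infinite-order non-central elements such as the generator $t$ of $BS(1,2)$, which the paper notes is not coaxial.

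The mechanism that actually works, and that the paper uses, exploits centrality directly: because $g\in Z(G)$, the inclusion $\widetilde{K}^{(1)}\hookrightarrow\widetilde{K}$ is \emph{equivariantly} homotopic to $\lambda_{g}|_{\widetilde{K}^{(1)}}$, by a homotopy whose tracks (lifts of a loop representing $g$) have uniformly bounded size. Iterating this homotopy moves any loop $\alpha$ lying outside $J\cdot D$ to $\lambda_{g}^{k}(\alpha)$, for every $k$, \emph{within} $\widetilde{K}-J\cdot C$, once $D$ is chosen large relative to the track bound. Then one fixes a single nullhomotopy of $\alpha$ in $\widetilde{K}$ (simple connectivity) and observes that, by properness of the $J$-action, its translate by $\lambda_{g}^{k}$ misses $C$ for $|k|$ large; concatenating the two homotopies contracts $\alpha$ in $\widetilde{K}-C$. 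This bounded-track equivariant homotopy to the translate --- not an isoperimetric estimate --- is the missing idea; without it (or a substitute making essential use of $g$ being central), your proposal does not establish coaxiality.
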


\begin{proof}
Let $K$ be a finite presentation $2$-complex and $G$ act on its universal
cover $X$ in the usual way. View elements of $G$ as covering transformations
and note that the action is proper and cocompact. Let $j\in Z\left(  G\right)
$, $J=\left\langle j\right\rangle \vartriangleleft G$, and $C\subseteq X$ a
finite subcomplex with $G\cdot C=X$.

For each $g\in G$ and $F\subseteq X$, let $\overline{g}$ denote the coset
$gJ=Jg$ and note that:

\begin{enumerate}
\item[i)] $\overline{g}\cdot F=g(J\cdot F)=J\cdot(gF)$,

\item[ii)] $%
{\textstyle\bigcup\nolimits_{_{\overline{g}\in G/J}}}
\overline{g}C=X$, and

\item[iii)] $J\cdot C\cap\overline{g}\cdot F\neq\varnothing$ if and only if
$\exists$ $h\in\overline{g}$ such that $C\cap hF\neq\varnothing$.
\end{enumerate}

By a small variation on the argument presented in \cite[Cor.1.5]%
{Gu14}\footnote{It is well-known that when $K$ is a finite $K\left(
G,1\right)  $ complex and $\omega$ is a non-trivial element of the center of
$G$, then the corresponding covering transformation $f_{\omega}$ on the
universal cover is properly homotopic to the identity map. We are using here
the fact that, even if $K$ is merely a finite connected 2-complex, the same
holds on the 1-skeleton.}, the inclusion $X^{\left(  1\right)  }%
\hookrightarrow X$ is $G$-equivariantly homotopic to $\left.  j\right\vert
_{X^{\left(  1\right)  }}:X^{\left(  1\right)  }\rightarrow X$. Let $\Gamma$
be such a homotopy. Since $C$ is a finite subcomplex, there a finite
subcomplex $E\subseteq X$ for which $\Gamma\left(  C^{\left(  1\right)
}\times\left[  0,1\right]  \right)  \subseteq E$, ie, $E$ contains every track
of the homotopy that begins in $C^{\left(  1\right)  }$. By $G$-equivariance,

\begin{enumerate}
\item[iv)] $J\cdot E$ contains $\Gamma\left(  J\cdot C^{\left(  1\right)
}\times\left[  0,1\right]  \right)  $.
\end{enumerate}

By properness of the action, there is a finite set $A\subseteq G$ so that
$C\cap\alpha E\neq\varnothing$ if and only if $\alpha\in A$. So by observation
(iii), the only cosets $\overline{g}$ for which $J\cdot C\cap\overline{g}\cdot
E\neq\varnothing$ are those with a representative in $A$. Let $D=\cup
_{\alpha\in A}\alpha C$ and note that $J\cdot D=\cup_{\alpha\in A}%
\overline{\alpha}C$. Then, by item (iii) and the definitions of $D$ and $E$,

\begin{enumerate}
\item[v)] If $\overline{g}C$ contains a point of $X-J\cdot D$, then
$\overline{g}E\subseteq X-J\cdot C$.\medskip
\end{enumerate}

\noindent\textbf{Claim.} $D$\emph{ satisfies the definition of coaxial for the
compactum }$C$\emph{.}\medskip

Let $\alpha$ be a loop in $X-J\cdot D$. By a small push, we may assume
$\alpha\subseteq X^{\left(  1\right)  }$. (In fact, we should choose $\alpha$
outside a slightly larger $J\cdot D^{\prime}$ so that, after this push,
$\alpha$ misses the current $J\cdot D$.) Since $X$ is simply connected,
$\alpha$ bounds a singular disk $\Delta$ in $X$; and by properness, there
exists $k>0$ so that $j^{k}\Delta$ misses $C$. Hence $j^{k}\alpha$ contracts
missing $C$. We will complete the proof by homotoping $\alpha$ to $j^{k}%
\alpha$ in $X-J\cdot C$. That homotopy, followed by the contraction of
$j^{k}\alpha$ along $j^{k}\Delta$ then gives the desired contraction of
$\alpha$ in $X-C$.

By item (ii), each point $x$ on the curve $\alpha$ lies in some $\overline
{g_{x}}C$ and since $x\in X-J\cdot D$, item (v) assures that $\overline{g_{x}%
}E\subseteq X-J\cdot C$. By item (iv) and $G$-equivariance of $\Gamma$, the
entire track of $x$ under $\Gamma$ lies in $\overline{g_{x}}E$ and hence
misses $J\cdot C$. So applying $\Gamma$ to $\alpha$ produces a homotopy of
$\alpha$ to $j\alpha$ in $X-J\cdot C$. By equivariance, the loop $j\alpha$
again lies in $X-J\cdot D$, so we may repeat this procedure. Continuing
inductively and concatenating, we obtain a homotopy in $X-J\cdot C$ from
$\alpha$ to $j^{k}\alpha$.
\end{proof}

\begin{corollary}
If $G$ is finitely presentable and $C\trianglelefteq G$ is infinite cyclic,
then each nontrivial element of $C$ is coaxial.
\end{corollary}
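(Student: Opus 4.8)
The goal is to deduce that every nontrivial element of an infinite cyclic normal subgroup $C \trianglelefteq G$ (with $G$ finitely presentable) is coaxial, using the immediately preceding proposition that every non-torsion central element is coaxial. The natural strategy is to pass to a finite-index subgroup of $G$ where $C$ becomes central, invoke the preceding proposition there, and then transfer back via Proposition \ref{Prop: coaxial is a group theoretic property} (which says coaxiality of a group element is independent of the choice of proper, free, cocompact action on a simply connected ANR).

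**Key steps.** First I would observe that since $C \cong \mathbb{Z}$ is normal in $G$, conjugation gives a homomorphism $G \to \operatorname{Aut}(C) \cong \mathbb{Z}/2$; let $G_0$ be its kernel, a subgroup of index at most $2$. Then $C$ is central in $G_0$: for $g \in G_0$ and the generator $c$ of $C$, we have $gcg^{-1} = c^{\pm 1}$, but membership in $G_0$ forces the sign to be $+$. Second, $G_0$ is finitely presentable (finite-index subgroups of finitely presentable groups are finitely presentable) and torsion-free in the relevant direction — more precisely, every nontrivial element of $C$ is non-torsion in $G_0$ since $C \cong \mathbb{Z}$. So by the preceding proposition applied to $G_0$, every nontrivial $c^n \in C$ is coaxial as an element of $G_0$. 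Third, I would take a simply connected ANR $Y$ with a proper, free, cocompact $G$-action (e.g. the universal cover of a presentation complex); restricting to $G_0$ gives a proper, free, cocompact $G_0$-action on the same $Y$, so ``$c^n$ is coaxial in $G_0$'' means the homeomorphism of $Y$ induced by $c^n$ is a coaxial homeomorphism. But that homeomorphism does not depend on whether we think of it as coming from $G_0$ or from $G$ — the definition of coaxial for a homeomorphism (Definition \ref{Defn: Main definition}) only involves $J = \langle c^n \rangle$ and the space $Y$, not the ambient group. Hence $c^n$ is coaxial as an element of $G$ as well, by the definition of coaxiality for group elements (which asks only for \emph{some} proper, free, cocompact action exhibiting the property).

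**Main obstacle.** The only real subtlety is bookkeeping around the definition of ``coaxial element of a group'': one must be careful that the property ``$g$ is coaxial'' as defined via an action is genuinely a property of the pair (space $Y$, homeomorphism $g$), so that the $G_0$-action and the $G$-action on the \emph{same} $Y$ give the same answer. Once that is unwound, the argument is essentially formal. A minor alternative that avoids even passing to $G_0$: note that $C \trianglelefteq G$ infinite cyclic is automatically central unless $G$ maps onto $\mathbb{Z}/2$ via the conjugation action, but in all cases the finite-index step above handles it uniformly, so I would present the finite-index argument rather than case-splitting. I do not anticipate needing any machinery beyond the preceding proposition and Proposition \ref{Prop: coaxial is a group theoretic property}.
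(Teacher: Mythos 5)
Your argument is correct, and it follows the same basic strategy as the paper: reduce to the central case by passing to a finite-index subgroup of $G$ in which $C$ is central, apply the preceding proposition there, and use that coaxiality of the induced homeomorphism on a fixed $Y$ (with the restricted, still cocompact, action) is intrinsic to the pair $(Y,\text{homeomorphism})$. The only difference is the choice of subgroup: the paper takes $S\trianglelefteq G$ generated by all squares, and since the generator $t$ of $C$ need not lie in $S$ it shows only that $t^{2}\in Z(S)$ is coaxial and then invokes Proposition \ref{Prop: multiples of coaxials are coaxial} to pass from $t^{2}$ to all nontrivial elements of $C$; you instead take the kernel $G_{0}$ of the conjugation map $G\rightarrow\operatorname{Aut}(C)\cong\mathbb{Z}/2$, which has index at most $2$ and contains $C$ in its center, so each nontrivial power of the generator is handled directly and the ``powers of coaxials'' proposition is not needed. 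Both routes are sound; yours is slightly more economical, while the paper's avoids singling out the conjugation homomorphism at the cost of one extra (already established) proposition.
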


\begin{proof}
Let $S$ be the subgroup of $G$ generated by the set of all squares. Since the
conjugate of a square is a square, $S$ is normal. Furthermore, since every
element of $G/S$ has order 2, if $\overline{g}$ and $\overline{h}$ are cosets
of $S$, then
\[
\overline{g}\overline{h}\overline{g}^{-1}\overline{h}^{-1}=\overline
{g}\overline{h}\overline{g}\overline{h}=\overline{gh}\overline{gh}=1.
\]
It follows that $G/S$ is a finite abelian group; so $S$ has finite index in
$G$.

Let $C=\left\langle t\right\rangle \trianglelefteq G$ as in the hypothesis.
Then each element of $G$ conjugates $t$ to $t$ or $t^{-1}$; so each element of
$S$ conjugates $t$ to itself. It follows that $t^{2}\in Z\left(  S\right)  $,
and since $S$ acts cocompactly as covering transformations on the same space
as $G$, Proposition \ref{Prop: central elements are coaxial} implies that
$t^{2}$ is coaxial. By Proposition
\ref{Prop: multiples of coaxials are coaxial}, all elements of $C$ are
coaxial.\medskip
\end{proof}

We close this section by returning to the standard situation where
$J=\left\langle j\right\rangle $ is infinite cyclic and $p:Y\rightarrow
J\backslash Y$ is the corresponding covering projection. For $A\subseteq
J\backslash Y$, let $\widetilde{A}=p^{-1}\left(  A\right)  $. The following
easy observations will be useful as we proceed.

\begin{lemma}
\label{Lemma: Basic Lemma 1}Given the setup of Definition
\ref{Defn: Main definition},

\begin{enumerate}
\item if $C\subseteq Y$ and $A=p\left(  C\right)  $, then $\widetilde{A}%
=J\cdot C$, and

\item if $A\subseteq J\backslash Y$ is compact, then there is a compact
$C\subseteq Y$ such that $\widetilde{A}=J\cdot C$.

\item $j$ is strongly coaxial if and only if, for every compact $A\subseteq
J\backslash Y$, there is a larger compact $B\subseteq J\backslash Y$ such that
loops in $(J\backslash Y)-B$ that lift to loops in $Y$ contract in
$(J\backslash Y)-A$.
\end{enumerate}
\end{lemma}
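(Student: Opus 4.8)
The plan is to prove the three parts of Lemma~\ref{Lemma: Basic Lemma 1} in order, since each is essentially a routine unraveling of the definitions together with basic covering-space theory.

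For part (1), the key observation is that $\widetilde{A} = p^{-1}(p(C))$ is precisely the union of all $J$-translates of points of $C$, because the fibers of the covering projection $p:Y\rightarrow J\backslash Y$ are exactly the $J$-orbits (the action is free). So $\widetilde{A} = \bigcup_{m\in\mathbb{Z}} j^m(C) = J\cdot C$. One should note in passing that $A = p(C)$ is compact (continuous image of a compact set) and that $\widetilde{A}$ is closed, so this also exhibits $J\cdot C$ as a closed, $J$-invariant set.

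For part (2), given a compact $A\subseteq J\backslash Y$, I would use local compactness of $Y$ together with properness of the $J$-action to produce the desired $C$. Concretely, cover $A$ by finitely many open sets each of which is evenly covered by $p$ and has compact closure; lift each such closure to a compact set in $Y$ (choosing one sheet over each); and let $C$ be the union of these finitely many compact lifts. Then $p(C) \supseteq A$, and replacing $C$ by $C \cap p^{-1}(A)$ (a closed subset of a compact set, hence compact) we get $p(C) = A$, whence $\widetilde{A} = J\cdot C$ by part (1). The mild technical point here is ensuring $C$ can be taken compact rather than merely closed, which is handled by the finite subcover and the compact-closure choices; properness of the action is what guarantees $J\backslash Y$ is again a reasonable (locally compact, metrizable) space so that these covering-space manipulations behave well.

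For part (3), I would translate the strongly coaxial condition of Definition~\ref{Defn: Main definition}(\ref{Main Definition: item 2}) across the covering projection using parts (1) and (2). Given compact $A\subseteq J\backslash Y$, pick compact $C\subseteq Y$ with $\widetilde{A} = J\cdot C$ (part (2)); strong coaxiality yields compact $D\supseteq C$ with loops in $Y - J\cdot D$ contracting in $Y - J\cdot C$; set $B = p(D)$, so $\widetilde{B} = J\cdot D \supseteq J\cdot C = \widetilde{A}$ and hence $B\supseteq A$. Now a loop $\beta$ in $(J\backslash Y) - B$ that lifts to a loop $\alpha$ in $Y$ lands in $Y - \widetilde{B} = Y - J\cdot D$, so $\alpha$ contracts in $Y - J\cdot C$; projecting that null-homotopy down gives a null-homotopy of $\beta$ in $(J\backslash Y) - A$. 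Conversely, given the downstairs condition and an arbitrary compact $C\subseteq Y$, apply it to $A = p(C)$ to get $B$, then choose compact $D$ with $p(D) = B$; a loop $\alpha$ in $Y - J\cdot D$ projects to a loop $\beta$ in $(J\backslash Y) - B$ which \emph{lifts to a loop} (namely $\alpha$), so $\beta$ contracts in $(J\backslash Y) - A$, and this null-homotopy lifts (the homotopy lifting property, using that $\alpha$ is a loop, i.e. a genuine lift of $\beta$) to a null-homotopy of $\alpha$ in $Y - \widetilde{A} = Y - J\cdot C$. The only place demanding genuine care — the ``main obstacle,'' such as it is — is keeping straight the lifting criterion in the statement: the restriction to loops that lift to loops is exactly what makes the homotopy-lifting argument go through in the converse direction, and one must check that the lift of the null-homotopy is again based appropriately so that it contracts $\alpha$ rather than merely freely homotoping it; this is standard but worth stating explicitly.
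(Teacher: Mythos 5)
The paper gives no proof of this lemma at all---it is introduced with the words ``the following easy observations will be useful as we proceed''---so the only comparison to make is with the standard covering-space reasoning, and your write-up supplies exactly that: parts (1) and (3) are correct as written, including the careful handling of the converse in (3), where lifting the null-homotopy (or the map of the disk) starting from the given lift $\alpha$ of $\beta$ lands it in $p^{-1}\left(  (J\backslash Y)-A\right)  =Y-J\cdot C$ and ends at a constant, since lifts of constant loops into the discrete fiber are constant.

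One small repair is needed in part (2): you cannot, in general, ``lift each such closure to a compact set in $Y$.'' The closure of an evenly covered open set need not be evenly covered, and the closure in $Y$ of a single sheet need not be compact even when the closure downstairs is compact---for instance, on the cylinder $\mathbb{R}\times S^{1}$ covered by $\mathbb{R}^{2}$, an open strip that spirals infinitely often, accumulating on a circle, is evenly covered and has compact closure downstairs, while each sheet upstairs is unbounded. The fix is immediate and does not change your strategy: for each $a\in A$ choose a compact neighborhood $W_{a}$ contained in an evenly covered open set $U_{a}$, lift $W_{a}$ homeomorphically into one sheet over $U_{a}$ (this lift is compact, being the image of a compact set under the inverse of $p$ restricted to that sheet), use compactness of $A$ to pass to finitely many $W_{a}$ whose interiors cover $A$, and then intersect the union of the compact lifts with $p^{-1}\left(  A\right)  $ exactly as you do, so that $p\left(  C\right)  =A$ and part (1) finishes the argument. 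With that adjustment (and the trivial replacement of $D$ by $D\cup C$ in part (3) to guarantee $D\supseteq C$), your proof is complete.
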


\section{Model spaces\label{Section: Model spaces}}

The main theorems of this paper will be proved by comparing spaces of
interest---simply connected, locally compact ANRs admitting $%
\mathbb{Z}
$-actions by covering transformations ---to custom-made representatives of a
class of easily understood \textquotedblleft model spaces\textquotedblright.
In this section, we construct and analyze the model spaces.

Each model evolves in three stages. First there is a \textquotedblleft model
tree\textquotedblright, which is rooted and locally finite with no leaves, and
comes equipped with a labeling of the edges by nonnegative integers (subject
to certain rules). Each model tree contains instructions for the second stage,
a \textquotedblleft model base space\textquotedblright\ which has infinite
cyclic fundamental group. The third stage, the \textquotedblleft model $%
\mathbb{Z}
$-space\textquotedblright, is the universal cover of the second stage. We now
provide details.

\subsection{Model Trees}

A \emph{model tree} is a pair $\left(  \Gamma,\mathcal{K}\right)  $ where
$\Gamma$ is locally finite leafless tree with root vertex $v_{0,1}$ and
$\mathcal{K}:\operatorname*{Edges}\left(  \Gamma\right)  \rightarrow\left\{
0,1,2,\cdots\right\}  $ is a labeling function satisfying:

\begin{itemize}
\item[(i)] If a reduced edge path in $\Gamma$, beginning at $v_{0,1}$,
contains an edge with label $0$, then each subsequent edge also has label $0$.
\end{itemize}

Edges labeled $0$ are called \emph{null edges. }Condition (i) ensures that the
subgraph $\Gamma^{+}$ consisting of $v_{0,1}$ and all non-null edges and their
vertices is a rooted subtree $\Gamma^{+}\leq\Gamma$; call it the
\emph{positive subtree}. In our diagrams, edges of $\Gamma^{+}$ are indicated
with solid lines and null edges with dashed lines. See Figure
\ref{Figure: Model tree}.%
\begin{figure}[ptb]%
\centering
\includegraphics[
height=2.0003in,
width=3in
]%
{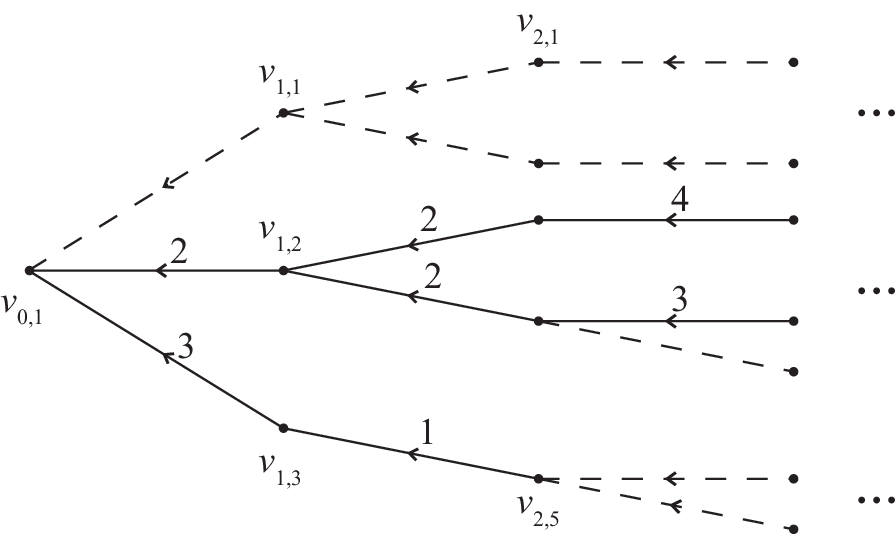}%
\caption{Example of a model tree $\Gamma$.}%
\label{Figure: Model tree}%
\end{figure}

Orient the edges of $\Gamma$ toward $v_{0,1}$ and give $\Gamma$ the path
length metric, with all edges assigned length $1$. We adopt the following
convention for denoting vertices, edges, and labels.

\begin{itemize}
\item[(ii)] A symbol $v_{i,j}$ indicates a vertex at a distance $i$ from the
root; vertices with initial index $i$ will be called the \emph{tier }$i$
\emph{vertices. }

\item[(iii)] For each $v_{i,j}$, with $i>0$, $e_{i,j}$ denotes the unique
oriented edge emanating from $v_{i,j}$ and $k_{i,j}=\mathcal{K}\left(
e_{i,j}\right)  $.
\end{itemize}

The null edges of $\Gamma$, together with their vertices, constitute a
(possibly empty) subgraph $\Omega$ of $\Gamma$ where each component contains a
unique vertex $v_{i,j}$ closest to $v_{0,1}$ in $\Gamma$. In this way,
$\Omega$ may be viewed as a rooted forest (a disjoint union of rooted
subtrees) $\left\{  \Omega_{i,j}\right\}  $, where an index $i,j$ indicates
that $v_{i,j}$ is its root. Of course, not every vertex of $\Gamma$ is the
root of a null subtree.

Two families of finite subtrees of $\Gamma$ also play a useful role: for each
integer $i\geq0$, let $\Gamma_{i}$ denote the $i$-neighborhood of $v_{0,1}$ in
$\Gamma$ and $\Gamma_{i}^{+}=\Gamma_{i}\cap\Gamma^{+}$.

\begin{remark}
The above definitions allow for the possibility $\Gamma=\left\{
v_{0,1}\right\}  $; but, except for that trivial case, $\Gamma$ must be
infinite. In fact, every edge of $\Gamma$ is contained in some infinite edge
path ray.
\end{remark}

\subsection{Model Base Spaces}

Next we describe the \emph{model base space} $X_{\Gamma}$ corresponding to a
model graph $\Gamma$; it will contain $\Gamma$ as a subcomplex.

\begin{itemize}
\item[(iv)] Attach an oriented edge $e_{0,1}^{\prime}$ to $\Gamma$ by
identifying each end to $v_{0,1}$; in a similar manner, attach an oriented
edge $e_{i,j}^{\prime}$ at each $v_{i,j}$ for which $k_{i,j}\neq0$. This
completes the 1-skeleton of $X_{\Gamma}$. For later use, let $S_{i,j}^{1}$
denote the oriented circle in $X_{\Gamma}^{\left(  1\right)  }$ that is the
image of $e_{i,j}^{\prime}$; it has natural base point $v_{i,j}$.

\item[(v)] For each $e_{i,j}$ with $k_{i,j}\neq0$, attach a 2-cell $d_{i,j}$
to $X_{\Gamma}^{\left(  1\right)  }$ as follows: beginning with $\left[
0,1\right]  \times\left[  0,1\right]  $, identify the top and bottom faces
with $e_{i,j}$, send the right face once around $e_{i,j}^{\prime}$, and the
left face $k_{i,j}$ times around $e_{i-1,j^{\prime}}^{\prime}$, where
$v_{i-1,j^{\prime}}$ is the terminal end of $e_{i,j}$. Notice that $d_{i,j}$
is the mapping cylinder of a canonical degree $k_{i,j}$ map of $S_{i,j}^{1}$
onto $S_{i-1,j^{\prime}}^{1}$. This completes the construction of $X_{\Gamma}%
$. See Figure \ref{Figure: Model base space}.
\begin{figure}[ptb]%
\centering
\includegraphics[
height=2.0003in,
width=3in
]%
{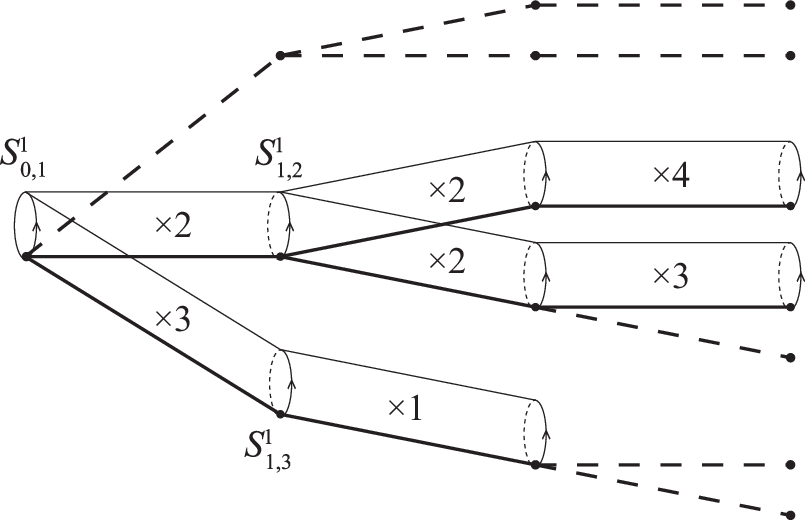}%
\caption{Model base space $X_{\Gamma}$ corresponding to Figure
\ref{Figure: Model tree}.}%
\label{Figure: Model base space}%
\end{figure}
Denote by $X_{\Gamma^{+}}$ the subcomplex made up of $\Gamma^{+}$ together
with all $e_{i,j}^{\prime}$ and all $d_{i,j}$; call $X_{\Gamma^{+}}$ the
\emph{positive subcomplex }of $X_{\Gamma}$. For each $i\geq0$, let
$X_{\Gamma_{i}}$ be the subcomplex of $X_{\Gamma}$ made up of $\Gamma_{i}$ and
all $e_{i,j}^{\prime}$ and $d_{i,j}$ attached to $\Gamma_{i}$. Define
$X_{\Gamma_{i}^{+}}$ similarly.\medskip
\end{itemize}

If we view the null edges of $\Gamma$ as mapping cylinders with singleton
domains, then $X_{\Gamma}$ is made up entirely of mapping cylinders. In fact,
if $W_{i}$ is the union of all $S_{i,j}^{1}$ and $v_{i,j}$ in the
$i^{\text{th}}$ tier, and $\omega_{i}:W_{i}\rightarrow W_{i-1}$ be the union
of maps taking the $S_{i,j}^{1}$ onto corresponding $S_{i-1,j^{\prime}}^{1}$
and $v_{i,j}$ to corresponding $v_{i-1,j^{\prime}}$, then $X_{\Gamma}$ is the
\emph{inverse mapping telescope} of the sequence
\[
S_{0,1}^{1}=W_{0}\overset{\omega_{1}}{\longleftarrow}W_{1}\overset{\omega
_{2}}{\longleftarrow}W_{2}\overset{\omega_{3}}{\longleftarrow}\cdots
\]
The natural deformation retraction of $X_{\Gamma}$ onto $S_{0,1}^{1}$, which
slides points along mapping telescope rays toward $S_{0,1}^{1}$, ends in a
retraction $\rho:X_{\Gamma}\rightarrow S_{0,1}^{1}$. See \cite{Gu13} for a
discussion of inverse mapping telescopes.

For the next stage of our construction, it will be useful to have a thorough
understanding of the point preimage $\rho^{-1}\left(  v_{0,1}\right)  $, which
consists of all mapping telescope rays, both infinite and finite, emanating
from $v_{0,1}$. (Finite mapping cylinder \textquotedblleft
rays\textquotedblright\ occur when an edge $e_{i,j}$ has label $k_{i,j}>1$ but
all edges of $\Gamma$ with terminus $v_{i,j}$ are null.) By subdividing these
rays in the obvious manner, with edges corresponding to the intersections with
individual mapping cylinders and vertices corresponding to intersections with
the $W_{i}$, $\rho^{-1}\left(  v_{0,1}\right)  $ becomes a tree $\Lambda$ with
root vertex $v_{0,1}$. This tree contains $\Gamma$, but potentially much more.
That is because each $d_{i,j}$, viewed as a mapping cylinder, contains
$k_{i,j}$ distinct cylinder lines ending at base vertex $v_{i-1,j^{\prime}}$.
Only one of those lines is an edge from $\Gamma$, but all are edges in
$\Lambda$.

We now describe $\Lambda$ as the union of inductively defined subtrees
$\Lambda_{1}\subseteq\Lambda_{2}\subseteq\cdots$.\medskip

\noindent\textbf{Step 1.} Beginning with $\Gamma_{1}$ as a building block,
expand it to $\Lambda_{1}$ as follows. Replace each $e_{1,j}$ with label
$k_{1,j}\neq0$ with a wedge of $k_{1,j}$ inwardly oriented edges having common
terminus $v_{0,1}$; color one edge from each such wedge black and the others
gray. View the black edge as the \textquotedblleft original\textquotedblright%
\ $e_{1,j}$ and its initial vertex as the original $v_{1,j}$; view the gray
edges and their initial vertices as \emph{Step 1 }\textquotedblleft
clones\textquotedblright. In addition, all null edges of $\Gamma_{1}$ are kept
as edges of $\Lambda_{1}$. As before, they are indicated by a black dashed
segment; the null edges do not get cloned. Call this finite tree, made up of
all black, gray, and dashed edges and their vertices, $\Lambda_{1}$. The black
and dashed edges form a copy of $\Gamma_{1}$ in $\Lambda_{1}$. The subtree
$\Lambda_{1}^{+}$, made up of black and gray edges and their vertices,
intersects $\Gamma_{1}$ in $\Gamma_{1}^{+}$. See Figure
\ref{Figure: Bass-Serre tree}.%
\begin{figure}[ptb]%
\centering
\includegraphics[
height=2.0003in,
width=3in
]%
{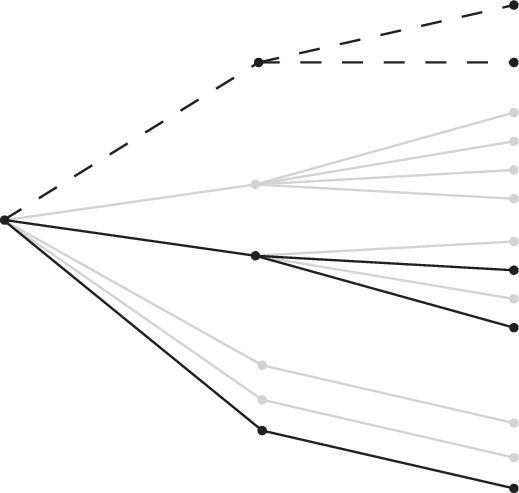}%
\caption{$\rho^{-1}\left(  v_{0,1}\right)  $ for the model base space in
Figure \ref{Figure: Model base space}.}%
\label{Figure: Bass-Serre tree}%
\end{figure}
\medskip

\noindent\textbf{Step 2.} To construct $\Lambda_{2}$, attach additional edges
and vertices to $\Lambda_{1}$ as follows. At the initial vertex $v_{1,j}$ of
each edge of $\Gamma_{1}^{+}\leq\Lambda$ (the black edges), attach a wedge of
$k_{2,j^{\prime}}$ edges for each non-null $e_{2,j^{\prime}}$ in $\Gamma_{2}$
terminating at $v_{1,j}$; color one edge from each wedge black and the others
gray. View the black edge as the original $e_{2,j^{\prime}}$ and its initial
vertex as $v_{2,j^{\prime}}$; the gray edges are Step 2 clones. In addition,
at each clone of each $v_{1,j}$ (the gray edges of $\Lambda_{1}$), place a
\textquotedblleft wedge of wedges\textquotedblright\ identical to the one just
attached at $v_{1,j}$, except that all of these edges are colored gray---they
are also Step 2 clones. Finally, at the initial vertex $v_{1,j}$ of
\emph{only} the black and dashed edges of $\Lambda_{1}$ add an incoming dashed
edge $e_{2,j^{\prime}}$ for each null $e_{2,j^{\prime}}$ in $\Gamma_{2}$
terminating at $v_{1,j}$. (As in Step 1, dashed edges do not get cloned.) Call
the resulting finite graph $\Lambda_{2}$. The black and dashed edges, and
their vertices form a copy of $\Gamma_{2}$ in $\Lambda_{2}$; meanwhile the
black and gray edges form a subtree $\Lambda_{2}^{+}$ which intersects
$\Gamma_{2}$ in $\Gamma_{2}^{+}$. Again see Figure
\ref{Figure: Bass-Serre tree}.\medskip

\noindent\textbf{Inductive steps.} Continue the above process inductively
outward to construct finite (colored trees) $\Lambda_{1}\subseteq\Lambda
_{2}\subseteq\Lambda_{3}\subseteq\cdots$ whose union is the tree $\Lambda
=\rho^{-1}\left(  v_{0,1}\right)  $, rooted at $v_{0,1}$ and containing
$\Gamma$ as a rooted subtree (the black and dashed edges). The subtree
consisting of all black and gray edges is denoted $\Lambda^{+}$; it intersects
$\Gamma$ in $\Gamma^{+}$.\medskip

\begin{remark}
\label{Remark: Bass-Serre trees}Experts will notice a similarity between the
above construction and a fundamental construction in Bass-Serre theory. At the
conclusion of this section, we will make a concrete connection between the
two.\medskip
\end{remark}

\subsection{Model $%
\mathbb{Z}
$-spaces}

We now look to understand \emph{model }$%
\mathbb{Z}
$\emph{-spaces }$\widetilde{X}_{\Gamma}$, which are the universal covers of
the $X_{\Gamma}$.

Let $q:\widetilde{X}_{\Gamma}\rightarrow X_{\Gamma}$ and $r:%
\mathbb{R}
\rightarrow S_{0,1}^{1}$ be universal covering projections, where $S_{0,1}%
^{1}$ is viewed as the quotient of $%
\mathbb{Z}
$ acting on $%
\mathbb{R}
$ by unit translations. The lift $\widetilde{\rho}:\widetilde{X}_{\Gamma
}\rightarrow%
\mathbb{R}
$ of $\rho:X\rightarrow S_{0,1}^{1}$ will play a useful role as a
\textquotedblleft height function\textquotedblright. For example, in the case
where $X_{\Gamma^{+}}\leq X_{\Gamma}$ is just $S_{0,1}^{1}$, $\widetilde{X}%
_{\Gamma}$ consists of a real line $\widetilde{S}_{0,1}^{1}$ taken
homeomorphically onto $%
\mathbb{R}
$ by $\widetilde{\rho}$, together with a copy of $\Lambda$ (in this case the
same as $\Gamma$) attached at each integer height. The general case is
similar, in that $\widetilde{X}_{\Gamma}$ is made up of $\widetilde{X}%
_{\Gamma^{+}}$ along with trees attached at integer heights; but now both
$\widetilde{X}_{\Gamma^{+}}$ and the attachment pattern for the trees are more
complicated. Since $X_{\Gamma^{+}}$ is built entirely from cylinders of
nontrivial maps between circles, we can begin to understand $\widetilde{X}%
_{\Gamma^{+}}$ by looking at the universal cover of a single mapping cylinder.

The universal cover of the mapping cylinder $\mathcal{M}_{k}$ of a degree $k$
map $S^{1}\overset{\times k}{\longleftarrow}S^{1}$ can be realized as
$\widetilde{\mathcal{M}}_{k}=\Lambda\left(  k\right)  \times%
\mathbb{R}
$, where $\Lambda\left(  k\right)  $ is a wedge of arcs with common end point
$a_{0}$ and distinct initial points $a_{1},\cdots,a_{k}$. Under the covering
projection, the preimage of the range circle is the line $\{a_{0}\}\times%
\mathbb{R}
$ and the preimage of the domain circle is $\left\{  a_{1},\cdots
,a_{k}\right\}  \times%
\mathbb{R}
$, one copy of $%
\mathbb{R}
$ for each coset of $k%
\mathbb{Z}
$ in $%
\mathbb{Z}
$. The group of covering transformations is generated by the map $\sigma
_{k}\times t$, where $\sigma_{k}:\Lambda\left(  k\right)  \rightarrow
\Lambda\left(  k\right)  $ fixes $a_{0}$ and permutes the edges cyclically,
and $t\left(  r\right)  =r+1$.

Working inductively outward from\ $S_{0,1}^{1}$, and replicating the above
construction again and again, one sees that the subcomplex $\widetilde{X}%
_{\Gamma^{+}}$ may be identified with the product $\Lambda^{+}\times%
\mathbb{R}
$, with the group of covering transformations being generated by a product map
$\sigma_{\infty}\times t$, where $\sigma_{\infty}:\Lambda^{+}\rightarrow
\Lambda^{+}$ is a homeomorphism that fixes $v_{0,1}$ and is determined by how
it permutes the ends of $\Lambda^{+}$, and $t\left(  r\right)  =r+1$.

\begin{remark}
The homeomorphism $\sigma_{\infty}:\Lambda^{+}\rightarrow\Lambda^{+}$ can be
built inductively from the various $\sigma_{k}$ described above. A more
algebraic description can be obtained from Bass-Serre theory, where
$\Lambda^{+}$ is viewed as the Bass-Serre tree corresponding to a graph of
groups interpretation of $\Gamma^{+}$ and $\sigma_{\infty}$ is the generator
of the corresponding action. See \S \ref{Section: Bass-Serre theory}.
\end{remark}

In situations where $X_{\Gamma}=X_{\Gamma^{+}}$ (an important special case),
the above provides a complete description of $\widetilde{X}_{\Gamma}$ as
$\Lambda^{+}\times%
\mathbb{R}
$ with covering transformations generated by $\sigma_{\infty}\times t$. In
general, we must account for the portions of $\widetilde{X}_{\Gamma}$ lying
over $X_{\Gamma}-X_{\Gamma^{+}}$. With respect to the height function, those
portions lie entirely at integer levels, where $\widetilde{\rho}^{-1}\left(
n\right)  $ is a copy of $\Lambda$ intersecting $\Lambda^{+}\times%
\mathbb{R}
$ in $\Lambda^{+}\times\{n\}$. At $n=0$, a copy of $\Lambda$ is glued to
$\Lambda^{+}\times%
\mathbb{R}
$ by identifying the subgraph $\Lambda^{+}$ with $\Lambda^{+}\times\{0\}$. For
general height $n$, a copy of $\Lambda$ is attached along $\Lambda^{+}%
\times\{n\}$ by identifying $x\in\Lambda^{+}$ with $\,(\sigma_{\infty}%
^{n}\left(  x\right)  ,n)$.

To obtain a generator of the group of covering transformations on
$\widetilde{X}_{\Gamma}$, we must extend $\sigma_{\infty}\times t$ over the
copies of $\Lambda$ at the integral levels. Abusing notation slightly,
$\widetilde{X}_{\Gamma}$ is the quotient of the \emph{disjoint} union
$(\Lambda^{+}\times%
\mathbb{R}
)\sqcup(\Lambda\times%
\mathbb{Z}
)$, where $\left(  x,n\right)  $ in the second summand is identified
\emph{not} with $\left(  x,n\right)  $ in the first, but rather, with
$(\sigma_{\infty}^{n}\left(  x\right)  ,n)$ in the first summand. The
generator of the covering transformations is obtained by gluing the maps
$\sigma_{\infty}\times t:\Lambda^{+}\times%
\mathbb{R}
\rightarrow\Lambda^{+}\times%
\mathbb{R}
$ and $\operatorname*{id}\times t:\Lambda\times%
\mathbb{Z}
\rightarrow\Lambda\times%
\mathbb{Z}
$.

For easy reference, we assemble the key properties of $\widetilde{X}_{\Gamma}$
in a single proposition.

\begin{proposition}
\label{Prop: Understanding the universal cover of X}Let $\Gamma$ be a model
tree, $X_{\Gamma}$ its model space, and $q:\widetilde{X}_{\Gamma}\rightarrow
X_{\Gamma}$ the universal covering projection. Then $\widetilde{X}_{\Gamma}$
is a contractible 2-complex with $1,2$ or infinitely many ends. More
specifically, the pair $\Gamma^{+}\leq\Gamma$ (together with their labelings)
determine a pair of trees $\Lambda^{+}\leq\Lambda$, also rooted at $v_{0,1}$,
with $\Gamma^{+}\leq\Lambda^{+}$ and $\Gamma\leq\Lambda$ such that:

\begin{enumerate}
\item $\widetilde{X}_{\Gamma}$ is $2$-ended if and only if $\Gamma=\Gamma
^{+}=\left\{  v_{0,1}\right\}  $ (a single vertex). In that case
$\Lambda=\Lambda^{+}=\left\{  v_{0,1}\right\}  $ and $\widetilde{X}_{\Gamma
}\approx%
\mathbb{R}
$, with the group of covering transformations generated by $t\left(  r\right)
=r+1$;

\item $\widetilde{X}_{\Gamma}$ is $1$-ended if and only if $\Gamma=\Gamma^{+}$
and the two are nontrivial (hence infinite). In that case $\Lambda=\Lambda
^{+}$ and $\widetilde{X}_{\Gamma}\approx\Lambda^{+}\times%
\mathbb{R}
$, with the corresponding group of covering transformations generated by a
product of homeomorphisms $\sigma_{\infty}\times t:\Lambda^{+}\times%
\mathbb{R}
\rightarrow\Lambda^{+}\times%
\mathbb{R}
$, where $\sigma_{\infty}$ fixes the root of $\Lambda^{+}$ and $t\left(
r\right)  =r+1$;

\item $\widetilde{X}_{\Gamma}$ is infinite-ended if and only if $\Gamma
^{+}\lneq\Gamma$. In that case $\Omega=\overline{\Gamma-\Gamma^{+}}$ is a
nonempty forest $\left\{  \Omega_{i,j}\right\}  $ of infinite rooted trees,
and $\widetilde{X}_{\Gamma}$ is homeomorphic to $\Lambda^{+}\times%
\mathbb{R}
$ together with a $%
\mathbb{Z}
$-equivariant family $\left\{  n\Omega_{i,j}\right\}  _{n\in%
\mathbb{Z}
}$ of copies of each $\Omega_{i,j}$ attached to $\Lambda^{+}\times%
\mathbb{R}
$ at their roots. More specifically, a generator of the covering
transformations on $\widetilde{X}_{\Gamma}$ restricts to $\Lambda^{+}\times%
\mathbb{R}
$ as a product of homeomorphisms $\sigma_{\infty}\times t:\Lambda^{+}\times%
\mathbb{R}
\rightarrow\Lambda^{+}\times%
\mathbb{R}
$, as described above, and $n\Omega_{i,j}$ is attached to $\Lambda^{+}\times%
\mathbb{R}
$ by identifying its root to $\left(  \sigma_{\infty}^{n}\left(
v_{i,j}\right)  ,n\right)  $. The map $\sigma_{\infty}\times t$ extends to
$\widetilde{X}_{\Gamma}$ in the obvious way.
\end{enumerate}
\end{proposition}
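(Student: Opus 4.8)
The plan is to verify the three enumerated claims by carefully unpacking the explicit construction of $\widetilde{X}_{\Gamma}$ that precedes the proposition, since essentially all the work has already been done in the surrounding text; the proof is a matter of organizing that material and checking the end-count trichotomy. First I would establish contractibility: $X_{\Gamma}$ is a $K(\mathbb{Z},1)$ (it deformation retracts onto the circle $S_{0,1}^{1}$ via $\rho$, and its fundamental group is infinite cyclic since it is an inverse mapping telescope of nontrivial maps of circles, each $d_{i,j}$ being a mapping cylinder), hence its universal cover $\widetilde{X}_{\Gamma}$ is simply connected; it is $2$-dimensional by construction, and it is contractible because a simply connected $2$-complex with trivial $H_{2}$ is contractible — and $H_{2}=0$ here since $X_{\Gamma}$ is aspherical with $\pi_{1}\cong\mathbb{Z}$, so $\widetilde{X}_{\Gamma}$ has the homology of a point.

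Next I would set up the structural description. Using the mapping-cylinder analysis: the universal cover of the mapping cylinder $\mathcal{M}_{k}$ of a degree-$k$ self-map of $S^{1}$ is $\Lambda(k)\times\mathbb{R}$ with deck group generated by $\sigma_{k}\times t$, as stated in the text. Assembling these inductively outward from $S_{0,1}^{1}$ gives that $\widetilde{X}_{\Gamma^{+}}\cong\Lambda^{+}\times\mathbb{R}$ with deck transformation $\sigma_{\infty}\times t$, where $\sigma_{\infty}$ fixes $v_{0,1}$; then one glues in, at each integer height $n$, a copy of $\Lambda$ along $\Lambda^{+}\times\{n\}$ via the identification $x\mapsto(\sigma_{\infty}^{n}(x),n)$, exactly as described in the construction, with the deck generator obtained by gluing $\sigma_{\infty}\times t$ to $\operatorname{id}\times t$. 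This is precisely the claimed description, and the forest $\Omega=\overline{\Gamma-\Gamma^{+}}$ with its rooted-tree components $\Omega_{i,j}$ accounts for exactly the part of $\Lambda$ not in $\Lambda^{+}$ (the ``dashed'' part), so $n\Omega_{i,j}$ is attached at $(\sigma_{\infty}^{n}(v_{i,j}),n)$.

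Finally I would count ends in the three cases. If $\Gamma=\Gamma^{+}=\{v_{0,1}\}$, then $X_{\Gamma}=S_{0,1}^{1}$ and $\widetilde{X}_{\Gamma}\approx\mathbb{R}$, which has $2$ ends. If $\Gamma=\Gamma^{+}$ is nontrivial, then $\Lambda=\Lambda^{+}$ is an infinite locally finite tree with no leaves (each edge of $\Gamma$, hence of $\Lambda$, lies on an infinite ray, by the Remark after the model-tree definition), and $\widetilde{X}_{\Gamma}\approx\Lambda^{+}\times\mathbb{R}$; the product of a one-ended-or-more space with $\mathbb{R}$ is one-ended, so $\widetilde{X}_{\Gamma}$ is $1$-ended. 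If $\Gamma^{+}\lneq\Gamma$, then $\Omega$ is nonempty; I would argue there are infinitely many ends by exhibiting, for each null subtree $\Omega_{i,j}$ and each $n\in\mathbb{Z}$, a distinct end running out through $n\Omega_{i,j}$, and observing these are separated by arbitrarily large compacta — the $\mathbb{Z}$-translates alone give infinitely many ends, so the count is $\infty$. I expect the main obstacle to be the bookkeeping that the attaching data $(\sigma_{\infty}^{n}(v_{i,j}),n)$ is consistent and $\mathbb{Z}$-equivariant, i.e. that extending $\sigma_{\infty}\times t$ over the attached copies of $\Lambda$ really does produce a well-defined generator of the deck group acting freely and properly — this requires checking that the two gluing recipes agree on $\Lambda^{+}\times\{n\}$, which is a direct but slightly fiddly verification from the definitions of $\sigma_{\infty}$ and the identifications in Steps 1, 2, and the inductive step.
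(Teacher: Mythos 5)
Your proposal is correct and follows essentially the same route as the paper, which offers no separate proof: the proposition is explicitly presented as a summary of the mapping-cylinder/telescope construction of $\widetilde{X}_{\Gamma}$ given just before it. Your additional details (contractibility via asphericity of $X_{\Gamma}\simeq S_{0,1}^{1}$, the identification of the dashed part of $\Lambda$ with the null forest $\Omega$, and the end counts in the three cases) are accurate fillings-in of what the paper leaves implicit.
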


\begin{remark}
\label{Remark. Subcases of infinite-ended Y}Case 3) of Proposition
\ref{Prop: Understanding the universal cover of X} can be split into subcases
resembling the 2- and 1-ended situations, respectively. \smallskip

\noindent Subcase a). When $\Gamma^{+}$ is finite, so is $\Lambda^{+}$, so a
collapse of $\Lambda^{+}$ onto its root vertex induces an equivariant proper
homotopy equivalence $f:\Gamma^{+}\times%
\mathbb{R}
\rightarrow%
\mathbb{R}
.$ If, at each integer $n$, we attach to $%
\mathbb{R}
$ copies of the trees $n\Omega_{i,j}\subseteq\widetilde{X}_{\Gamma}$; then $f$
extends to an equivariant proper homotopy equivalence between $\widetilde{X}%
_{\Gamma}$ and the resulting locally finite graph comprised of $%
\mathbb{R}
$ with trees attached at the integers.\smallskip

\noindent Subcase b).When $\Gamma^{+}$ is infinite, there is no obvious
simplification of $\widetilde{X}_{\Gamma}$, but an analogy with the 1-ended
case remains. In particular $\widetilde{X}_{\Gamma}$ contains a large
equivariant subcomplex identical to the 1-ended case, with the remainder of
$\widetilde{X}_{\Gamma}$ consisting of a discrete collection of
trees.\smallskip

\noindent Under either of the two subcases, $\widetilde{X}_{\Gamma}$ has
countably many ends, unless $\left\{  \Omega_{i,j}\right\}  $ contains a tree
with uncountably many ends. \medskip
\end{remark}

The usefulness of model spaces $X_{\Gamma}$ and $\widetilde{X}_{\Gamma}$ lies
in the simplicity of their topology at infinity. Of particular interest here
is their homotopy and homology data in dimensions 0 and 1.

\begin{proposition}
\label{Proposition: End properties of the model space}Let $\Gamma$ be a model
tree and $X_{\Gamma}$ the corresponding model space. Then the inclusion map
$\Gamma\hookrightarrow X_{\Gamma}$ is a proper 1-equivalence, thereby inducing
a bijection between ends. If $r$ is an edge path ray in $\Gamma$ beginning at
$v_{0,1}$, then $\operatorname*{pro}$-$\pi_{1}\left(  X_{\Gamma},r\right)  $
can be represented by the inverse sequence%
\[%
\mathbb{Z}
\overset{\times k_{1,j_{1}}}{\longleftarrow}%
\mathbb{Z}
\overset{\times k_{2,j_{2}}}{\longleftarrow}%
\mathbb{Z}
\overset{\times k_{2,j_{3}}}{\longleftarrow}\cdots
\]
where the $k_{i,j_{i}}$ are the labels on the edges that comprise $r$.\medskip
\end{proposition}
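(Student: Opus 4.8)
The plan is to prove the two claims of Proposition~\ref{Proposition: End properties of the model space} separately, building on the explicit cylinder-by-cylinder structure of $X_{\Gamma}$ described above. For the first claim, that $\Gamma \hookrightarrow X_{\Gamma}$ is a proper $1$-equivalence, recall that $X_{\Gamma}$ is the inverse mapping telescope of the sequence $W_0 \overset{\omega_1}{\longleftarrow} W_1 \overset{\omega_2}{\longleftarrow} \cdots$, where each $W_i$ is a disjoint union of circles (and isolated vertices), and each mapping cylinder $d_{i,j}$ deformation retracts onto its base end. The point is that each $W_i$, being a disjoint union of circles and points, is homotopy equivalent to its $0$-skeleton only up to $\pi_0$, but the relevant feature for a $1$-equivalence is $\pi_0$ agreement together with surjectivity on $\pi_1$ locally. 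First I would construct the $1$-inverse $g: X_{\Gamma}^{(1)} \to \Gamma$ by collapsing, within each cylinder $d_{i,j}$, the circle $S_{i,j}^1$ (i.e.\ the attached edge $e'_{i,j}$) to the point $v_{i,j}$, and collapsing each cylinder line to the corresponding edge of $\Gamma$; this is well-defined because $\Gamma \subseteq X_{\Gamma}$ picks out a single cylinder line in each $d_{i,j}$. One then checks that $g \circ (\Gamma \hookrightarrow X_{\Gamma})$ is (properly) homotopic to $\mathrm{id}_{\Gamma}$ on the $0$-skeleton of $\Gamma$, and that the composite $(\Gamma \hookrightarrow X_{\Gamma}) \circ g$ is properly homotopic to $X_{\Gamma}^{(1)} \hookrightarrow X_{\Gamma}$ using the mapping-telescope deformation retraction $\rho$ restricted to appropriate pieces; properness is automatic because the homotopies move points only within individual cylinders, and the cylinders form a locally finite cover. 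Since a map of CW complexes is a $1$-equivalence iff it induces a bijection on $\pi_0$ (by Proposition~\ref{Prop: n-equivalences preserve (n-1)-invariants} with $n=1$), and properness upgrades this to a proper $1$-equivalence hence a bijection on $\mathcal{E}\mathrm{nds}$, the first claim follows.

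For the second claim, I would compute $\operatorname{pro}$-$\pi_1(X_{\Gamma}, r)$ directly from a cofinal sequence of neighborhoods of infinity determined by $r$. Given the edge-path ray $r$ with consecutive edges $e_{1,j_1}, e_{2,j_2}, \ldots$ (carrying labels $k_{1,j_1}, k_{2,j_2}, \ldots$), set $p_i = v_{i,j_i}$ and let $N_i$ be the neighborhood of infinity obtained by deleting $X_{\Gamma_{i}}$ (the finite subcomplex of cylinders attached to the $i$-neighborhood $\Gamma_i$). The component of $N_i$ containing $p_i$ deformation retracts — again via the telescope retraction and the cylinder structure — onto the circle $S_{i,j_i}^1$, so $\pi_1$ of that component, based at $p_i = v_{i,j_i}$, is $\mathbb{Z}$ generated by $[S_{i,j_i}^1]$. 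The bonding map $\pi_1(N_{i+1,\text{comp}}, p_{i+1}) \to \pi_1(N_{i,\text{comp}}, p_i)$ is induced by inclusion followed by the change-of-basepoint along $r|_{[x_i,x_{i+1}]}$; but the inclusion of $S_{i+1,j_{i+1}}^1$ into the cylinder $d_{i+1,j_{i+1}}$ is exactly the canonical degree $k_{i+1,j_{i+1}}$ map onto $S_{i,j_{i+1}'}^1 = S_{i,j_i}^1$ (since $e_{i+1,j_{i+1}}$ terminates at $v_{i,j_i}$ along $r$), by construction step (v). Hence in terms of the chosen generators the bond is multiplication by $k_{i+1,j_{i+1}}$, giving exactly the asserted inverse sequence $\mathbb{Z} \overset{\times k_{1,j_1}}{\longleftarrow} \mathbb{Z} \overset{\times k_{2,j_2}}{\longleftarrow} \cdots$.

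The main obstacle I anticipate is bookkeeping rather than conceptual: one must be careful that the $N_i$ really are efficient neighborhoods of infinity (their components are unbounded) and that the deformation retraction of the relevant component of $N_i$ onto $S_{i,j_i}^1$ is valid — in particular that no "null'' edges or side branches of $\Gamma$ attached near $p_i$ obstruct it. This is handled by noting that null subtrees and other branches contribute separate components or else retract harmlessly into $S_{i,j_i}^1$ through their cylinders, so the based component is as claimed; and one should verify the basepoint tracking along $r$ matches the canonical degree-$k$ map rather than its inverse or a conjugate, which is where a sign/orientation check is needed (the edges are oriented toward $v_{0,1}$, consistent with the cylinder convention in (v)). A secondary point is that, strictly, $X_\Gamma$ need not be finite-dimensional if we later generalize, but here $X_\Gamma$ is a $2$-complex so the skeletal arguments behind Proposition~\ref{Prop: n-equivalences preserve (n-1)-invariants} apply without issue. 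Once these routine checks are in place, both conclusions follow, and the proper $1$-equivalence guarantees the end bijection claimed in the statement.
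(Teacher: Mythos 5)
The paper itself offers no written proof of this proposition---it is treated as immediate from the inverse-mapping-telescope structure of $X_{\Gamma}$ (with \cite{Gu13} as the background reference)---and your argument is exactly the intended one. In particular your $\operatorname{pro}$-$\pi_{1}$ computation is correct: the component of $\overline{X_{\Gamma}-X_{\Gamma_{i}}}$ containing $v_{i,j_{i}}$ is itself an inverse mapping telescope whose base is $S_{i,j_{i}}^{1}$ (or just the vertex $v_{i,j_{i}}$ when $k_{i,j_{i}}=0$, in which case the group is trivial; this still represents the displayed sequence up to pro-isomorphism since all subsequent labels are then $0$), it deformation retracts onto that base, and under these retractions the bond is the degree-$k_{i+1,j_{i+1}}$ cylinder collapse of $S_{i+1,j_{i+1}}^{1}$ onto $S_{i,j_{i}}^{1}$; basepoint/conjugation issues are moot because the groups are abelian.

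One step in your first part is false as stated, though harmless. With $g:X_{\Gamma}^{(1)}\rightarrow\Gamma$ the map that is the identity on $\Gamma$ and collapses each loop $e_{i,j}^{\prime}$ to $v_{i,j}$, the composite $\iota g$ (where $\iota:\Gamma\hookrightarrow X_{\Gamma}$) cannot be homotopic, properly or otherwise, to $X_{\Gamma}^{(1)}\hookrightarrow X_{\Gamma}$: it factors through the simply connected tree $\Gamma$ and so kills every loop $S_{i,j}^{1}$, whereas each such loop (which exists only when $k_{i,j}\neq0$, forcing all labels back to the root to be nonzero) represents a nonzero multiple of the generator of $\pi_{1}\left(  X_{\Gamma}\right)  \cong\mathbb{Z}$. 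No map into $\Gamma$ could satisfy the homotopy you claim. Fortunately the definition of a [proper] $1$-equivalence only requires $\left.  \iota g\right\vert _{X_{\Gamma}^{(0)}}$ to be [properly] homotopic to $X_{\Gamma}^{(0)}\hookrightarrow X_{\Gamma}$, and on the $0$-skeleton $\iota g$ is literally the identity, just as $g\iota=\operatorname{id}_{\Gamma}$; so your $g$ does exhibit a proper $1$-equivalence, only not for the reason you give. Relatedly, the remark that ``properness upgrades this to a proper $1$-equivalence hence a bijection on ends'' runs Proposition \ref{Prop: n-equivalences preserve (n-1)-invariants} backwards: its proper version takes the pro-$\pi_{0}$ pro-isomorphism (essentially the end bijection) as a hypothesis, not a conclusion. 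The direct construction---a proper $g$ together with the (trivial) homotopies on $0$-skeleta---is what yields the proper $1$-equivalence, and the bijection between ends then follows from it.
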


Of greater interest is the end behavior of the model $%
\mathbb{Z}
$-spaces.\medskip

\begin{proposition}
\label{Prop: Model Z-space}Let $\Gamma$ be a model tree, $X_{\Gamma}$ and
$\widetilde{X}_{\Gamma}$ the corresponding model $%
\mathbb{Z}
$-space. As noted in Proposition
\ref{Prop: Understanding the universal cover of X}, $\widetilde{X}_{\Gamma}$
is 1-, 2-, or infinite-ended. Moreover,

\begin{enumerate}
\item \label{Prop: Model Z-space case 1}If $\widetilde{X}_{\Gamma}$ is
2-ended, both ends are simply connected and the $%
\mathbb{Z}
$-action fixes those ends;

\item \label{Prop: Model Z-space case 2}If $\widetilde{X}_{\Gamma}$ is
1-ended, that end is semistable and $\operatorname*{pro}$-$\pi_{1}\left(
\widetilde{X}_{\Gamma},r\right)  $ can be represented by an inverse sequence
of surjections between finitely generated free groups%
\[
F_{1}\twoheadleftarrow F_{2}\twoheadleftarrow F_{3}\twoheadleftarrow\cdots
\]
and $\operatorname*{pro}$-$H_{1}\left(  \widetilde{X}_{\Gamma};%
\mathbb{Z}
\right)  $ can be represented by an inverse sequence of surjections between
finitely generated free abelian groups%
\[%
\mathbb{Z}
^{n_{1}}\twoheadleftarrow%
\mathbb{Z}
^{n_{2}}\twoheadleftarrow%
\mathbb{Z}
^{n_{3}}\twoheadleftarrow\cdots.
\]

\item \label{Prop: Model Z-space case 3}If $\widetilde{X}_{\Gamma}$ is
infinite-ended, the $%
\mathbb{Z}
$-action fixes precisely one or two ends with the others having trivial
stabilizers. All non-fixed ends are simply-connected. If two ends are fixed,
those ends are simply connected as well. If just one end is fixed, that end is
semistable with $\operatorname*{pro}$-$\pi_{1}\left(  \widetilde{X}_{\Gamma
},r\right)  $ representable by an inverse sequence like the one described in
Assertion (\ref{Prop: Model Z-space case 2}). Similarly, $\operatorname*{pro}%
$-$H_{1}\left(  \widetilde{X}_{\Gamma};%
\mathbb{Z}
\right)  $ is representable by a sequence like the one found in Assertion
(\ref{Prop: Model Z-space case 2}), with all nontrivial contributions coming
from the fixed end.
\end{enumerate}
\end{proposition}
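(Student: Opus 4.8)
The plan is to read off all three assertions from the explicit model for $\widetilde{X}_{\Gamma}$ supplied by Proposition~\ref{Prop: Understanding the universal cover of X} and its Remark~\ref{Remark. Subcases of infinite-ended Y}. The two-ended case is immediate: $\widetilde{X}_{\Gamma}\approx\mathbb{R}$ with covering transformation $t\mapsto t+1$, whose two ends have the contractible neighborhoods $[n,\infty)$ and $(-\infty,-n]$ and are each carried to themselves (not swapped) by the translation. The substance lies in the one-ended case; the infinite-ended case will then be assembled from the one- and two-ended pictures together with some bookkeeping about the attached null trees.

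For the one-ended case, use $\widetilde{X}_{\Gamma}\approx\Lambda^{+}\times\mathbb{R}$ with covering transformation $\sigma_{\infty}\times t$, where $\Lambda^{+}$ is an infinite, locally finite, leafless rooted tree (leafless because $\Gamma=\Gamma^{+}$ is leafless and cloning preserves this). Let $\Lambda_{i}^{+}$ be the $i$-ball about the root, $K_{i}=\Lambda_{i}^{+}\times[-i,i]$, and $N_{i}=\widetilde{X}_{\Gamma}-K_{i}$ (replaced, as in \secref{topology}, by efficient neighborhoods of infinity after absorbing any bounded components); by Proposition~\ref{Prop: Understanding the universal cover of X}, $\widetilde{X}_{\Gamma}$ is one-ended, so each $N_{i}$ is connected and the $N_{i}$ are cofinal. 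I would compute the homotopy type of $N_{i}$ by a nested deformation retraction: push the region where $|t|\ge i$ onto the slices $\Lambda^{+}\times\{i\}$ and $\Lambda^{+}\times\{-i\}$; then retract each exterior product (a component of $\Lambda^{+}-\Lambda_{i}^{+}$ times $[-i,i]$) onto the vertical arc over its base vertex; and finally collapse the two contractible copies of $\Lambda_{i}^{+}$. The result is a finite graph with two vertices joined by one edge for each tier-$i$ vertex of $\Lambda^{+}$, so $\pi_{1}(N_{i})$ is free of rank $m_{i}-1$, where $m_{i}$ is the number of tier-$i$ vertices.

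I would then identify the bonding map $\pi_{1}(N_{i+1})\to\pi_{1}(N_{i})$ induced by $N_{i+1}\hookrightarrow N_{i}$. Tracking a loop that represents a standard free generator through the inclusion and the two deformation retractions, one sees that the vertical arc over a tier-$(i+1)$ vertex $w$ is carried to the vertical arc over the parent of $w$ (its predecessor on the geodesic to the root). Choosing the omitted spanning-tree edges compatibly from level to level, the bonding map on the chosen generators becomes ``pass to the parent''; leaflessness of $\Lambda^{+}$ guarantees that every tier-$i$ vertex has a child, so this map is onto. Hence $\operatorname{pro}$-$\pi_{1}(\widetilde{X}_{\Gamma},r)$ is pro-represented by an inverse sequence of surjections of finitely generated free groups, which is in particular semistable and so, by Proposition~\ref{Prop: pro-pi semistable}, independent of the base ray. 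Abelianizing gives the stated presentation of $\operatorname{pro}$-$H_{1}$ by surjections of the groups $\mathbb{Z}^{m_{i}-1}$.

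For the infinite-ended case I would split along Remark~\ref{Remark. Subcases of infinite-ended Y}. When $\Gamma^{+}$ is finite, the equivariant proper homotopy equivalence of that remark replaces $\widetilde{X}_{\Gamma}$ by $\mathbb{R}$ with a $\mathbb{Z}$-family $\{n\Omega_{i,j}\}$ of infinite null trees wedged on at the integers; its ends are the two translation-fixed ends $\varepsilon_{\pm}$, which have contractible (tree-like) neighborhoods, together with one end for each end of each $\Omega_{i,j}$, and these latter are permuted freely by $n\mapsto n+1$, hence have trivial stabilizer and subtree neighborhoods, so are simply connected. When $\Gamma^{+}$ is infinite, $\widetilde{X}_{\Gamma}$ is $\Lambda^{+}\times\mathbb{R}$ (one-ended, with unique end $\varepsilon_{0}$ fixed by $\sigma_{\infty}\times t$) with the same family of infinite trees wedged on; since wedging a contractible tree at a point changes neither the homotopy type of the neighborhoods of $\varepsilon_{0}$ nor the bonding maps, the one-ended computation applies verbatim to $\varepsilon_{0}$, while the remaining ends behave as in the finite-$\Gamma^{+}$ subcase. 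Reading this off gives everything claimed: precisely one or two ends are fixed (two iff $\Gamma^{+}$ is finite), all other ends have trivial stabilizer and are simply connected, the two fixed ends of the finite subcase are simply connected, and the unique fixed end of the infinite subcase is semistable with $\operatorname{pro}$-$\pi_{1}$ and $\operatorname{pro}$-$H_{1}$ as in the one-ended case, with $\operatorname{pro}$-$H_{1}(\widetilde{X}_{\Gamma};\mathbb{Z})$ receiving no contribution from the simply connected ends. The main obstacle I anticipate is making the one-ended computation rigorous --- carrying out the nested deformation retractions carefully enough to identify $N_{i}$ with the asserted graph (minding the $t=\pm i$ slices and bounded components) and, above all, tracking the bonding maps precisely enough to be sure they are onto, which is exactly where leaflessness of $\Lambda^{+}$ is used; the infinite-ended case is then largely organizational.
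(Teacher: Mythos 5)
Your proposal follows essentially the same route as the paper: identify $\widetilde{X}_{\Gamma}\approx\Lambda^{+}\times\mathbb{R}$ in the 1-ended case, take the neighborhoods of infinity $N_{i}$ complementary to $\Lambda_{i}^{+}\times(-i,i)$, deformation retract them onto a space homotopy equivalent to the suspension of the tier-$i$ vertex set of $\Lambda^{+}$ (your two-vertex graph is exactly that suspension), identify the bonds with the parent/collapse map, and use leaflessness of $\Lambda^{+}$ for surjectivity, then handle the infinite-ended fixed end by noting the wedged-on trees do not change the homotopy type of the relevant neighborhoods. This matches the paper's proof in both structure and detail, so it is correct as an outline of the same argument.
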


\begin{proof}
The only assertions not immediate from Proposition
\ref{Prop: Understanding the universal cover of X} are the representations of
$\operatorname*{pro}$-$\pi_{1}\left(  \widetilde{X}_{\Gamma},r\right)  $ and
$\operatorname*{pro}$-$H_{1}\left(  \widetilde{X}_{\Gamma};%
\mathbb{Z}
\right)  $. Let us first address the 1-ended case where, by Proposition
\ref{Prop: Understanding the universal cover of X}, $\widetilde{X}_{\Gamma}$
may be identified with $\Lambda^{+}\times%
\mathbb{R}
$, with $\Lambda^{+}$ an infinite leafless tree rooted at $v_{0,1}$. Let $r=$
$v_{0,1}\times\lbrack0,\infty)$ be the base ray, and $N_{1}\supseteq
N_{2}\supseteq\cdots$ the cofinal sequence of neighborhoods of infinity, where
$N_{i}=\Lambda^{+}\times%
\mathbb{R}
-[\mathring{\Lambda}_{i}^{+}\times\left(  -i,i\right)  ]$. Here $\mathring
{\Lambda}_{i}^{+}$ is the open $i$-ball in $\Lambda_{i}$ centered at $v_{0,1}%
$. It is easy to see that $N_{i}$ deformation retracts onto its frontier in
$\Lambda^{+}\times%
\mathbb{R}
$,
\[
\operatorname*{Fr}\nolimits_{\Lambda^{+}\times%
\mathbb{R}
}N_{i}=\Lambda_{i}^{+}\times\left\{  -i,i\right\}  \cup(\operatorname*{Fr}%
\nolimits_{\Lambda^{+}}\Lambda_{i}^{+}\times\left[  -i,i\right]  )
\]
where $\operatorname*{Fr}_{\Lambda^{+}}\Lambda_{i}^{+}$ is the set of vertices
in $\Lambda^{+}$ at a distance $i$ from $v_{0,1}$. By squeezing $\Lambda
_{i}^{+}\times\left\{  -i\right\}  $ and $\Lambda_{i}^{+}\times\left\{
i\right\}  $ to points, $\operatorname*{Fr}\nolimits_{\Lambda^{+}\times%
\mathbb{R}
}N_{i}$ is seen to be homotopy equivalent to the suspension of
$\operatorname*{Fr}_{\Lambda^{+}}\Lambda_{i}^{+}$, a space whose fundamental
group is free of rank $\left\vert \operatorname*{Fr}_{\Lambda^{+}}\Lambda
_{i}^{+}\right\vert -1$; call that group $F_{i}$. To complete Assertion
(\ref{Prop: Model Z-space case 2}), it remains to show that bonding maps
$F_{i}\longleftarrow F_{i+1}$ are surjective. Since $\Lambda^{+}$ has no
leaves, the collapse of $\Lambda_{i+1}^{+}$ onto $\Lambda_{i}^{+}$ restricts
to a surjection of $\operatorname*{Fr}\nolimits_{\Lambda^{+}}\Lambda_{i+1}%
^{+}$ onto $\operatorname*{Fr}\nolimits_{\Lambda^{+}}\Lambda_{i}^{+}$, which
can be suspended to get a map making the following diagram commute up to
homotopy.
\[%
\begin{tabular}
[c]{ccc}%
$N_{i}$ & $\hookleftarrow$ & $N_{i}$\\
$\uparrow\simeq$ &  & $\uparrow\simeq$\\
$\operatorname*{susp}\left(  \operatorname*{Fr}_{\Lambda^{+}}\Lambda_{i}%
^{+}\right)  $ & $\overset{s_{i+1}}{\longleftarrow}$ & $\operatorname*{susp}%
\left(  \operatorname*{Fr}_{\Lambda^{+}}\Lambda_{i+1}^{+}\right)  $%
\end{tabular}
\ \ \ \ \
\]
Surjectivity of the induced maps on fundamental groups is now clear.

To obtain an equivalent representation of $\operatorname*{pro}$-$\pi
_{1}\left(  \widetilde{X}_{\Gamma},r\right)  $ in the infinite-ended case with
a single fixed end, note that the fixed end can be represented by a sequence
$M_{1}\supseteq M_{2}\supseteq\cdots$ of components of neighborhoods of
infinity where each $M_{i}$ is homeomorphic to an $N_{i}$ from the previous
case, with a countable collection of locally finite trees attached at a
discrete collection of points. Since $M_{i}$ deformation retracts onto $N_{i}%
$, the above calculations are still valid.

The proposed representations of $\operatorname*{pro}$-$H_{1}\left(
\widetilde{X}_{\Gamma};%
\mathbb{Z}
\right)  $ follow easily.
\end{proof}

\begin{remark}
If desired, more detail on the representations of $\operatorname*{pro}$%
-$\pi_{1}\left(  \widetilde{X}_{\Gamma},r\right)  $ and $\operatorname*{pro}%
$-$H_{1}\left(  \widetilde{X}_{\Gamma};%
\mathbb{Z}
\right)  $ can be obtained; for example, formulas for the bonding maps, and a
description of the induced $%
\mathbb{Z}
$-action on the inverse sequences can be deduced from the above analysis.
\end{remark}

\subsection{Reductions of model
spaces\label{Subsection: Reductions of model spaces}}

We close this section by describing a \textquotedblleft
reduction\textquotedblright\ procedure that can be applied to a model tree and
passed along to its resulting model spaces. Beginning with a model tree
$\Gamma$ and a pair of integers $0\leq i<j$, the \emph{elementary }%
$[i,j]$\emph{-reduction} is accomplished by removing all edges in
$\overline{\Gamma_{j}-\Gamma_{i}}$, then putting in a single edge from each
tier $j$ vertex $v_{j,r}$ to the unique tier $i$ vertex $v_{i,s}$ on the
reduced edge path connecting $v_{j,r}$ to the root vertex $v_{0,1}$. The label
on that new edge is the product of the labels on the edge path in $\Gamma$
connecting $v_{j,r}$ to $v_{i,s}$. If the new tree is denoted $\Gamma^{\prime
}$ then, topologically, $\Gamma^{\prime}$ is obtained from $\Gamma$ by
crushing each component of $\overline{\Gamma_{j-1}-\Gamma_{i}}$ to a point.

The difference between $X_{\Gamma}$ and $X_{\Gamma^{\prime}}$ is easy to
discern. Remove from $X_{\Gamma}$ the interior of $\overline{X_{\Gamma_{j}%
}-X_{\Gamma_{i}}}$; then, for each tier $j$ circle $S_{j,r}^{1}$ replace the
\textquotedblleft path of mapping cylinders\textquotedblright\ in $X_{\Gamma}$
from $S_{j,r}^{1}$ to $S_{i,s}^{1}$ with a single mapping cylinder whose map
is the composition of the maps along that path. For a \textquotedblleft
naked\textquotedblright\ tier $j$ vertex, simply insert a naked edge
connecting it to the corresponding tier $i$ vertex. A standard fact about
mapping cylinders is that, for a composition $A\overset{f}{\longrightarrow
}B\overset{g}{\longrightarrow}C$, the mapping cylinder $\operatorname*{Map}%
\left(  gf\right)  $ of the composition is homotopy equivalent rel $A\cup C$
to the union $\operatorname*{Map}\left(  f\right)  \cup_{B}\operatorname*{Map}%
\left(  g\right)  $ of mapping cylinders. Applying this fact repeatedly, one
obtain a proper homotopy equivalence, fixed outside the interior of
$\overline{X_{\Gamma_{j}}-X_{\Gamma_{i}}}$, between $X_{\Gamma}$ and
$X_{\Gamma^{\prime}}$.

A \emph{reduction} of $\Gamma$ is obtained by performing the above procedure
over a, possibly infinite, sequence of closed intervals $\left\{  \left[
i_{k},j_{k}\right]  \right\}  $ with $j_{k}\leq i_{k+1}$ for all $k$. By
applying the above procedure repeatedly, and then lifting to universal covers,
we obtain the following useful fact.

\begin{proposition}
\label{Prop: Result of a reduction}Let $\Gamma^{\prime}$ be a model tree
obtained by reduction of a model tree $\Gamma$. Then the model base spaces
$X_{\Gamma^{\prime}}$ and $X_{\Gamma}$ are proper homotopy equivalent and the
model $%
\mathbb{Z}
$-spaces $\widetilde{X}_{\Gamma^{\prime}}$ and $\widetilde{X}_{\Gamma}$ are
equivariantly proper homotopy equivalent.
\end{proposition}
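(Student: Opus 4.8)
The plan is to establish the claim about the base spaces first, then lift to universal covers. For the base-space statement, I would reduce to the case of a single elementary $[i,j]$-reduction, since a general reduction is a (possibly infinite) composition of elementary reductions over the disjoint intervals $\{[i_k,j_k]\}$, and the resulting proper homotopy equivalences can be glued together because each is fixed outside the interior of $\overline{X_{\Gamma_{j_k}}-X_{\Gamma_{i_k}}}$; these regions are disjoint and locally finite, so an infinite concatenation of proper homotopy equivalences supported in disjoint pieces is again a proper homotopy equivalence. For a single elementary $[i,j]$-reduction, the discussion immediately preceding the proposition already does the real work: one writes the portion of $X_\Gamma$ between tier $i$ and tier $j$ as a union of mapping cylinders along each edge path, and repeatedly applies the standard fact that $\operatorname{Map}(gf)\simeq \operatorname{Map}(f)\cup_B\operatorname{Map}(g)$ rel $A\cup C$. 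Each such move is a homotopy equivalence fixed on the two ends of the path of cylinders; assembling them over all the (finitely many in a bounded region, locally finitely many overall) tier $j$ vertices yields a homotopy equivalence $X_\Gamma\to X_{\Gamma'}$ that is the identity outside a compact (or locally finite) set, hence proper. Naked edges are handled trivially, since collapsing a path of naked edges to a single naked edge is a homeomorphism on that piece.

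For the second assertion, I would invoke Proposition~\ref{Prop: proper lifts to proper}. The proper homotopy equivalence $h:X_\Gamma\to X_{\Gamma'}$ just constructed can be arranged to be cellular and to induce an isomorphism $\pi_1(X_\Gamma)\cong\mathbb{Z}\cong\pi_1(X_{\Gamma'})$ --- indeed on $\pi_1$ it is compatible with the retractions $\rho$ onto $S^1_{0,1}$, since the reduction does not touch $S^1_{0,1}$ and each mapping-cylinder move preserves the degree of the composite map to the base circle. By Proposition~\ref{Prop: proper lifts to proper}, the lift $\widetilde{h}:\widetilde{X}_\Gamma\to\widetilde{X}_{\Gamma'}$ to universal covers is again a proper homotopy equivalence. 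It remains to check equivariance with respect to the deck $\mathbb{Z}$-actions. Because $h$ is a map over $S^1_{0,1}$ (i.e., $\rho'\circ h=\rho$ up to homotopy, and one can take it to be an equality after a preliminary adjustment), the induced map on universal covers commutes with the deck transformations covering the identity of $S^1_{0,1}$, which are precisely the generators $\sigma_\infty\times t$ (resp.\ $\sigma'_\infty\times t$) described in Proposition~\ref{Prop: Understanding the universal cover of X}. Thus $\widetilde{h}$ intertwines the two $\mathbb{Z}$-actions, and the proper homotopy $\widetilde{h}'\widetilde{h}\simeq\operatorname{id}$ (resp.\ on the other side) can likewise be chosen equivariant by averaging or by directly lifting an equivariant homotopy downstairs (the homotopy inverse and homotopies downstairs are supported in the locally finite reduction region and are compatible with $\rho$).

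The main obstacle I anticipate is the bookkeeping needed to make the infinite concatenation of elementary reductions genuinely \emph{proper}, and to make the whole construction simultaneously cellular, compatible with $\rho$, and (after lifting) equivariant. None of these is deep, but one must be careful: the homotopy inverse and the two homotopies $h'h\simeq\operatorname{id}$, $hh'\simeq\operatorname{id}$ each need to be supported in the (disjoint, locally finite) reduction slabs so that their union is a proper map on $X_\Gamma\times[0,1]$; and the preliminary homotopy making $\rho'\circ h=\rho$ on the nose must itself be proper and not disturb the support condition. Once these compatibility conditions are arranged downstairs, Proposition~\ref{Prop: proper lifts to proper} delivers the proper homotopy equivalence upstairs, and equivariance is automatic from the fact that everything is a map over $S^1_{0,1}$; so the heart of the matter is really the careful local-finiteness argument for the infinite reduction, together with the elementary mapping-cylinder identity, both of which are already sketched in the text preceding the proposition.
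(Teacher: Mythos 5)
Your proposal is correct and follows essentially the same route as the paper, which proves the proposition exactly by the preceding discussion: iterate the elementary $[i_k,j_k]$-reductions (each a proper homotopy equivalence supported in $\overline{X_{\Gamma_{j_k}}-X_{\Gamma_{i_k}}}$, via the mapping-cylinder composition fact), note that the disjoint supports keep the infinite composite proper, and then lift to universal covers. One small simplification: the equivariance upstairs needs neither averaging nor arranging $\rho'\circ h=\rho$ on the nose---since $h_{\ast}$ carries the generator of $\pi_{1}(X_{\Gamma})\cong\mathbb{Z}$ to the generator of $\pi_{1}(X_{\Gamma'})$, uniqueness of lifts makes $\widetilde{h}$ and the lifted homotopies automatically intertwine the deck transformations.
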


\begin{example}
The proper homotopy equivalence discussed in subcase a) of Remark
\ref{Remark. Subcases of infinite-ended Y} can now be viewed as the result of
a reduction. Choose $j$ so large that $\Gamma^{+}\subseteq\Gamma_{j}$ and
perform the elementary $[i,j]$-reduction.
\end{example}

\section{Connections to Bass-Serre theory\label{Section: Bass-Serre theory}}

This section is a brief diversion. Bass-Serre theory is not needed for the
purposes of this paper, but for those with a previous understanding of that
topic, the connection can make some of our constructions easier to follow.

Beginning with a model tree $\Gamma$, create a \emph{graph of groups} as
follows: place a copy of $%
\mathbb{Z}
$ on each vertex and each edge of $\Gamma^{+}$ and a trivial group $0$ on the
vertices and edges in $\Gamma-\Gamma^{+}$; then interpret the labels $k_{i,j}$
as multiplication homomorphisms. The result is an elaborate graph of groups
decomposition of $%
\mathbb{Z}
$, where the copy of $%
\mathbb{Z}
$ at the root vertex includes isomorphically into the fundamental group of the
graph of groups. (All homomorphisms on reversed edges are identities.) The
model space $X_{\Gamma}$ is the corresponding\emph{ total space} for $\Gamma$,
as described in \cite{ScWa77} and \cite[Ch.6]{Ge08}. The subgraph $\Gamma^{+}$
determines a simpler graph of groups decomposition of $%
\mathbb{Z}
$ that is consistent with $\Gamma$ and has total space $X_{\Gamma}%
^{+}\subseteq X_{\Gamma}$. The tree $\Lambda^{+}$ constructed above is the
Bass-Serre tree corresponding to $\Gamma^{+}$ and $\sigma_{\infty}$ is a
generator of the corresponding action. See \cite[Ch.I, \S 4.5]{Se80}.

The Bass-Serre tree $\Lambda^{\ast}$ for the full graph of groups $\Gamma$
does not play a direct role here, but it is lurking in the background. One may
expand $\Lambda$ to $\Lambda^{\ast}$ as follows: Viewing $\Gamma$ as a subset
of $\Lambda$, replace each subtree $\Omega_{i,j}\leq\Gamma$ with a countably
infinite wedge of copies of $\Omega_{i,j}$, all joined at the root vertex
$v_{i,j}$ of $\Omega_{i,j}$. Designate one copy as the original $\Omega_{i,j}$
and the rest as clones. Then, at each clone of $v_{i,j}$ in $\Lambda$, attach
another infinite wedge of copies of $\Omega_{i,j}$, all viewed as clones. The
need for countably infinite collections is because the group at $v_{i,j}$ is $%
\mathbb{Z}
$ while all incoming edge groups are trivial, and thus have countably infinite
index in the vertex group at $v_{i,j}$.

\section{Associating models to $%
\mathbb{Z}
$-actions\label{Section: Associating models to proper Z-actions}}

We return to the primary objects of interest---simply connected, locally
compact ANRs admitting $%
\mathbb{Z}
$-actions by covering transformations. Observations from Sections
\ref{Subsection: Proper homotopy equivalences and n-equivalences} and
\ref{Section: Coaxial and Strongly Coaxial homeomorphisms} allow us to focus
on strongly locally finite CW complexes (or even locally finite polyhedra)
admitting such $%
\mathbb{Z}
$-actions. In this section, we prove the primary technical results of this
paper. At the conclusion, we will have obtained the following:

\begin{theorem}
For $Y$ a simply connected, locally compact ANR, and $j:Y\rightarrow Y$ a
homeomorphism generating an action by covering transformations with
$J\equiv\left\langle j\right\rangle \cong%
\mathbb{Z}
$, there is a corresponding model tree $\Gamma$ so that

\begin{enumerate}
\item $\widetilde{X}_{\Gamma}$ is $%
\mathbb{Z}
$--equivariantly properly $1$-equivalent to $Y$,

\item If $j$ is strongly coaxial, $J\backslash Y$ is properly 2-equivalent to
$X_{\Gamma}$; hence $Y$ is $%
\mathbb{Z}
$-equivariantly properly 2-equivalent to $\widetilde{X}_{\Gamma}$.

\item If $j$ is coaxial, $Y$ is properly 2-equivalent to $\widetilde{X}%
_{\Gamma}$ via proper 2-equivalences that are $%
\mathbb{Z}
$-equivariant on 1-skeleta.\medskip
\end{enumerate}
\end{theorem}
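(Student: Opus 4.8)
The plan is to build the model tree $\Gamma$ directly from an exhaustion of $Y$ by $J$-invariant neighborhoods of infinity, reading off the labels $k_{i,j}$ from how $\pi_1$ behaves at successive levels. By Corollary~\ref{Corollary: Equivariant West theorem} I may replace $Y$ with a locally finite polyhedron on which $J$ acts by simplicial covering transformations, and then work inside the 2-skeleton; by the Remark following Proposition~\ref{Prop: coaxial is a group theoretic property}, being [strongly] coaxial is detected on 2-skeleta, so nothing is lost. Passing to the quotient $Z = J\backslash Y$, which is a locally finite polyhedron with $\pi_1$ containing a central $\mathbb{Z}$ (coming from $J$), I exhaust $Z$ by compacta $A_1 \subseteq A_2 \subseteq \cdots$ and look at the components of $Z - A_i$ together with the images of $\pi_1$ of those components in $\pi_1(Z)$. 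Because $Y$ is simply connected, loops in $Z$ lifting to loops in $Y$ are exactly those mapping into the central $\mathbb{Z}$; the coaxial hypothesis, in the form of Lemma~\ref{Lemma: Basic Lemma 1}(3) for the strong case and Definition~\ref{Defn: Main definition}(1) for the plain case, controls precisely how such loops die as one goes deeper toward infinity. Each component-at-level-$i$ that carries a nontrivial multiple of the central class contributes a vertex $v_{i,j}$, and the integer $k_{i,j}$ records the index $[\langle\text{class at level } i{-}1\rangle : \text{image of level } i]$; components whose $\pi_1$-image misses the center entirely generate the null edges. This produces $\Gamma$ together with its labeling satisfying condition (i), since once a loop fails to involve the center it continues to do so deeper in.

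\textbf{Key steps in order.} First I reduce to the locally finite polyhedral, 2-skeletal setting and set up the $J$-invariant exhaustion of $Y$ descending to an exhaustion of $Z = J\backslash Y$ by compacta, refining the exhaustion (passing to a subsequence, \`a la the ``after passing to a subsequence and relabeling'' convention) so that the coaxiality-type condition holds between consecutive levels. Second, I extract the combinatorial data: the tree of components-at-infinity of $Z$ furnished with the center-class indices, verify rule (i), and thereby define $\Gamma$, $X_\Gamma$, and $\widetilde{X}_\Gamma$. Third, I construct the comparison maps: a map $Z \to X_\Gamma$ (on 2-skeleta) realizing the identity on $\pi_1$ and matching up the $\pi_1$-data at each level of the exhaustions --- here the tiered mapping-telescope structure of $X_\Gamma$ from \S\ref{Section: Model spaces} is the target to aim at, level by level. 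Using Proposition~\ref{Prop: n-equivalences preserve (n-1)-invariants} (the proper version), this map is a proper 2-equivalence provided it induces pro-isomorphisms of $\operatorname{pro}\text{-}\pi_1$ along every proper ray --- and the construction is designed precisely so that those pro-$\pi_1$ sequences are the labeled $\mathbb{Z}$-sequences of Proposition~\ref{Proposition: End properties of the model space}. Fourth, I lift: by Proposition~\ref{Prop: proper lifts to proper}, a proper $n$-equivalence downstairs that is an isomorphism on $\pi_1$ lifts to a proper $n$-equivalence of universal covers, which is automatically $J$-equivariant since the deck groups are identified, yielding assertion (2). For assertion (1), I only need a proper 1-equivalence, so the center-class bookkeeping is irrelevant --- I just match components-at-infinity, which is possible without any coaxial hypothesis; and this lift is $\mathbb{Z}$-equivariant for the same deck-group reason. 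For assertion (3), I build the equivalence $Y \to \widetilde{X}_\Gamma$ directly upstairs: coaxiality (not strong coaxiality) still forces loops far out in $Y - J\cdot D$ to contract in $Y - C$, which is exactly what is needed to control $\operatorname{pro}\text{-}\pi_1(Y, r)$ along rays, and one arranges the map to be $\mathbb{Z}$-equivariant on 1-skeleta by defining it equivariantly on the (invariant) 1-skeleton first and then extending cell-by-cell over 2-cells without worrying about equivariance there.

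\textbf{Main obstacle.} The hard part is step three in case (2): verifying that the strongly coaxial hypothesis genuinely upgrades the comparison map from a proper 1-equivalence to a proper 2-equivalence, i.e. that it forces the quotient's $\operatorname{pro}\text{-}\pi_1$ at infinity to be pro-isomorphic to the labeled-$\mathbb{Z}$ sequence rather than something with extra generators. Concretely, one must show that the only ``surviving'' loops at infinity in $Z$ are (multiples of) the central class --- a loop in $(J\backslash Y) - B$ that does \emph{not} lift to a loop in $Y$ must, after going to a deeper level, either become null-homotopic in $Z$ or be pushed into the center, and the strongly coaxial condition (via Lemma~\ref{Lemma: Basic Lemma 1}(3)) is exactly the lever for this, but marshalling it uniformly across all components and all ray-directions, with the indices $k_{i,j}$ coming out consistently multiplicative along edge paths, is where the real work lies. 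A secondary subtlety is that a single compact $C\subseteq Y$ may have $J\cdot C$ meeting infinitely many components of $Y - (\text{smaller invariant set})$, so one must be careful that the tree $\Gamma$ is locally finite and leafless; local finiteness follows from local compactness of $Y$ (only finitely many unbounded components at each stage, by the ANR fact recalled in \S\ref{topology}), and leaflessness from the ``every edge lies in an infinite ray'' structure forced by the exhaustion being cofinal, but these points need to be checked rather than asserted.
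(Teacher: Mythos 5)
Your overall architecture for assertions (1) and (2) tracks the paper's actual proof: the paper also builds $\Gamma$ from an exhaustion of $J\backslash Y$ by neighborhoods of infinity, labelling edges by the indices of the inclusion-induced images $i_{\ast}(H_{1}(N_{i,j}))$ in $H_{1}(J\backslash Y)\cong\mathbb{Z}$ (homology rather than $\pi_1$, to avoid base-point issues), constructs $f:J\backslash Y\rightarrow X_{\Gamma}$ level-by-level through the mapping-telescope structure, proves the unconditional proper $1$-equivalence, invokes Lemma \ref{Lemma: Basic Lemma 1}(3) exactly where you do for the strongly coaxial case, and lifts via Proposition \ref{Prop: proper lifts to proper} (Corollary \ref{Cor: lifting proper 1- and 2-equivalences}). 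Your variant for (2) --- verifying that $f$ induces pro-isomorphisms on $\operatorname{pro}$-$\pi_{1}$ and citing the converse in Proposition \ref{Prop: n-equivalences preserve (n-1)-invariants}, instead of the paper's explicit construction of a proper $2$-inverse $g$ --- is viable, since a loop in $N_{i,j}$ with trivial image under $f$ is precisely one that is null-homotopic in $J\backslash Y$, i.e.\ lifts to a loop in $Y$, which is what Lemma \ref{Lemma: Basic Lemma 1}(3) contracts further out; but be aware that your framing is slightly off: $\pi_{1}(J\backslash Y)\cong\mathbb{Z}$ outright (it \emph{is} the deck group $J$), and loops of $J\backslash Y$ lift to loops in $Y$ exactly when they are trivial in $\pi_{1}(J\backslash Y)$, not when they ``map into the central $\mathbb{Z}$''.

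The genuine gap is assertion (3), which is where almost all of the paper's work lies and which your plan dispatches in two sentences. Coaxiality only controls loops lying outside a $J$-\emph{invariant} set $J\cdot D$, whereas neighborhoods of infinity in $Y$ are complements of compacta; the problematic loops --- equivalently, the boundaries of the $2$-cells of $\widetilde{X}_{\Gamma}$ over which $\widetilde{g}^{(1)}$ must be extended --- include those staying inside some $\widetilde{K}_{i}$ (bounded distance from the axis) but located at large heights in the $\mathbb{R}$-direction, and coaxiality says nothing about these. Extending ``cell-by-cell without worrying'' using simple connectivity of $Y$ gives a map with no properness control, and properness of the extension $\overline{g}$ (and of the homotopy between $\left.\overline{g}\widetilde{f}\right\vert_{Y^{(1)}}$ and the inclusion) is exactly the crux. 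The paper resolves this with the height function $\widetilde{\rho}$, the level-preserving property of $\widetilde{f}$ and $\widetilde{g}^{(1)}$ (Lemmas \ref{Lemma 2}, \ref{Lemma 2.5}), the bounded heights and ``narrowness'' of the $2$-cells of $\widetilde{X}_{\Gamma_{i}}$ (Lemma \ref{Lemma: bounded height}), and Lemma \ref{Lemma 5} --- loops of bounded height far out in the $\mathbb{R}$-direction inside $\widetilde{K}_{i}$ contract far out, a consequence of simple connectivity plus $J$-translation, \emph{not} of coaxiality --- assembled into a two-case inductive choice of the thresholds $s_{i}$, with a parallel argument for the homotopy over $Y^{(1)}$ using Lemmas \ref{Lemma: correspondence of filtered components} and \ref{Lemma: small paths in shells}. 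Without some version of this bookkeeping, your assertion that coaxiality ``is exactly what is needed to control $\operatorname{pro}$-$\pi_{1}(Y,r)$ along rays'' is unfounded; relatedly, the ``main obstacle'' you single out (case (2) downstairs) is in fact handled rather directly by Lemma \ref{Lemma: Basic Lemma 1}(3), while case (3) upstairs is the real difficulty your proposal leaves unaddressed.
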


By our work in Sections
\ref{Subsection: Proper homotopy equivalences and n-equivalences} and
\ref{Section: Coaxial and Strongly Coaxial homeomorphisms}, it is enough to
consider the case where $Y$ is a simply connected, strongly locally finite CW
complex, and $j:Y\rightarrow Y$ is a cellular homeomorphism generating an
action by covering transformations with $J\equiv\left\langle j\right\rangle
\cong%
\mathbb{Z}
$. Our first goal is to associate a model tree $\Gamma$ to this action. Begin
by choosing a nested cofinal sequence $J\backslash Y=N_{0}\supseteq
N_{1}\supseteq N_{2}\supseteq\cdots$ of subcomplex neighborhoods of infinity
in $J\backslash Y$. By discarding compact components, we may assume that each
of the (finitely many) components $\left\{  N_{i,j}\right\}  _{j=1}^{r_{i}}$
of each $N_{i}$ is unbounded.

Choose an oriented edge path loop $\alpha_{0,1}$ in $N_{0}=J\backslash Y$ that
generates $H_{1}\left(  J\backslash Y\right)  \cong%
\mathbb{Z}
$. For each component $N_{i,j}$ of each $N_{i}$ consider the inclusion induced
map $H_{1}(J\backslash Y)\overset{i_{\ast}}{\longleftarrow}H_{1}\left(
N_{i,j}\right)  $. (All homology is with $%
\mathbb{Z}
$-coefficients.) If the map is nontrivial, let $n_{i,j}$ be the index of
$i_{\ast}\left(  H_{1}\left(  N_{i,j}\right)  \right)  $ in $H_{1}(J\backslash
Y)$, and choose an oriented edge path loop $\alpha_{i,j}$ in $N_{i,j}$ taken
to $n_{i,j}\alpha_{0,1}$ by $i_{\ast}$; if it is trivial, let $n_{i,j}=0$ and
let $\alpha_{i,j}$ be a constant edge path loop in $N_{i,j}$.

\begin{remark}
Use of homology rather than fundamental group, in defining $n_{i,j}$ and
$\alpha_{i,j}$, allows us to avoid base point technicalities without loss of
any essential information.
\end{remark}

Let $K_{0}$ be a finite connected subcomplex of $J\backslash Y$ that contains
$\alpha_{0,1}$, and for each $i>0$, let $K_{i}$ be a finite connected
subcomplex of $J\backslash Y$ chosen sufficiently large so that

\begin{enumerate}
\item $\overline{J\backslash Y-N_{i}}\subseteq K_{i}$,

\item \label{item 2}for every pair of vertices in the frontier of a component
$N_{i,j}$ of $N_{i}$, $K_{i}$ contains an edge path in $N_{i,j}$ connecting
them, and

\item $K_{i}$ contains each loop in the collection $\left\{  \alpha
_{i,j}\right\}  _{j=1}^{r_{i}}$.
\end{enumerate}

By passing to a subsequence and relabeling, we may assume that $N_{i+1}%
\subseteq J\backslash Y-K_{i}$ for all $i$. Let $L_{i}=N_{i}\cap K_{i}$ and
$M_{i}=N_{i}\cap K_{i+1}$; then $M_{i}$ is a finite complex containing
disjoint subcomplexes $L_{i}$ and $L_{i+1}$, and $M_{i}\cap M_{i+1}=L_{i+1}$.
For each component $N_{i,j}$ of $N_{i}$, let $L_{i,j}=N_{i,j}\cap L_{i}$ and
$M_{i,j}=N_{i,j}\cap M_{i}$. By connectedness of $K_{i}$ and $N_{i,j}$, along
with property \ref{item 2}, each $L_{i,j}$ and $M_{i,j}$ is connected;
moreover, $M_{i,j}$ contains a component $L_{i+1,k}$ of $L_{i+1}$ if and only
if $N_{i,j}$ contains $N_{i+1,k}$. See Figure
\ref{Figure: Decomposition of Y mod J}.%
\begin{figure}[ptb]%
\centering
\includegraphics[
height=2.0003in,
width=3in
]%
{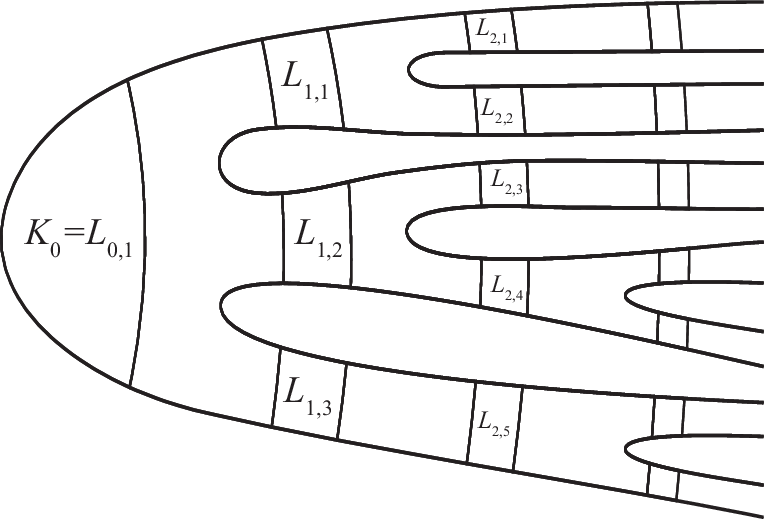}%
\caption{Decomposition of $J\backslash Y$ into subcomplexes.}%
\label{Figure: Decomposition of Y mod J}%
\end{figure}

Let $\Gamma$ be the rooted tree with a vertex $v_{i,j}$ for each $L_{i,j}$ and
an edge between $v_{i,p}$ and $v_{i+1,q}$ whenever $L_{i+1,q}\subseteq
M_{i,p}$ (equivalently $N_{i+1,q}\subseteq N_{i,p}$). The root vertex
$v_{0,1}$ corresponds to the single component $L_{0,1}$ of $L_{0}=K_{0}$ lying
in $N_{0,1}=N_{0}=J\backslash Y$. Since the $N_{i}$ have no compact
components, $\Gamma$ has no valence $1$ vertices.

Orient the edges of $\Gamma$ in the direction of $v_{0,1}$ and for each
$v_{i,j}$ with $i>0$, let $e_{i,j}$ denote the unique oriented edge emanating
from $v_{i,j}$. Label each $e_{i,j}$ with an integer $k_{i,j}$ as follows. For
the edges $e_{1,j}$ terminating at the root, let $k_{1,j}=n_{1,j}$. For $i>1$,
$k_{i,j}=0$ if $n_{i,j}=0$; otherwise let $k_{i,j}=\frac{n_{i,j}%
}{n_{i-1,j^{\prime}}}$, where $N_{i-1,j^{\prime}}$ is the unique component of
$N_{i-1}$ containing $N_{i,j}$. Since the map $H_{1}(J\backslash
Y)\overset{i_{\ast}}{\longleftarrow}H_{1}\left(  N_{i,j}\right)  $ used to
define $n_{i,j}$ factors through $H_{1}\left(  N_{i-1,j^{\prime}}\right)  $,
$k_{i,j}$ is an integer; moreover, for any $v_{i,j}$ the integer $n_{i,j}$ can
be recovered by multiplying the labels on the edge path connecting $v_{i,j}$
to $v_{0,1}$. Note that $\Gamma$ satisfies all conditions laid out in
\S \ref{Section: Model spaces} for a model tree; therefore, all definitions,
notation, and subsequent constructions from that section can be carried forward.

The tree $\Gamma$ is a good model for $\mathcal{E}\emph{nds}\left(
J\backslash Y\right)  $. Indeed, repeated application of the Tietze extension
theorem produces a proper 1-equivalence from $J\backslash Y$ to $\Gamma$.
Unfortunately, that map is of limited use: first, it has no chance of
providing information about higher-dimensional end invariants; and second, it
tells us nothing about the space $Y$, which is our primary interest. To
address those problems we construct a more delicate, map $f:J\backslash
Y\rightarrow X_{\Gamma}$ which incorporates some higher-dimensional
information and lifts to a map $\widetilde{f}:Y\rightarrow\widetilde{X}%
_{\Gamma}$. \medskip

Let $r:X_{\Gamma}\rightarrow\Gamma$ be the retraction sending each circle
$S_{i,j}^{1}$ onto $v_{i,j}$, and more generally, squashes each mapping
cylinder $d_{i,j}$ onto $e_{i,j}$ in a level-preserving manner, with point
preimages being circles. Notice that $r^{-1}\left(  \Gamma^{+}\right)
=X_{\Gamma^{+}}$. For each $i$, let $X_{\Gamma_{i}}=r^{-1}\left(  \Gamma
_{i}\right)  $ and $Q_{i}=\overline{X_{\Gamma}-X_{\Gamma_{i}}}$. Then
$X_{\Gamma_{0}}\subseteq X_{\Gamma_{1}}\subseteq X_{\Gamma_{2}}\subseteq
\cdots$ is a filtration of $X_{\Gamma}$ by finite subcomplexes, and
$X_{\Gamma}=Q_{0}\supseteq Q_{1}\supseteq Q_{2}\supseteq\cdots$ is a cofinal
sequence of subcomplex neighborhoods of infinity. For each $i$, let
$P_{i}=Q_{i}\cap X_{\Gamma_{i+1}}=r^{-1}\left(  \overline{\Gamma_{i+1}%
-\Gamma_{i}}\right)  $, a finite subcomplex consisting of $\overline
{\Gamma_{i+1}-\Gamma_{i}}$ with mapping cylinders attached along the non-null edges.

By construction, there is a one-to-one correspondence between the sequences of
neighborhoods of infinity $\left\{  N_{i}\right\}  $ and $\left\{
Q_{i}\right\}  $ so that the components $\left\{  N_{i,j}\right\}  $ of
$N_{i}$ are in one-to-one correspondence with the components $\left\{
Q_{i,j}\right\}  $ of $Q_{i}$. Moreover, for each component $M_{i,j}$ of
$M_{i}$, which contains a connected subcomplex $L_{i,j}$ on its
\textquotedblleft left-hand side\textquotedblright\ and a disjoint collection
of similar subcomplexes $\{L_{i+1,j^{\prime}}\}$ on its \textquotedblleft
right-hand side\textquotedblright, the corresponding component $P_{i,j}$ of
$P_{i}$ has a left-hand side consisting of a circle $S_{i,j}^{1}$ or vertex
$v_{i,j}$ and a right-hand side made up of circles and vertices, labeled
$S_{i+1,j^{\prime}}^{1}$ or $v_{i+1,j^{\prime}}$ (one for each subcomplex
$L_{i+1,j^{\prime}}$ in $M_{i,j}$). If some of right-hand components of
$P_{i,j}$ are circles, the left-hand side must be a circle, and $P_{i,j}$ is
made up of a union of mapping cylinders of degree $k_{i+1,j^{\prime}}$ maps
$S_{i,j}^{1}\leftarrow S_{i+1,j^{\prime}\text{ }}^{1}$ (one for each
right-hand circle) intersecting in a common range circle and \textquotedblleft
naked edges\textquotedblright\ connecting the isolated vertices of the
right-hand side to $v_{i,j}$ on the left-hand side. The map $f:J\backslash
Y\rightarrow X$ will be most easily understood from its restrictions
$f_{i,j}:M_{i,j}\rightarrow P_{i,j}$. See Figure
\ref{Figure: Constructon of fij}.%
\begin{figure}[ptb]%
\centering
\includegraphics[
trim=0.000000in 0.000000in -0.015315in 0.000000in,
height=1.9951in,
width=4.7859in
]%
{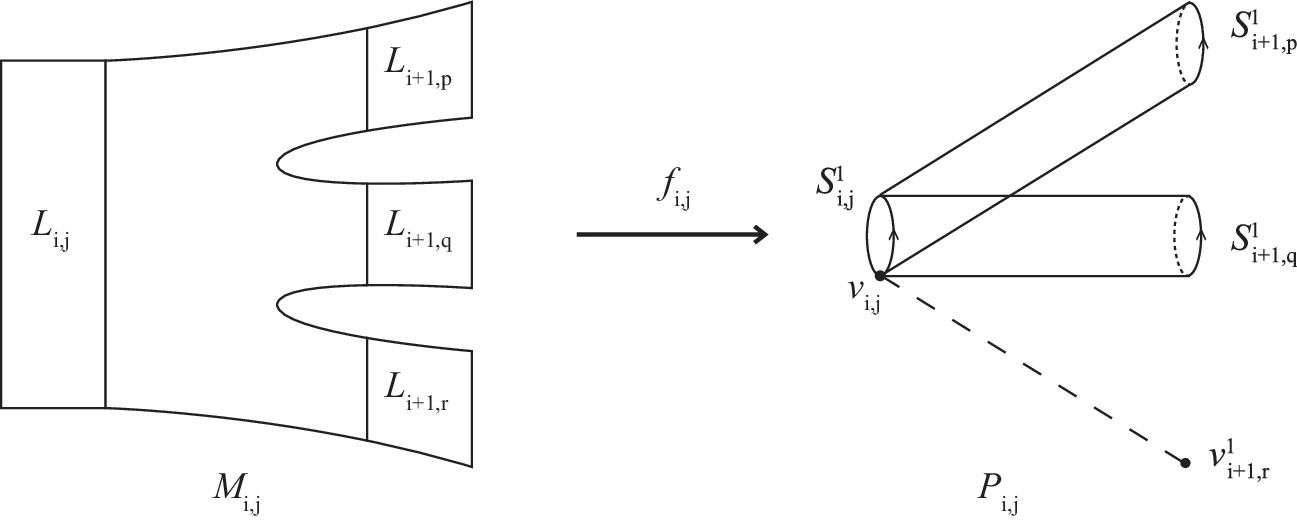}%
\caption{A building block of $f:J\backslash Y\rightarrow X_{\Gamma}$.}%
\label{Figure: Constructon of fij}%
\end{figure}

Choose a maximal tree $T_{i,j}$ in each $L_{i,j}$ then choose a maximal tree
$T_{i,j}^{^{\prime}}$ in each $M_{i,j}$ containing both $T_{i,j}$ and all
$T_{i+1,j^{\prime}}$ contained in $M_{i,j}$. Let $T=\cup T_{i,j}^{^{\prime}}$.
The tree-like structure of the collection $\left\{  M_{i,j}\right\}  $ ensures
that $T$ a maximal tree in $J\backslash Y$. Select a base vertex $p_{i,j}$
from each $L_{i,j}$, making sure that $p_{i,j}$ lies on the edge loop
$\alpha_{i,j}\subseteq L_{i,j}$ chosen previously. For each $L_{i+1,j^{\prime
}}$ on the right-hand side of an $M_{i,j}$, let $\lambda_{i+1,j^{\prime}}$ be
the unique edge path in $T_{i,j}^{\prime}$ from $p_{i,j}$ to $p_{i+1,j^{\prime
}}$.

Define $f:T\rightarrow X_{\Gamma}$ by sending each $T_{i,j}$ to $v_{i,j}$ and
every vertex of a $T_{i,j}^{\prime}$ not lying in one of those subtrees to
$v_{i,j}$. For each remaining edge $e$ of $T$, choose the $T_{i,j}^{\prime}$
containing it. If both ends of $e$ have been sent to $v_{i,j}$, send $e$ to
$v_{i,j}$; if one end has been sent to $v_{i,j}$ and the other to a
$v_{i+1,j^{\prime}}$, map $e$ homeomorphically onto $e_{i+1,j^{\prime}}$; if
one end lies in a $T_{i+1,j^{\prime}}$ and the other in a different
$T_{i+1,j^{\prime\prime}}$, send the midpoint of $e$ to $v_{i,j}$ and the two
halves of $e$ onto $e_{i+1,j^{\prime}}$ and $e_{i,j^{\prime\prime}}$, respectively.

Next we extend $f$ over the $L_{i,j}$. Each $L_{i,j}$ will be mapped into the
circle $S_{i,j}^{1}$, when that circle exists, otherwise to the vertex
$v_{i,j}$. Begin with $L_{0,1}=K_{0}$, which contains an oriented edge path
loop $\alpha_{0,1}$ that generates $H_{1}\left(  J\backslash Y\right)  $. Let
$\phi_{0,1}:H_{1}\left(  J\backslash Y\right)  \rightarrow\pi_{1}\left(
S_{0,1}^{1},v_{0,1}\right)  $ be the isomorphism taking $\alpha_{0,1}$ to the
positively oriented generator of $\pi_{1}\left(  S_{0,1},v_{0,1}\right)  $,
and consider the composition%
\[
\pi_{1}\left(  L_{0,1},p_{0,1}\right)  \twoheadrightarrow H_{1}\left(
L_{0,1}\right)  \twoheadrightarrow H_{1}\left(  J\backslash Y\right)
\overset{\phi_{0,1}}{\longrightarrow}\pi_{1}\left(  S_{0,1}^{1},v_{0,1}%
\right)  \text{.}%
\]
Recalling that $f$ has already been defined to send $T_{0,1}$ to $v_{0,1}$, we
extend over the remaining edges of $L_{0,1}$. If $e$ is one such edge then, by
giving it an orientation, it may be viewed as an element of $\pi_{1}\left(
L_{0,1},p_{0,1}\right)  $ and mapped into $S_{0,1}^{1}$ in accordance with its
image under the above homomorphism. Having mapped the 1-skeleton of $L_{0,1}$
into $S_{0,1}^{1}$ in accordance with a $\pi_{1}$-homomorphism, we may extend
to the 2-skeleton of $L_{0,1}$; then, by the asphericity of $S_{0,1}^{1}$, we
may extend to all of $L_{0,1}$. See, for example, \cite[\S 7.1]{Ge08}.

For general $L_{i,j}$, if $n_{i,j}=0$, send all of $L_{i,j}$ to $v_{i,j}$;
otherwise, the argument used above is repeated to map $L_{i,j}$ into
$S_{i,j}^{1}$, except that the map is based on the homomorphism%
\begin{equation}
\pi_{1}\left(  L_{i,j},p_{i,j}\right)  \twoheadrightarrow H_{1}\left(
L_{i,j}\right)  \twoheadrightarrow n_{i,j}\left\langle \alpha_{0,1}%
\right\rangle \overset{\phi_{i,j}}{\longrightarrow}\pi_{1}\left(  S_{i,j}%
^{1},v_{i,j}\right)  \label{homomorphism for defining f on L_ij}%
\end{equation}
where $n_{i,j}\left\langle \alpha_{0,1}\right\rangle \leq H_{1}\left(
J\backslash Y\right)  $ and $\phi_{i,j}$ is the (purely algebraic) isomorphism
taking the generator $n_{i,j}\alpha_{0,1}$ to the oriented generator of
$\pi_{1}\left(  S_{i,j}^{1},v_{i,j}\right)  $.

In the final step, we extend $f$ to all of $J\backslash Y$ by building maps
$f_{i,j}:M_{i,j}\rightarrow P_{i,j}$ that agree on their overlaps. In the
trivial cases, where $P_{i,j}$ is a wedge of of arcs, the existing map extends
to $M_{i,j}$ by the Tietze extension theorem. In the nontrivial cases,
$P_{i,j}$ strong deformation retracts onto $S_{i,j}^{1}$, and under that
retraction each $S_{i+1,j^{\prime}}^{1}$ is wrapped $k_{i+1,j^{\prime}}$ times
around $S_{i,j}^{1}$. Since $P_{i,j}$ is aspherical, we can use nearly the
same strategy as above, based on an analogous homomorphism
\[
\pi_{1}\left(  M_{i,j},p_{i,j}\right)  \twoheadrightarrow H_{1}\left(
M_{i,j}\right)  \twoheadrightarrow n_{i,j}\left\langle \alpha_{0,1}%
\right\rangle \overset{\psi_{i,j}}{\longrightarrow}\pi_{1}\left(
P_{i,j},v_{i,j}\right)
\]
On the subcomplex $T_{i,j}^{\prime}\cup L_{i,j}\cup\left(  \cup
L_{i+1,j^{\prime}}\right)  $ of $M_{i,j}$, where $f$ has already been defined,
the induced map into $\pi_{1}\left(  P_{i,j},v_{i,j}\right)  $ agrees with the
target homomorphism, so we may extend to the remaining edges, as dictated by
the homomorphism, and then to the remaining 2-cells, whose boundaries have
been sent to trivial loops in $P_{i,j}$. Finally, asphericity of $P_{i,j}$
allows us to inductively extend over the remaining cells of $M_{i,j}$.

\begin{proposition}
\label{Prop: proper 1-equivalence downstairs}The map $f:J\backslash
Y\rightarrow X_{\Gamma}$ is a proper 1-equivalence.
\end{proposition}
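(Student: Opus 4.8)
The plan is to verify that $f$ induces a bijection on ends and an isomorphism of fundamental groups of the relevant pieces, then invoke the definition of proper $1$-equivalence from \S\ref{Subsection: Proper homotopy equivalences and n-equivalences}. Recall that, since we only need a $1$-equivalence, it suffices to produce a proper $1$-inverse $g$ on the $1$-skeleton of $X_\Gamma$ with $gf$ and $fg$ properly homotopic to the identity on $0$-skeleta, or—using the characterization via end invariants—to show $f$ is proper, induces a bijection $\mathcal{E}\mathit{nds}(J\backslash Y)\to\mathcal{E}\mathit{nds}(X_\Gamma)$, and induces an isomorphism $\pi_0$ and (as the spaces are connected, trivially) $\pi_0$-level data. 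Actually the cleanest route is: a proper $1$-equivalence is a proper map that is a $1$-equivalence, and by Proposition~\ref{Prop: n-equivalences preserve (n-1)-invariants} (with $n=1$) a $1$-equivalence between CW complexes is just a map inducing a bijection on path components, while the proper version additionally requires a bijection on ends. So the substance reduces to two claims: (a) $f$ is proper, and (b) $f$ induces a bijection $\mathcal{E}\mathit{nds}(J\backslash Y)\cong\mathcal{E}\mathit{nds}(X_\Gamma)$.

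First I would establish properness. By construction $f$ carries $M_{i,j}$ into $P_{i,j}$, and the filtrations $\{K_i\}$ of $J\backslash Y$ and $\{X_{\Gamma_i}\}$ of $X_\Gamma$ are matched up: $f^{-1}\bigl(X_{\Gamma_i}\bigr)$ is contained in a finite subcomplex built from the $M_{k,j}$ with $k\le i$, hence compact. More precisely, since $Q_i=r^{-1}(\overline{X_\Gamma-X_{\Gamma_i}})$ and $f$ sends $N_{i+1}$ into $Q_i$ (because $N_{i+1}$ is built from $M_{k,j}$ with $k\ge i+1$, each mapped into $P_{k,j}\subseteq Q_i$), we get $f^{-1}(X_{\Gamma_i})\subseteq \overline{J\backslash Y - N_{i+1}}$, which is compact. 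So preimages of the cofinal compacta $X_{\Gamma_i}$ are compact, giving properness.

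Next, the bijection on ends. This is where the care in the construction pays off: the components $\{N_{i,j}\}$ of $N_i$ biject with the components $\{Q_{i,j}\}$ of $Q_i$ via the combinatorics of $\Gamma$ (stated explicitly in the paragraph preceding the proposition: "there is a one-to-one correspondence between the sequences of neighborhoods of infinity $\{N_i\}$ and $\{Q_i\}$"), and $f$ is compatible with this: $f(N_{i,j})\subseteq Q_{i,j}$, and conversely $Q_{i,j}$ deformation retracts (via $r$) into the image, so no components are merged or created. Passing to the inverse systems $\{\pi_0(N_i)\}$ and $\{\pi_0(Q_i)\}$, $f$ induces an isomorphism of pro-sets, hence a bijection $\mathcal{E}\mathit{nds}(J\backslash Y)\to\mathcal{E}\mathit{nds}(X_\Gamma)$. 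Since both spaces are path-connected, the $\pi_0$-condition for a $1$-equivalence is automatic, so $f$ is a $1$-equivalence, and combined with properness and the end bijection it is a proper $1$-equivalence by Proposition~\ref{Prop: n-equivalences preserve (n-1)-invariants}.

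The main obstacle I anticipate is not any single deep step but the bookkeeping in claim (b): one must check that the correspondence $N_{i,j}\leftrightarrow Q_{i,j}$ is genuinely natural with respect to the bonding maps (inclusions) on both sides, so that $f$ induces a map of inverse systems and not merely a levelwise bijection. This amounts to verifying $f(N_{i+1,k})\subseteq Q_{i+1,k}$ is consistent with the inclusions $N_{i+1,k}\hookrightarrow N_{i,j}$ whenever $N_{i+1,k}\subseteq N_{i,j}$—which follows because $\Gamma$ was built precisely so that $e_{i+1,k}$ runs from $v_{i+1,k}$ to $v_{i,j}$ exactly when $N_{i+1,k}\subseteq N_{i,j}$, and $f$ respects the edge structure. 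Once this compatibility is in hand, the pro-set isomorphism and hence the end bijection are formal, and the proof is complete.
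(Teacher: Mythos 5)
Your proposal is correct in substance, but it takes a genuinely different route from the paper. You verify that $f$ is proper and that it induces a levelwise, bond-compatible bijection $\pi_{0}\left(N_{i}\right)\rightarrow\pi_{0}\left(Q_{i}\right)$, and then invoke the converse direction of Proposition \ref{Prop: n-equivalences preserve (n-1)-invariants} with $n=1$; note that, strictly speaking, the condition required there is a pro-isomorphism of $\operatorname{pro}$-$\pi_{0}$ along every proper ray rather than merely a bijection of ends, but the tower isomorphism you actually establish (via $f\left(M_{i,j}\right)\subseteq P_{i,j}$ and the component correspondence $N_{i,j}\leftrightarrow Q_{i,j}$, with base components preserved) does deliver exactly that, so the citation is legitimate. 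The paper instead works directly from the definition: it constructs an explicit proper $1$-inverse $g^{\left(1\right)}:X_{\Gamma}^{\left(1\right)}\rightarrow J\backslash Y$ (vertices $v_{i,j}\mapsto p_{i,j}$, tree edges $e_{i,j}$ onto the paths $\lambda_{i,j}$, loop edges $e_{i,j}^{\prime}$ once around $\alpha_{i,j}$) together with explicit proper homotopies on $0$-skeleta. What your approach buys is brevity and a clean reduction to end invariants; what the paper's approach buys is the specific map $g^{\left(1\right)}$ and its structural properties ($g^{\left(1\right)}\left(P_{i,j}^{\left(1\right)}\right)\subseteq M_{i,j}$, level-preserving lifts, the strengthened homotopy of Remark \ref{Remark: proper 1-equivalence plus more}), which are reused essentially in Propositions \ref{Prop: proper 2-equivalence downstairs} and \ref{Prop: proper 2-equiv with only coaxial hypothesis}, where $g^{\left(1\right)}$ is extended over the $2$-cells and lifted to $\widetilde{g}^{\left(1\right)}$. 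So your argument proves the proposition as stated, but an abstract existence statement for a proper $1$-inverse would not by itself support the later coaxial arguments; if you adopt your route, you would still need to record the explicit $g^{\left(1\right)}$ (or equivalent data) for subsequent use.
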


\begin{proof}
Since $\left\{  X_{\Gamma_{i}}\right\}  $ is a finite filtration of
$X_{\Gamma}$ and $f^{-1}\left(  X_{\Gamma_{i}}\right)  =K_{i}$ for each $i$,
$f$ is proper. To complete the proof, we construct a proper map $g^{\left(
1\right)  }:X_{\Gamma}^{\left(  1\right)  }\rightarrow J\backslash Y$ such
that $\left.  gf\right\vert _{J\backslash Y^{\left(  0\right)  }}$ is properly
homotopic to $J\backslash Y^{\left(  0\right)  }\hookrightarrow J\backslash Y$
and $\left.  fg\right\vert _{X^{\left(  0\right)  }}$ is properly homotopic to
$X_{\Gamma}^{\left(  0\right)  }\hookrightarrow X_{\Gamma}$.

For each $v_{i,j}\in X_{\Gamma}^{\left(  0\right)  }$, let $g\left(
v_{i,j}\right)  =p_{i,j}$. Map each $e_{i,j}$ originating at $v_{i,j}$ and
ending at $v_{i-1,j^{\prime}}$ homeomorphically onto the (reversed) edge path
$\lambda_{i,j}$ between $p_{i,j}$ and $p_{i-1,j^{\prime}}$ in
$T_{i-1,l^{j\prime}}^{\prime}$; and map each oriented $e_{i,j}^{\prime}$ once
around the oriented edge path loop $\alpha_{i,j}$ beginning and ending at
$p_{i,j}$.

Since $g^{\left(  1\right)  }\left(  P_{i,j}^{\left(  1\right)  }\right)
\subseteq M_{i,j}$, then $g^{\left(  1\right)  }\left(  Q_{i}^{\left(
1\right)  }\right)  \subseteq N_{i}$ for all $i$; so $g^{\left(  1\right)  }$
is proper. Notice that $\left.  fg^{\left(  1\right)  }\right\vert
_{X^{\left(  0\right)  }}=\operatorname*{id}_{X^{\left(  0\right)  }}$ and,
for each vertex $p\in J\backslash Y$, if $p\in M_{i,j}$, then $f\left(
p\right)  \in P_{i,j}$; so $g^{\left(  1\right)  }\left(  f\left(  p\right)
\right)  \in M_{i,j}$. A choice of edge path $\mu_{p}$ in $M_{i,j}$ from $p$
to $g^{\left(  1\right)  }\left(  f\left(  p\right)  \right)  $ for each $p\in
M_{i,j}$ determines a proper homotopy between the inclusion and $\left.
g^{\left(  1\right)  }f\right\vert _{J\backslash Y^{\left(  0\right)  }}$.
\end{proof}

\begin{remark}
\label{Remark: proper 1-equivalence plus more}The above construction
accomplishes more than required for a 1-equivalence; specifically,
$fg^{\left(  1\right)  }$ is properly homotopic to $X_{\Gamma}^{\left(
1\right)  }\hookrightarrow X_{\Gamma}$. To see this, note that each oriented
edge $e_{i,j}$ from $v_{i,j}$ to $v_{i-1,j^{\prime}}$ is mapped by $g^{\left(
1\right)  }$ to the edge path $\lambda_{i,j}$ from $p_{i,j}$ to
$p_{i-1,j^{\prime}}$ and $f$ sends $\lambda_{i,j}$ entirely into $e_{i,j}$
with $f\left(  p_{i,j}\right)  =v_{i,j}$ and $f\left(  p_{i-1,j^{\prime}%
}\right)  =v_{i-1,j^{\prime}}$. A discrete collection of straightening
homotopies, each supported in an edge $e_{i,j}$ and fixing all vertices,
combine to properly homotope $fg^{\left(  1\right)  }$ to the identity over
the tree $\Gamma$. For the \textquotedblleft loop edges\textquotedblright%
\ $e_{i,j}^{\prime}$, the story is similar. The map $g^{\left(  1\right)  }$
takes $e_{i,j}^{\prime}$ once around $\alpha_{i,j}\subseteq L_{i,j}$ and $f$
returns $\alpha_{i,j}$ (in fact, all of $L_{i,j}$) to $S_{i,j}^{1}%
=e_{i,j}^{\prime}\cup v_{i,j}$, with vertices going to $v_{i,j}$, some edges
sent entirely to $v_{i,j}$ and others around $S_{i,j}^{1}$ (possibly multiple
times, in the forward or reverse directions). Since homomorphism
\ref{homomorphism for defining f on L_ij}, used to define $f$ on $L_{i,j}$,
takes $\alpha_{i,j}$ to the positively oriented generator of $\pi_{1}\left(
S_{i,j}^{1},v_{i,j}\right)  $, $\left.  fg^{\left(  1\right)  }\right\vert
_{e_{i,j}^{\prime}}$ is homotopic to the identity by a base point preserving
homotopy supported in $e_{i,j}$. A discrete collection of such homotopies
completes the straightening process.\medskip
\end{remark}

The proper 1-inverse $g^{\left(  1\right)  }:X_{\Gamma}^{\left(  1\right)
}\rightarrow J\backslash Y$ of $f:J\backslash Y\rightarrow X_{\Gamma}$ becomes
more useful when extended to all of $X_{\Gamma}$, even if that extension is
not proper. With the aid of a \textquotedblleft strongly
coaxial\textquotedblright\ hypothesis, a proper extension becomes possible.

\begin{proposition}
\label{Prop: proper 2-equivalence downstairs}The map $g^{\left(  1\right)
}:X_{\Gamma}^{\left(  1\right)  }\rightarrow J\backslash Y$ constructed in the
proof of Proposition \ref{Prop: proper 1-equivalence downstairs} can always be
extended to a map $g:X_{\Gamma}\rightarrow J\backslash Y$ that induces a
$\pi_{1}$-isomorphism. If $j$ is strongly coaxial, $g$ can be chosen to be a
proper 2-inverse for $f$.
\end{proposition}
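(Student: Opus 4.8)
\emph{Overview.} The plan is to build $g$ by extending $g^{(1)}$ over the $2$-cells $d_{i,j}$ of $X_{\Gamma}$ (one for each edge $e_{i,j}$ with nonzero label $k_{i,j}$), first unconditionally, and then, assuming $j$ strongly coaxial, to arrange that every filling disk — and, separately, a promoted proper homotopy — lies increasingly deep in $J\backslash Y$. The first step is purely algebraic. Since $Y$ is simply connected and $J\cong\mathbb{Z}$ acts freely, $\pi_{1}(J\backslash Y)\cong\mathbb{Z}$ is abelian; so a loop in $J\backslash Y$ bounds a singular disk there if and only if it vanishes in $H_{1}(J\backslash Y)\cong\mathbb{Z}$, equivalently if and only if it lifts to a loop in $Y$. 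Up to orientation the attaching map of $d_{i,j}$ spells out the word $e'_{i,j}\,e_{i,j}\,(e'_{i-1,j'})^{-k_{i,j}}\,e_{i,j}^{-1}$ in $X_{\Gamma}^{(1)}$ (with $e'_{0,1}$ in place of $e'_{i-1,j'}$ when $i=1$); composing with $g^{(1)}$ and using $g^{(1)}(e'_{i,j})=\alpha_{i,j}$, $g^{(1)}(e_{i,j})=\lambda_{i,j}$, the resulting loop lies in $M_{i-1,j'}\subseteq N_{i-1}$ and has $H_{1}(J\backslash Y)$-class $(n_{i,j}-k_{i,j}n_{i-1,j'})[\alpha_{0,1}]=0$, by the defining identity $n_{i,j}=k_{i,j}n_{i-1,j'}$. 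Hence $g^{(1)}$ extends over each $d_{i,j}$ to some $g\colon X_{\Gamma}\to J\backslash Y$; any such $g$ agrees with $g^{(1)}$ on $S^{1}_{0,1}$, which carries a generator of $\pi_{1}(X_{\Gamma})\cong\mathbb{Z}$ (recall the deformation retraction $X_{\Gamma}\to S^{1}_{0,1}$) to $\alpha_{0,1}$, a generator of $\pi_{1}(J\backslash Y)$; so $g_{\#}$ is an isomorphism. This proves the first assertion.

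\emph{Making $g$ proper when $j$ is strongly coaxial.} By Lemma~\ref{Lemma: Basic Lemma 1}(3), for each $m$ there is a $\phi(m)\geq m$ (with $\phi$ nondecreasing) such that every loop in $N_{\phi(m)}$ that lifts to a loop in $Y$ contracts in $N_{m}$. Since $g^{(1)}(\partial d_{i,j})$ lies in $N_{i-1}$ and, by the previous paragraph, lifts to a loop in $Y$, whenever $i-1\geq\phi(m)$ we may fill $d_{i,j}$ by a disk in $N_{m}$; choosing $m=m(i)$ maximal gives $g(d_{i,j})\subseteq N_{m(i)}$ with $m(i)\to\infty$, while the finitely many remaining $2$-cells (those with $i-1<\phi(1)$; finite in number since $\Gamma$ is locally finite) are filled arbitrarily. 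Then $g^{-1}$ of a compact set meets only finitely many cells of $X_{\Gamma}$, each in a compact set, so $g$ is proper. Extending over $2$-cells does not disturb the fact, noted in Remark~\ref{Remark: proper 1-equivalence plus more}, that $fg|_{X_{\Gamma}^{(1)}}=fg^{(1)}$ is properly homotopic to $X_{\Gamma}^{(1)}\hookrightarrow X_{\Gamma}$; and because $f$ carries $(J\backslash Y)^{(1)}$ into $X_{\Gamma}^{(1)}$, where $g$ and $g^{(1)}$ agree, we also have $gf|_{(J\backslash Y)^{(1)}}=g^{(1)}f|_{(J\backslash Y)^{(1)}}$. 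So it remains only to promote the proper $0$-skeletal homotopy $g^{(1)}f|_{(J\backslash Y)^{(0)}}\overset{p}{\simeq}\bigl((J\backslash Y)^{(0)}\hookrightarrow J\backslash Y\bigr)$ of Proposition~\ref{Prop: proper 1-equivalence downstairs} to a proper homotopy on $(J\backslash Y)^{(1)}$.

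\emph{Promoting the homotopy, and the main obstacle.} The key structural fact is that $f$ was built on each $M_{i,j}$ so that $f_{\#}\colon\pi_{1}(M_{i,j},p_{i,j})\to\pi_{1}(P_{i,j},v_{i,j})$ factors as $\pi_{1}(M_{i,j})\twoheadrightarrow H_{1}(M_{i,j})\xrightarrow{\iota_{*}} n_{i,j}\langle\alpha_{0,1}\rangle\xrightarrow{\psi_{i,j}}\pi_{1}(P_{i,j})$, while $g^{(1)}$ sends a generator of $\pi_{1}(P_{i,j})$ to $\psi_{i,j}^{-1}$ of it inside $\pi_{1}(J\backslash Y)$; hence $(g^{(1)}f)_{\#}$ and the inclusion induce the same homomorphism $\pi_{1}(M_{i,j},p_{i,j})\to\pi_{1}(J\backslash Y,p_{i,j})$. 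Choosing the homotopy tracks $\mu_{p}$ inside the $M_{i,j}$ suitably, the obstruction to extending the homotopy over a cell $e\subseteq M_{i,j}$ is then a loop contained in $M_{i,j}\subseteq N_{i}$ that is null-homotopic in $J\backslash Y$, hence lifts to a loop in $Y$; by Lemma~\ref{Lemma: Basic Lemma 1}(3) it therefore bounds a disk in $N_{m(i)}$, with $m(i)\to\infty$. Carrying this out compatibly over the tree-like family $\{M_{i,j}\}$ — finitely many new cells at each stage, overlaps respected — yields the required proper homotopy, and with it a proper $2$-inverse $g$ for $f$. The load-bearing steps, and where I expect the real work to sit, are the two ``a loop bounds'' verifications: that $g^{(1)}(\partial d_{i,j})$ and the edge-obstruction loops die in $\pi_{1}(J\backslash Y)$ — the first reducing to the label identity together with abelianness of $\pi_{1}(J\backslash Y)$, the second to the deliberate $\pi_{1}$-agreement of $g^{(1)}f$ with the inclusion on every building block $M_{i,j}$, plus a careful choice of the tracks $\mu_{p}$. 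Once these hold, the strongly coaxial hypothesis, via Lemma~\ref{Lemma: Basic Lemma 1}(3), is precisely what converts ``bounds'' into ``bounds deep in $J\backslash Y$'', which makes both $g$ and the promoted homotopy proper.
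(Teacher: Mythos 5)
Your proposal is correct and follows essentially the same route as the paper: extend $g^{(1)}$ over each $d_{i,j}$ because the image of its attaching loop is null-homologous by the identity $n_{i,j}=k_{i,j}n_{i-1,j'}$ (hence null-homotopic, $\pi_{1}(J\backslash Y)\cong\mathbb{Z}$ being abelian), then use the strongly coaxial hypothesis, via Lemma \ref{Lemma: Basic Lemma 1}, to push the filling disks toward infinity for properness, and finally promote the $0$-skeletal homotopy over $(J\backslash Y)^{(1)}$ using the $\pi_{1}$-agreement of $g^{(1)}f$ with inclusion on each $M_{i,j}$ together with carefully (inductively) chosen tracks $\mu_{p}$ and one more application of strong coaxiality. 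The paper executes the track-choosing step a bit more explicitly (first over the maximal tree $T$, then over non-tree edges via the paths $\lambda_{p},\lambda_{q}$), but your outline identifies exactly the same mechanism.
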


\begin{proof}
To obtain $g:X_{\Gamma}\rightarrow J\backslash Y$, we need only extend
$g^{\left(  1\right)  }$ over the 2-cells $d_{i,j}$ of $X_{\Gamma}$. Each
$d_{i,j}$ is glued to $X_{\Gamma}^{\left(  1\right)  }$ along a loop of the
form $(e_{i,j}^{\prime})^{k_{i,j}}\cdot e_{i,j}\cdot(e_{i-1,j^{\prime}%
}^{\prime})^{-1}\cdot(e_{i,j})^{-1}$, and that loop is mapped to
$(\alpha_{i,j})^{k_{i,j}}\cdot\lambda_{i,j}\cdot(\alpha_{i-1,j^{\prime}}%
)^{-1}\cdot(\lambda_{i,j})^{-1}$ which is homologically, and hence
homotopically, trivial in $J\backslash Y$. So the map can be extended.

If $j$ is strongly coaxial, then by Lemma \ref{Lemma: Basic Lemma 1}, we may
(by passing to a subsequence of $\left\{  K_{i}\right\}  $) assume that, for
each $n\geq1$, loops in $J\backslash Y-K_{n}$ that are null-homotopic in
$J\backslash Y$ contract in $J\backslash Y-K_{n-1}$.\footnote{Actually,
passing to a subsequence changes the corresponding model tree $\Gamma$, and
thus $X_{\Gamma}$. That change is precisely a reduction of $\Gamma$ to a
$\Gamma^{\prime}$, as discussed in
\S \ref{Subsection: Reductions of model spaces}. By Proposition
\ref{Prop: Result of a reduction}, that change does not affect the proper
homotopy type of $X_{\Gamma}$.} Since the attaching loop for each $d_{i,j}$,
lies in $P_{i}\subseteq X_{\Gamma}-X_{\Gamma_{i-1}}$, its image $(\alpha
_{i,j})^{k_{i,j}}\cdot\lambda_{i,j}\cdot(\alpha_{i-1,j^{\prime}})^{-1}%
\cdot(\lambda_{i,j})^{-1}$ lies in $N_{i}\subseteq J\backslash Y-K_{i-1}$ and
is homotopically trivial in $J\backslash Y$. Therefore it contracts in
$J\backslash Y-K_{i-2}$. Use these contractions to extend $g^{\left(
1\right)  }$ over the 2-cells of $X_{\Gamma}$ to obtain a proper map
$g:X_{\Gamma}\rightarrow J\backslash Y$. In light of Remark
\ref{Remark: proper 1-equivalence plus more}, it remains only to show that
$\left.  gf\right\vert _{J\backslash Y^{\left(  1\right)  }}$ is properly
homotopic to $J\backslash Y^{\left(  1\right)  }\hookrightarrow J\backslash
Y$. First we obtain the desired homotopy on the maximal tree $T\subseteq
J\backslash Y^{\left(  1\right)  }$ used in defining $f$. In the proof of
Proposition \ref{Prop: proper 1-equivalence downstairs} we obtained a proper
homotopy between $J\backslash Y^{\left(  0\right)  }\hookrightarrow
J\backslash Y$ and $\left.  gf\right\vert _{J\backslash Y^{\left(  0\right)
}}$ by choosing a proper family of edge paths $\mu_{p}$ between $p$ and
$gf\left(  p\right)  $ for $p\in Y^{\left(  0\right)  }$. Moving inductively
outward from the base vertex $p_{0}$, we can rechoose the $\mu_{p}$, if
necessary, so the loops $e\cdot\mu_{q}\cdot(gf(e))^{-1}\cdot\mu_{p}^{-1}$,
where $e$ is an edge in $T$ from $p$ to $q$, bound a proper collection of
singular disks in $Y$; in particular, if $e$ lies in $T_{i,j}^{\prime}$,
arrange for the disk to lie in $M_{i,j}$. Together these disks determine a
proper homotopy on $T$. To complete the homotopy, let $e$ be an edge in
$J\backslash Y^{\left(  1\right)  }-T$. Choose the $M_{i,j}$ containing $e$
and let $\lambda_{p}$ and $\lambda_{q}$ be reduced edge paths in
$T_{i,j}^{\prime}$ connecting $p_{i,j}$ to the initial and terminal points $p$
and $q$ of $e$, respectively. By construction of $f$ and $g^{\left(  1\right)
}$,$\lambda_{p}\cdot e\cdot\lambda_{q}^{-1}$ and $g^{\left(  1\right)
}f\left(  \lambda_{p}\cdot e\cdot\lambda_{q}^{-1}\right)  $ are homotopic in
$J\backslash Y$, by choice of the $K_{i}$, they are homotopic in $J\backslash
Y-K_{i-2}$. Since a homotopy has already been constructed between these loops
away from $e$, with tracks $\mu_{p}$ and $\mu_{q}$ at $p$ and $q$,
respectively, it must be that $e\cdot\mu_{q}\cdot(gf(e))^{-1}\cdot\mu_{p}%
^{-1}$ is null homotopic in $J\backslash Y-K_{i-2}$. Filling each such loop
with a singular disk completes the proper homotopy between $J\backslash
Y^{\left(  1\right)  }\hookrightarrow J\backslash Y$ and $\left.
gf\right\vert _{J\backslash Y^{\left(  1\right)  }}$.
\end{proof}

\begin{corollary}
\label{Cor: lifting proper 1- and 2-equivalences}Let $Y$ be a simply
connected, strongly locally finite CW complex, and $j:Y\rightarrow Y$ a
cellular homeomorphism generating an action by covering transformations with
$J\equiv\left\langle j\right\rangle \cong%
\mathbb{Z}
$, and let $\Gamma$ be the corresponding model tree. Then $Y$ is $%
\mathbb{Z}
$-equivariantly proper 1-equivalent to the model $%
\mathbb{Z}
$-space $\widetilde{X}_{\Gamma}$. If $j$ is strongly coaxial, then $Y$ is $%
\mathbb{Z}
$-equivariantly proper 2-equivalent to $\widetilde{X}_{\Gamma}$.
\end{corollary}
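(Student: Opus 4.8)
The plan is to lift the downstairs maps $f : J\backslash Y \to X_\Gamma$ and $g : X_\Gamma \to J\backslash Y$, together with the relevant homotopies, to the universal covers, using the fact that both maps induce $\pi_1$-isomorphisms. First I would invoke Proposition \ref{Prop: proper 1-equivalence downstairs} and Proposition \ref{Prop: proper 2-equivalence downstairs}: $f$ is a proper $1$-equivalence with proper $1$-inverse $g^{(1)}$, and $g^{(1)}$ extends to $g : X_\Gamma \to J\backslash Y$ inducing a $\pi_1$-isomorphism (a proper $2$-inverse when $j$ is strongly coaxial). Since $Y \to J\backslash Y$ and $\widetilde{X}_\Gamma \to X_\Gamma$ are the universal covers and $f_\#$ is an isomorphism, $f$ lifts to $\widetilde{f} : Y \to \widetilde{X}_\Gamma$; the lift is determined up to the action of the deck group once we fix a base point, so by choosing base points compatibly we may arrange $\widetilde{f}$ to be $J$-equivariant (this is exactly the situation of Corollary \ref{Corollary: Equivariant West theorem}, where an equivariant map is produced by lifting). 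Similarly $g$ lifts to an equivariant $\widetilde{g} : \widetilde{X}_\Gamma \to Y$.

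Next I would apply Proposition \ref{Prop: proper lifts to proper}: since $f$ and $g$ are proper maps between locally compact ANRs inducing $\pi_1$-isomorphisms, their lifts $\widetilde{f}$ and $\widetilde{g}$ are again proper, and the [proper] $n$-equivalence property transfers to the lifts. Concretely, Proposition \ref{Prop: proper 1-equivalence downstairs} says $f$ is a proper $1$-equivalence, so $\widetilde{f}$ is a proper $1$-equivalence; since $\widetilde{X}_\Gamma$ and $Y$ are simply connected this is the statement that $\widetilde{f}$ induces a bijection on ends and a pro-isomorphism on $\operatorname{pro}$-$\pi_0$, which is the $n=1$ content. Equivariance is automatic because the lifts were chosen equivariant. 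This gives the first assertion: $Y$ is $\mathbb{Z}$-equivariantly proper $1$-equivalent to $\widetilde{X}_\Gamma$.

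For the strongly coaxial case, Proposition \ref{Prop: proper 2-equivalence downstairs} upgrades $g$ to a proper $2$-inverse for $f$, so the pair $(f,g)$ witnesses that $f$ is a proper $2$-equivalence downstairs. Applying Proposition \ref{Prop: proper lifts to proper} to the proper $2$-equivalence $f$ and its lift $\widetilde{f}$ yields that $\widetilde{f} : Y \to \widetilde{X}_\Gamma$ is a proper $2$-equivalence; by Proposition \ref{Prop: n-equivalences preserve (n-1)-invariants} this amounts to $\widetilde{f}$ being a $2$-equivalence (automatic, since both spaces are simply connected) that induces pro-isomorphisms on $\operatorname{pro}$-$\pi_k$ for $k \le 1$ along every proper ray. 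Equivariance again comes free from the equivariant choice of lifts, so $Y$ is $\mathbb{Z}$-equivariantly proper $2$-equivalent to $\widetilde{X}_\Gamma$.

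The main obstacle I expect is bookkeeping around equivariance of the lifted homotopies, not the lifting of the maps themselves. Lifting a single map and arranging equivariance by a base-point choice is routine, but a proper $2$-equivalence is a package of two maps \emph{and} two homotopies ($gf \simeq \mathrm{id}$ on $1$-skeleta, $fg \simeq \mathrm{id}$), and one must check that each homotopy lifts to a proper homotopy that is compatible with the $J$-action — i.e. that the lift of the homotopy $gf \simeq \mathrm{id}$ really is $J$-equivariant and still proper. Here properness of the lifted homotopy follows from Proposition \ref{Prop: proper lifts to proper} (applied to the homotopies, viewed as maps on $X \times [0,1]$, which also induce $\pi_1$-isomorphisms), and equivariance follows because the restriction of the lifted homotopy to level $0$ was chosen equivariant, so uniqueness of lifts forces equivariance throughout. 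Once this is spelled out, the corollary is an immediate consequence of Propositions \ref{Prop: proper 1-equivalence downstairs}, \ref{Prop: proper 2-equivalence downstairs}, and \ref{Prop: proper lifts to proper}.
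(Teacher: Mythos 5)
Your strongly coaxial half is essentially the paper's own argument: there both $f$ and $g$ are proper 2-equivalences inducing $\pi_{1}$-isomorphisms, so they lift (equivariantly, since the lift intertwines the deck groups via $f_{\#}$) and Proposition \ref{Prop: proper lifts to proper} transfers the proper 2-equivalence property to $\widetilde{f}$ and $\widetilde{g}$. Your remarks about lifting the homotopies are also fine.

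The gap is in the general (first) assertion, where you write that ``$f$ and $g$ are proper maps \ldots\ their lifts $\widetilde{f}$ and $\widetilde{g}$ are again proper.'' Without the strongly coaxial hypothesis, Proposition \ref{Prop: proper 2-equivalence downstairs} gives only that the extension $g:X_{\Gamma}\rightarrow J\backslash Y$ of $g^{\left(1\right)}$ induces a $\pi_{1}$-isomorphism; it does \emph{not} give properness of $g$, because the 2-cells $d_{i,j}$ are filled in using arbitrary null-homotopies in $J\backslash Y$, which may reach arbitrarily far back toward $K_{0}$. (The paper explicitly calls this $g$ ``not necessarily proper,'' and in the proof of Proposition \ref{Prop: proper 2-equiv with only coaxial hypothesis} the lift $\widetilde{g}$ is discarded for exactly this reason.) So your appeal to Proposition \ref{Prop: proper lifts to proper} for $g$ does not apply, and your claimed equivariant proper inverse $\widetilde{g}$ is unjustified. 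Applying Proposition \ref{Prop: proper lifts to proper} to $f$ alone does give that $\widetilde{f}$ is a proper 1-equivalence, but for the \emph{equivariant} statement you still need an equivariant proper 1-inverse. The missing step is: lift the (possibly non-proper) $g$ anyway --- it induces a $\pi_{1}$-isomorphism, so an equivariant lift $\widetilde{g}$ exists --- and then restrict to the 1-skeleton $\widetilde{X}_{\Gamma}^{\left(1\right)}$. That restriction is a lift of the \emph{proper} map $g^{\left(1\right)}:X_{\Gamma}^{\left(1\right)}\rightarrow J\backslash Y$ along the restricted $J$-coverings (note that $\widetilde{X}_{\Gamma}^{\left(1\right)}$ is not the universal cover of $X_{\Gamma}^{\left(1\right)}$, so Proposition \ref{Prop: proper lifts to proper} cannot be applied to $g^{\left(1\right)}$ directly); a $J$-equivariant map covering a proper map is proper, so $\widetilde{g}^{\left(1\right)}$ is the required equivariant proper 1-inverse. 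With that substitution your outline matches the paper's proof.
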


\begin{proof}
In the general case, the proper 1-equivalence $f:$ $J\backslash Y\rightarrow
X_{\Gamma}$ lifts to a $%
\mathbb{Z}
$-equivariant proper 1-equivalence $\widetilde{f}:$ $Y\rightarrow
\widetilde{X}_{\Gamma}$ whose proper equivariant 1-inverse is obtained by
lifting the (not necessarily proper) $g:X_{\Gamma}\rightarrow J\backslash Y$
to $\widetilde{g}:\widetilde{X}_{\Gamma}\rightarrow Y$, then noting that its
restriction $\widetilde{g}^{\left(  1\right)  }$to $\widetilde{X}_{\Gamma
}^{\left(  1\right)  }$ (the 1-skeleton of $\widetilde{X}_{\Gamma}$, not the
universal cover of $X_{\Gamma}^{\left(  1\right)  }$), being a lift of
$g^{\left(  1\right)  }:X_{\Gamma}^{\left(  1\right)  }\rightarrow J\backslash
Y$, is proper.

When $j$ is strongly coaxial, the proper 2-equivalences $f:$ $J\backslash
Y\rightarrow X_{\Gamma}$ and $g:X_{\Gamma}\rightarrow J\backslash Y$ lift to $%
\mathbb{Z}
$-equivariant proper 2-equivalences $\widetilde{f}:$ $Y\rightarrow
\widetilde{X}_{\Gamma}$ and $\widetilde{g}:\widetilde{X}_{\Gamma}\rightarrow
Y$.
\end{proof}

We now address the situation where $j$ is only assumed to be coaxial. With
significant additional effort, we will recover nearly the full strength of
Corollary \ref{Cor: lifting proper 1- and 2-equivalences}.

\begin{proposition}
\label{Prop: proper 2-equiv with only coaxial hypothesis}Let $Y$ be a strongly
locally finite CW complex and $j:Y\rightarrow Y$ a cellular homeomorphism
generating a proper rigid action with $J\equiv\left\langle j\right\rangle
\cong%
\mathbb{Z}
$, and let $\Gamma$ be a corresponding model tree. If $j$ is coaxial, then $Y$
is proper 2-equivalent to $\widetilde{X}_{\Gamma}$ via maps that are $%
\mathbb{Z}
$-equivariant on the 1-skeleta of $Y$ and $\widetilde{X}_{\Gamma}$.
\end{proposition}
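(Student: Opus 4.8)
The plan is to promote the equivariant proper $1$-equivalence $\widetilde f:Y\to\widetilde X_\Gamma$ of Corollary~\ref{Cor: lifting proper 1- and 2-equivalences}, together with its equivariant proper $1$-inverse $\widetilde g^{(1)}:\widetilde X_\Gamma^{(1)}\to Y$, to a proper $2$-equivalence by extending $\widetilde g^{(1)}$ over the $2$-cells of $\widetilde X_\Gamma$ in a proper (though no longer equivariant) way, and then checking the two composition homotopies. As in the strongly coaxial case, first pass to a subsequence of the filtration $\{K_i\}$ — a reduction of $\Gamma$, harmless by Proposition~\ref{Prop: Result of a reduction} — chosen to synchronize the coaxial hypothesis with the filtration: fix a compact exhaustion $C_0\subseteq C_1\subseteq\cdots$ of $Y$, recall from Lemma~\ref{Lemma: Basic Lemma 1} that $\widetilde K_n:=p^{-1}(K_n)=J\cdot E_n$ for compact $E_n$, and use Definition~\ref{Defn: Main definition}(1) to arrange that every null-homotopic loop of $Y$ lying in $Y-\widetilde K_n$ contracts in $Y-C_n$. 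Since $q(\sigma)=d_{i,j}\subseteq P_{i-1}\subseteq Q_{i-1}$ for a tier-$i$ $2$-cell $\sigma$ of $\widetilde X_\Gamma$, we get $\widetilde g^{(1)}(\partial\sigma)\subseteq\widetilde N_{i-1}\subseteq Y-\widetilde K_{i-2}$, hence $\widetilde g^{(1)}(\partial\sigma)$ bounds a singular disk in $Y-C_{i-2}$.

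Now extend $\widetilde g$ tier by tier. At tier $i$ there are only finitely many $J$-orbits of $2$-cells, so the $\mathbb R$-translation (axial) direction can be treated using equivariance — $J$-translates of a fixed compact disk form a proper family because the $\mathbb Z$-action is proper — while the ``radial'' escape to infinity is supplied by coaxiality, a tier-$i$ cell being filled by a disk in $Y-C_{i-2}$. \textbf{The main obstacle is reconciling these two.} The coaxial hypothesis, unlike strong coaxiality, only produces a contracting disk in the complement of a \emph{compact} set, not in the complement of a $J$-invariant tube; consequently a single equivariant choice of filling disk per orbit is inadequate (its $J$-translates drift back toward the axis and eventually meet any given compactum), while a purely cell-by-cell choice leaves the infinitely many cells of a fixed tier with fillings crowded into one fixed neighborhood of infinity. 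The resolution is a simultaneous induction on the tier $i$ and on the exhausting compacta $C_n$: having defined $\widetilde g$ properly on $q^{-1}(X_{\Gamma_{i-1}})$ together with a large collar, extend over tier $i$ by choosing, for each level-$0$ orbit representative, a contracting disk furnished by coaxiality applied to a compactum large enough to shield both the already-built part of $\widetilde g$ and a prescribed chunk of $Y$; translate equivariantly along the axis; and verify local finiteness of the resulting disk family using the inverse-mapping-telescope structure of $X_\Gamma$ (equivalently, the product $\Lambda^+\times\mathbb R$ of Proposition~\ref{Prop: Understanding the universal cover of X}), which confines the boundary of a tier-$i$ cell to a bounded height band about its level. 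Carrying out this bookkeeping is where the ``significant additional effort'' resides.

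With a proper $\widetilde g:\widetilde X_\Gamma\to Y$ in hand — equivariant on $1$-skeleta by construction, as is $\widetilde f$ — it remains to produce the proper composition homotopies. That $\widetilde f\widetilde g\,|_{\widetilde X_\Gamma^{(1)}}\overset{p}{\simeq}(\widetilde X_\Gamma^{(1)}\hookrightarrow\widetilde X_\Gamma)$ follows by lifting the downstairs proper homotopy of Remark~\ref{Remark: proper 1-equivalence plus more}, whose component homotopies were supported in single edges of $X_\Gamma$, so that its lift (supported on $J$-orbits of edges) is again proper. That $\widetilde g\widetilde f\,|_{Y^{(1)}}\overset{p}{\simeq}(Y^{(1)}\hookrightarrow Y)$ is established exactly as the corresponding assertion in the proof of Proposition~\ref{Prop: proper 2-equivalence downstairs} — build the homotopy first over a maximal tree of $Y^{(1)}$ via a proper family of singular disks, then over the remaining edges $e$ by filling the loops $\lambda_p\cdot e\cdot\lambda_q^{-1}$ and $\widetilde g^{(1)}\widetilde f(\lambda_p\cdot e\cdot\lambda_q^{-1})$ — except that, working upstairs with only the coaxial hypothesis, the filling disks must be chosen by the same tier-by-tier/equivariance scheme used to build $\widetilde g$. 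Granting both proper homotopies, Proposition~\ref{Prop: n-equivalences preserve (n-1)-invariants} identifies $\widetilde f$ and $\widetilde g$ as a proper $2$-equivalence and proper $2$-inverse that are $\mathbb Z$-equivariant on $1$-skeleta, which is the assertion.
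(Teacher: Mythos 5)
You diagnose the central difficulty correctly, but your proposed resolution is exactly the option you ruled out two sentences earlier. Choosing a coaxiality-supplied filling disk for one representative of each tier-$i$ orbit and then translating it equivariantly along the axis cannot produce a proper map: if $\delta$ fills the representative and avoids the large compactum $D$, then $j^{n}\delta$ avoids only $j^{n}D$, and for the whole family $J\cdot\delta$ to miss a fixed compactum $C$ for all large tiers you would need $\delta$ to miss $J\cdot C$ --- which is precisely the strongly coaxial hypothesis you no longer have. The appeal to the mapping-telescope structure does not repair this, because the ``bounded height band'' controls the boundaries $\widetilde{g}^{(1)}(\partial\sigma)$, not the fillings: Definition \ref{Defn: Main definition}(\ref{Main Definition: item 1}) supplies a disk in the complement of a compact set with no control on its height, and after equivariant translation nothing prevents some translate from returning to any given compactum. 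So as written the construction of a proper $\overline{g}$ (and likewise the properness of the homotopy between $Y^{(1)}\hookrightarrow Y$ and $\overline{g}\widetilde{f}|_{Y^{(1)}}$, which you delegate to the same scheme) has a genuine gap.

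The missing idea is the paper's second, non-coaxial filling mechanism. Using only simple connectivity of $Y$, compactness of $\widetilde{K}_{i}^{[0,2h]}$, and $J$-translation, one proves (Lemma \ref{Lemma 5}) that loops of height $\leq h$ lying in $\widetilde{K}_{i}$ at extreme heights bound singular disks at extreme heights; combined with Lemma \ref{Lemma: bounded height} (2-cells of $\widetilde{X}_{\Gamma_{i}}$ have uniformly bounded height) and the narrowness of 2-cells, this fills every cell that is radially close to the axis but axially far out, with fillings that automatically escape to infinity in the height direction. Coaxiality is reserved for the complementary case of cells lying radially outside $\widetilde{X}_{\Gamma_{i+1}}$, whose boundaries already lie outside the $J$-invariant tube $\widetilde{K}_{i+1}$ (Lemma \ref{Lemma 2.5}) and so can be filled cell by cell, non-equivariantly, missing $\widetilde{K}_{i}^{[-i,i]}$. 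The extension is then organized by exhausting $\widetilde{X}_{\Gamma}$ with compact windows $\widetilde{X}_{\Gamma_{i+1}}^{[-s_{i},s_{i}]}$, $s_{i}=s(i+2,h_{i+2},i+1)$, filling at each stage only the finitely many cells first meeting that window; this is what yields properness. The same dichotomy drives the homotopy $\overline{g}\widetilde{f}|_{Y^{(1)}}\overset{p}{\simeq}\bigl(Y^{(1)}\hookrightarrow Y\bigr)$: one first builds it on the $0$-skeleton from bounded-height paths inside components of the $\widetilde{M}_{i}$ (Lemmas \ref{Lemma: correspondence of filtered components} and \ref{Lemma: small paths in shells}), then fills the bounded-height loops $\beta_{e}\subseteq\widetilde{M}_{i}$, treating edges outside $\widetilde{K}_{i+1}$ by coaxiality and the remaining ones by Lemma \ref{Lemma 5} --- not by the equivariant-translation scheme. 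Your overall outline (extend $\widetilde{g}^{(1)}$ over 2-cells, lift the Remark \ref{Remark: proper 1-equivalence plus more} homotopy for $\widetilde{f}\overline{g}$) does match the paper; it is this height-versus-radial case analysis and the accompanying Lemmas \ref{Lemma 4}--\ref{Lemma: bounded height} that your proposal lacks and that cannot be bypassed.
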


Our starting point for the proof of Proposition
\ref{Prop: proper 2-equiv with only coaxial hypothesis} is the already
existing diagram
\begin{equation}%
\begin{tabular}
[c]{ccccc}%
$\ Y$ & $\overset{\widetilde{f}}{\longrightarrow}$ & $\ \widetilde{X}_{\Gamma
}$ & $\overset{\widetilde{\rho}}{\longrightarrow}$ & $\
\mathbb{R}
$\\
$p\downarrow$ &  & $q\downarrow$ &  & $r\downarrow$\\
$\ J\backslash Y$ & $\overset{f}{\longrightarrow}$ & $\ X_{\Gamma}$ &
$\overset{\rho}{\longrightarrow}$ & $\ S_{0,1}^{1}$%
\end{tabular}
\ \ \ \ \ \ \ \
\end{equation}
Discard the lift $\widetilde{g}:\widetilde{X}_{\Gamma}\rightarrow Y$, since it
may not be proper under the new hypothesis; but retain its restriction
$\widetilde{g}^{\left(  1\right)  }:\widetilde{X}_{\Gamma}^{\left(  1\right)
}\rightarrow Y$, which is a proper 1-inverse for $\widetilde{f}$. We will
construct an alternative extension $\overline{g}:\widetilde{X}_{\Gamma
}\rightarrow Y$ of $\widetilde{g}^{\left(  1\right)  }$ which is a proper
2-inverse for $\widetilde{f}$. By lifting the homotopy noted in Remark
\ref{Remark: proper 1-equivalence plus more}, we already have an equivariant
proper homotopy between\emph{ }$\widetilde{X}_{\Gamma}^{\left(  1\right)
}\hookrightarrow\widetilde{X}_{\Gamma}$ and $\widetilde{f}\widetilde{g}%
^{\left(  1\right)  }$; so it is enough to obtain a proper extension
$\overline{g}:\widetilde{X}_{\Gamma}\rightarrow Y$ and to show that $\left.
\overline{g}\widetilde{f}\right\vert _{Y^{\left(  1\right)  }}=\left.
\widetilde{g}^{\left(  1\right)  }\widetilde{f}\right\vert _{Y^{\left(
1\right)  }}$ is properly homotopic to $Y^{\left(  1\right)  }\hookrightarrow
Y$. Both tasks depend upon the coaxial hypothesis.

Before launching into the proof, we introduce some notation and prove a few
easy lemmas.

\begin{itemize}
\item For $\left[  r,s\right]  \subseteq%
\mathbb{R}
$, let $\widetilde{X}_{\Gamma}^{\left[  r,s\right]  }=\widetilde{\rho}%
^{-1}\left(  \left[  r,s\right]  \right)  $ and
\[
Y^{\left[  r,s\right]  }=\widetilde{f}^{-1}\left(  \widetilde{X}_{\Gamma
}^{\left[  r,s\right]  }\right)  =\left(  \widetilde{\rho}\widetilde{f}%
\right)  ^{-1}\left(  \left[  r,s\right]  \right)  .
\]
More generally, if $P\subseteq Y$, then $P^{\left[  r,s\right]  }=P\cap
Y^{\left[  r,s\right]  }$, and if $Q\subseteq$ $\widetilde{X}_{\Gamma}$, then
$Q^{\left[  r,s\right]  }=Q\cap\widetilde{X}_{\Gamma}^{\left[  r,s\right]  }$.
We will use similar notation for arbitrary $S\subseteq%
\mathbb{R}
$, such as $Y^{S}$ or $P^{S}$.

\item A \emph{level set} in $Y$ is a set of the form $Y^{\left\{  r\right\}
}$ or $P^{\left\{  r\right\}  }$, for $r\in%
\mathbb{R}
$; level sets in $\widetilde{X}_{\Gamma}$ are defined similarly.

\item The \emph{height} of $P\subseteq Y$ is the diameter of $\widetilde{\rho
}\widetilde{f}\left(  P\right)  $ in $%
\mathbb{R}
$; the height of $Q\subseteq$ $\widetilde{X}_{\Gamma}$ is the diameter of
$\widetilde{\rho}\left(  Q\right)  $.
\end{itemize}

Let $\left\{  K_{i}\right\}  $ be a nested exhaustion of $J\backslash Y$ by
finite connected complexes satisfying all of the basic conditions used in
constructing the model spaces, and recall the associated sequence of
neighborhoods of infinity $\left\{  N_{i}\right\}  $ and the finite
subcomplexes $L_{i}=N_{i}\cap K_{i}$ and $M_{i}=N_{i}\cap K_{i+1}$. Notice
that $\left\{  \widetilde{K}_{i}^{\left[  -i,i\right]  }\right\}
_{i=1}^{\infty}$ is a nested exhaustion of $Y$ by finite subcomplexes. By
applying the coaxial hypothesis inductively, we may (by passing to a
subsequence, then relabeling) assume that, for all $i$, loops in
$Y-\widetilde{K}_{i+1}$ contract in $Y-\widetilde{K}_{i}^{\left[  -i,i\right]
}$. For convenience, let $\Gamma$, $X_{\Gamma}$, and $\widetilde{X}_{\Gamma}$
be the models based on that exhaustion of $J\backslash Y$, and let
$f:J\backslash Y\rightarrow X_{\Gamma}$ be a corresponding map. (By
Proposition \ref{Prop: Result of a reduction}, this does not affect the proper
homotopy type of $X_{\Gamma}$ or the equivariant proper homotopy type of
$\widetilde{X}_{\Gamma}$.) Then, for the canonical finite exhaustion,
$\left\{  X_{\Gamma_{i}}\right\}  _{i=0}^{\infty}$ of $X_{\Gamma}$, the
corresponding neighborhoods of infinity $X_{\Gamma}=Q_{0}\supseteq
Q_{1}\supseteq Q_{2}\supseteq\cdots$, where $Q_{i}=\overline{X_{\Gamma
}-X_{\Gamma_{i}}}$, and the subcomplexes $P_{i}=Q_{i}\cap X_{\Gamma_{i+1}}$,
the following is immediate from the construction of $f$.

\begin{lemma}
\label{Lemma 2}Given the above setup, $\left\{  \widetilde{K}_{i}^{\left[
-i,i\right]  }\right\}  $ is a finite exhaustion of $Y$; $\left\{
\widetilde{X}_{\Gamma_{i}}^{\left[  -i,i\right]  }\right\}  $ is a finite
exhaustion of $\widetilde{X}_{\Gamma}$; and $\widetilde{f}:Y\rightarrow
\widetilde{X}_{\Gamma}$ is level-preserving and satisfies the following
properties for all $i$.

\begin{enumerate}
\item $\widetilde{f}\left(  \widetilde{K}_{i}\right)  =\widetilde{X}%
_{\Gamma_{i}}$,

\item $\widetilde{f}\left(  Y-\widetilde{K}_{i}\right)  =\widetilde{X}%
_{\Gamma}-\widetilde{X}_{\Gamma_{i}}$,

\item $\widetilde{f}\left(  \widetilde{M}_{i}\right)  =\widetilde{P}_{i}$, and

\item $\widetilde{f}\left(  \widetilde{K}_{i}^{S}\right)  =\widetilde{X}%
_{\Gamma_{i}}^{S}$ for all $S\subseteq%
\mathbb{R}
$.\smallskip
\end{enumerate}
\end{lemma}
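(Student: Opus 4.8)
The plan is to reduce Lemma~\ref{Lemma 2} to the facts about $f:J\backslash Y\rightarrow X_{\Gamma}$ that were already extracted in Section~\ref{Section: Associating models to proper Z-actions}, plus one elementary principle for transporting set-equalities through the covering projections $p:Y\rightarrow J\backslash Y$ and $q:\widetilde{X}_{\Gamma}\rightarrow X_{\Gamma}$. Throughout, $\widetilde{f}$ is the chosen $J$-equivariant lift of $f$, so $q\widetilde{f}=fp$.

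The assertions that $\{\widetilde{K}_{i}^{[-i,i]}\}$ and $\{\widetilde{X}_{\Gamma_{i}}^{[-i,i]}\}$ are finite exhaustions, and that $\widetilde{f}$ is level-preserving, are essentially formal. Since $\widetilde{\rho}\widetilde{f}:Y\rightarrow\mathbb{R}$ is $J$-equivariant with $J$ acting by unit translation, the slab $(\widetilde{\rho}\widetilde{f})^{-1}([-i,i])$ meets only finitely many $J$-translates of a fundamental domain; intersecting it with $\widetilde{K}_{i}=p^{-1}(K_{i})$ — which is $J$-cocompact because $K_{i}$ is finite — yields a finite subcomplex, and these are plainly nested. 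Given $y\in Y$, one has $p(y)\in K_{i}$ for large $i$ (the $K_{i}$ exhaust $J\backslash Y$) and $\widetilde{\rho}\widetilde{f}(y)\in[-i,i]$ for large $i$, so $y\in\widetilde{K}_{i}^{[-i,i]}$ eventually; thus the union is $Y$. The identical argument, using that $\{X_{\Gamma_{i}}\}$ is a finite filtration of $X_{\Gamma}$ and that $\widetilde{\rho}$ is the $\mathbb{R}$-valued lift of $\rho$ (equivariant for the deck action $r\mapsto r+1$), handles $\{\widetilde{X}_{\Gamma_{i}}^{[-i,i]}\}$. Finally, since $Y^{S}$ is \emph{defined} to be $(\widetilde{\rho}\widetilde{f})^{-1}(S)$ and $\widetilde{X}_{\Gamma}^{S}=\widetilde{\rho}^{-1}(S)$, the inclusion $\widetilde{f}(Y^{S})\subseteq\widetilde{X}_{\Gamma}^{S}$ is immediate, i.e.\ $\widetilde{f}$ is level-preserving.

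For (1)--(4), I would combine three ingredients: (i) $f^{-1}(X_{\Gamma_{i}})=K_{i}$, already used in the proof of Proposition~\ref{Prop: proper 1-equivalence downstairs}; (ii) surjectivity of the building blocks $f_{i,j}:M_{i,j}\rightarrow P_{i,j}$, which forces $f(K_{i})=X_{\Gamma_{i}}$, $f(M_{i})=P_{i}$, and $f$ surjective (using (i) and an easy induction on the filtrations); and (iii) the lifting principle: if $A\subseteq J\backslash Y$, $B\subseteq X_{\Gamma}$, $f(A)=B$, $\widetilde{A}=p^{-1}(A)$, $\widetilde{B}=q^{-1}(B)$, then $\widetilde{f}(\widetilde{A})=\widetilde{B}$ — the inclusion $\subseteq$ is clear, and for $\supseteq$, given $\widetilde{b}\in\widetilde{B}$ choose $a\in A$ with $f(a)=q(\widetilde{b})$, lift $a$ into $\widetilde{A}$, observe $\widetilde{f}(\widetilde{a})\in q^{-1}(q(\widetilde{b}))=J\cdot\widetilde{b}$, and translate by a power of $j$. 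Then (1) is $(A,B)=(K_{i},X_{\Gamma_{i}})$; (2) is $(A,B)=(J\backslash Y-K_{i},X_{\Gamma}-X_{\Gamma_{i}})$ (legitimate because (i) gives $f^{-1}(X_{\Gamma}-X_{\Gamma_{i}})=J\backslash Y-K_{i}$ and surjectivity of $f$ then gives the image equality, while $p^{-1}(J\backslash Y-K_{i})=Y-\widetilde{K}_{i}$ and $q^{-1}(X_{\Gamma}-X_{\Gamma_{i}})=\widetilde{X}_{\Gamma}-\widetilde{X}_{\Gamma_{i}}$); (3) is $(A,B)=(M_{i},P_{i})$; and (4) follows by combining (1) with level-preservation: for $\widetilde{x}\in\widetilde{X}_{\Gamma_{i}}^{S}$, (1) supplies $\widetilde{k}\in\widetilde{K}_{i}$ with $\widetilde{f}(\widetilde{k})=\widetilde{x}$, whereupon $\widetilde{\rho}\widetilde{f}(\widetilde{k})=\widetilde{\rho}(\widetilde{x})\in S$ places $\widetilde{k}\in\widetilde{K}_{i}^{S}$, the reverse inclusion being immediate.

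The one point that is not purely formal — and where I expect the real work to lie — is ingredient (ii): that the building blocks $f_{i,j}:M_{i,j}\rightarrow P_{i,j}$ can be taken surjective, in particular that the image meets the interior of every $2$-cell $d_{i+1,j'}$ of $P_{i,j}$. Tracing the construction, the maximal tree $T$ and the loops $\alpha_{i,j}$ already carry onto the $1$-skeleton of $P_{i,j}$ (the $L_{i,j}$ wrap onto the circles $S_{i,j}^{1}$, the paths $\lambda_{i+1,j'}$ cover the connecting edges), so only the $2$-cells are at issue, and the extension of $f$ over the cells of $M_{i,j}$ proceeds by filling null-homotopic boundary loops with singular disks and then invoking asphericity. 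The claim is that there is enough freedom in those fillings to sweep each $d_{i+1,j'}$: because the defining labels satisfy $n_{i+1,j'}=k_{i+1,j'}n_{i,j}$, the attaching loop of $d_{i+1,j'}$ pulls back to a loop $\delta$ in $M_{i,j}$ that dies in $H_{1}(J\backslash Y)$, hence is null-homotopic in $J\backslash Y$, and whose $f$-image is exactly $\partial d_{i+1,j'}$; one arranges the cellular extension so that a disk filling $\delta$ maps onto $d_{i+1,j'}$. Reconciling ``hitting the $2$-cells'' with the freedom left by the asphericity step is the main, though still routine, obstacle; once (ii) is secured, the rest is the bookkeeping above.
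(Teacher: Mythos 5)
Your formal reductions are fine and, as far as they go, match the paper, which in fact offers no argument at all here: Lemma~\ref{Lemma 2} is simply asserted to be \textquotedblleft immediate from the construction of $f$\textquotedblright. The genuine gap is precisely the ingredient you flag as (ii), and your sketch does not close it. Null-homotopy of your loop $\delta$ in $J\backslash Y$ produces nothing that $f$ can map onto $d_{i+1,j^{\prime}}$: the extension of $f$ is made over the cells that $M_{i,j}$ actually possesses, and a priori $M_{i,j}$ could contain no $2$-cells at all, in which case no choice of fillings makes $f\left(  M_{i,j}\right)  \supseteq d_{i+1,j^{\prime}}$. To rescue the equality one must first prove that $M_{i,j}$ contains $2$-cells whenever $P_{i,j}$ does (this is true, but needs an argument: if $M_{i,j}$ had no $2$-cells, every $2$-chain of $J\backslash Y$ realizing $[\alpha_{i+1,j^{\prime}}]=n_{i+1,j^{\prime}}[\alpha_{0,1}]$ would have its cells supported in the disjoint subcomplexes $\overline{J\backslash Y-N_{i}}$, the other components of $N_{i}$, and $N_{i+1}$, and comparing coefficients on the $1$-cells of $N_{i+1,j^{\prime}}$ forces $\alpha_{i+1,j^{\prime}}$ to bound there, i.e.\ $n_{i+1,j^{\prime}}=0$, so $P_{i,j}$ would have no $2$-cells either), and then deliberately modify the filling on some $2$-cell of $M_{i,j}$ by inserting a sphere sweeping $\overline{d_{i+1,j^{\prime}}}$. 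That is genuinely more than bookkeeping, and it is the only place the lemma has content beyond the construction.

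There is a second, related problem: your ingredient (i), $f^{-1}\left(  X_{\Gamma_{i}}\right)  =K_{i}$, quoted from the proof of Proposition~\ref{Prop: proper 1-equivalence downstairs}, does not hold as an equality for the map as constructed; one only gets $K_{i}\subseteq f^{-1}\left(  X_{\Gamma_{i}}\right)  \subseteq K_{i+1}$, since vertices of $T_{i,j}^{\prime}$ lying outside $L_{i,j}$ are sent to $v_{i,j}\in X_{\Gamma_{i}}$, and filling disks for cells of $M_{i,j}$ may meet $S_{i,j}^{1}\subseteq X_{\Gamma_{i}}$. Consequently item (2), read literally, fails in the \textquotedblleft$\subseteq$\textquotedblright\ direction and cannot be derived the way you propose from (i) plus surjectivity. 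The honest resolution---and what the paper implicitly relies on, since everything used downstream (properness, Lemma~\ref{Lemma: correspondence of filtered components}, and the height estimates in the proof of Proposition~\ref{Prop: proper 2-equiv with only coaxial hypothesis}) needs only this much---is to prove the lemma in the weaker form $\widetilde{f}\left(  \widetilde{K}_{i}\right)  \subseteq\widetilde{X}_{\Gamma_{i}}$, $\widetilde{f}\left(  Y-\widetilde{K}_{i}\right)  \subseteq\widetilde{Q}_{i}$, $\widetilde{f}\left(  \widetilde{M}_{i}\right)  \subseteq\widetilde{P}_{i}$, plus level-preservation and surjectivity on components of the pieces; those statements, unlike the stated equalities, really are immediate from the construction, and your lifting principle then transports them upstairs without further work.
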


The construction of $g^{\left(  1\right)  }:X_{\Gamma}^{\left(  1\right)
}\rightarrow Y$ leads to similar properties for its lift.

\begin{lemma}
\label{Lemma 2.5}The function $\widetilde{g}^{\left(  1\right)  }%
:\widetilde{X}_{\Gamma}^{(1)}\rightarrow Y$ is level-preserving and satisfies
the following properties for all $i$.

\begin{enumerate}
\item $\widetilde{g}^{\left(  1\right)  }\left(  \widetilde{X}_{\Gamma_{i}%
}^{\left(  1\right)  }\right)  \subseteq\widetilde{K}_{i}^{\left(  1\right)
}$,

\item $\widetilde{g}^{\left(  1\right)  }\left(  \widetilde{Q}_{i}^{\left(
1\right)  }\right)  \subseteq\widetilde{N}_{i}^{\left(  1\right)  }$,

\item $\widetilde{g}^{\left(  1\right)  }\left(  \widetilde{P}_{i}^{\left(
1\right)  }\right)  \subseteq\widetilde{M}_{i}^{\left(  1\right)  }$, and

\item $\widetilde{g}^{\left(  1\right)  }\left(  \left(  \widetilde{X}%
_{\Gamma_{i}}^{\left(  1\right)  }\right)  ^{S}\right)  \subseteq\left(
\widetilde{K}_{i}^{\left(  1\right)  }\right)  ^{S}$ for all $S\subseteq%
\mathbb{R}
$.\smallskip
\end{enumerate}
\end{lemma}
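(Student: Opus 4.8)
The plan is to unwind the explicit construction of $g^{(1)}$ from the proof of Proposition~\ref{Prop: proper 1-equivalence downstairs} and transfer each conclusion across the covering projections $p$ and $q$. Recall that $g^{(1)}$ sends each vertex $v_{i,j}$ to the chosen base vertex $p_{i,j}\in L_{i,j}$, carries each tree edge $e_{i,j}$ homeomorphically onto the (reversed) reduced edge path $\lambda_{i,j}\subseteq T^{\prime}_{i-1,j'}$, and wraps each loop edge $e'_{i,j}$ once around $\alpha_{i,j}\subseteq L_{i,j}$. Since $p_{i,j}$, $\lambda_{i,j}$, and $\alpha_{i,j}$ all lie in the $1$-skeleton of $J\backslash Y$, the lift $\widetilde g^{(1)}$ carries $\widetilde X_{\Gamma}^{(1)}$ into $Y^{(1)}$; combined with the containments below, this yields (1)--(3) in their stated $1$-skeleton form.

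For the containments I would first establish the downstairs inclusions $g^{(1)}(X_{\Gamma_i}^{(1)})\subseteq K_i$, $g^{(1)}(Q_i^{(1)})\subseteq N_i$, and $g^{(1)}(P_i^{(1)})\subseteq M_i$, then apply $p^{-1}(\cdot)\cap Y^{(1)}$ to each. The middle inclusion was already recorded in the proof of Proposition~\ref{Prop: proper 1-equivalence downstairs}. For the first, $X_{\Gamma_i}^{(1)}$ consists of the vertices $v_{l,m}$ and edges $e_{l,m}$ with $l\le i$ together with the loop edges $e'_{l,m}$ with $l\le i$; since $\{K_l\}$ is nested, the inclusions $p_{l,m},\alpha_{l,m}\in L_{l,m}\subseteq K_l\subseteq K_i$ and $\lambda_{l,m}\subseteq M_{l-1,m'}\subseteq K_l\subseteq K_i$ dispose of every cell. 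For the third, $P_i^{(1)}$ is built from the tier $i$ and tier $i+1$ vertices, the edges $e_{i+1,m}$ joining them, and the loop edges $e'_{i,m'}$ and $e'_{i+1,m}$; using $L_{l,m}=N_{l,m}\cap K_l\subseteq N_i\cap K_{i+1}=M_i$ for $l\in\{i,i+1\}$ (valid since $N_{l,m}\subseteq N_i$ and $K_l\subseteq K_{i+1}$) together with $\lambda_{i+1,m}\subseteq T^{\prime}_{i,m'}\subseteq M_{i,m'}\subseteq M_i$, each cell of $P_i^{(1)}$ is carried into $M_i$. Lifting, e.g. $\widetilde g^{(1)}(\widetilde P_i^{(1)})\subseteq p^{-1}(M_i)\cap Y^{(1)}=\widetilde M_i^{(1)}$, and likewise for (1) and (2).

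For level-preservation I would show $\rho f g^{(1)}=\rho$ as maps $X_{\Gamma}^{(1)}\to S^1_{0,1}$. On vertices, both sides send $v_{i,j}$ to $v_{0,1}$, since $f(p_{i,j})=v_{i,j}$ (as $p_{i,j}\in T_{i,j}$) and $\rho(v_{i,j})=v_{0,1}$. On a tree edge $e_{i,j}$ both sides are the constant map at $v_{0,1}$: $f$ carries $\lambda_{i,j}$ into $\Gamma$, which $\rho$ collapses to $v_{0,1}$, while $e_{i,j}$ is itself an edge of $\Gamma$. The delicate case is a loop edge $e'_{i,j}$: there $\rho|_{S^1_{i,j}}$ is a uniform degree $n_{i,j}$ wrap onto $S^1_{0,1}$, and one should arrange the maps $f|_{\alpha_{i,j}}$ and $g^{(1)}|_{e'_{i,j}}$ (whose only prescribed datum is a homotopy class) so that $\rho f g^{(1)}$ wraps $e'_{i,j}$ around $S^1_{0,1}$ by exactly the same uniform degree $n_{i,j}$ map that $\rho|_{e'_{i,j}}$ does. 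Given $\rho f g^{(1)}=\rho$, lifting along $r:\mathbb{R}\to S^1_{0,1}$ and normalizing $\widetilde g^{(1)}$ at the level-$0$ lift of $v_{0,1}$ (the two lifts of $\rho q$ then coincide, being equal at a point of the connected space $\widetilde X_{\Gamma}^{(1)}$) gives $\widetilde\rho\widetilde f\widetilde g^{(1)}=\widetilde\rho$, i.e. $\widetilde g^{(1)}$ is level-preserving. Assertion (4) then follows from (1): $\widetilde g^{(1)}$ carries $(\widetilde X_{\Gamma_i}^{(1)})^S=\widetilde X_{\Gamma_i}^{(1)}\cap\widetilde\rho^{-1}(S)$ into $\widetilde K_i^{(1)}\cap(\widetilde\rho\widetilde f)^{-1}(S)=(\widetilde K_i^{(1)})^S$ for every $S\subseteq\mathbb{R}$.

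The only real obstacle is the level-preservation claim on the loop edges: one must know that the construction of $f$ on the $L_{i,j}$ and of $g^{(1)}$ on the $e'_{i,j}$ can be performed so that heights match on the nose rather than merely up to homotopy --- for instance by taking $f|_{\alpha_{i,j}}$ and the parametrization of $e'_{i,j}$ to be uniform wraps, or by a preliminary proper homotopy of $\widetilde g^{(1)}$ supported near the vertical lines $\widetilde S^1_{i,j}$. Everything else is bookkeeping with the nested exhaustion $\{K_i\}$ and the tree-like combinatorics of the decomposition $\{M_{i,j}\}$.
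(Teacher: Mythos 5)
Your bookkeeping for assertions (1)--(3) is correct and is exactly what the paper has in mind: the paper gives no written proof of Lemma \ref{Lemma 2.5}, declaring it immediate from the construction of $g^{(1)}$, and your cell-by-cell verification ($v_{l,m}\mapsto p_{l,m}\in L_{l,m}$, $e_{l,m}\mapsto\lambda_{l,m}\subseteq T'_{l-1,m'}$, $e'_{l,m}\mapsto\alpha_{l,m}\subseteq L_{l,m}$, then pull back along $p$) is the intended argument. Likewise your reduction of (4) to level-preservation, and the lifting argument (equality of the two lifts of $\rho q$ once they agree at one point of the connected $1$-skeleton) are fine \emph{granted} the downstairs identity $\rho f g^{(1)}=\rho$.

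The gap is the one you flag yourself, and neither of your proposed repairs closes it. On a loop edge, $\rho|_{e'_{i,j}}$ is the standard monotone degree-$n_{i,j}$ wrap of $S^1_{i,j}$ onto $S^1_{0,1}$, whereas $\rho f g^{(1)}|_{e'_{i,j}}=(\rho f\alpha_{i,j})\circ\theta$, where the degrees with which $f$ wraps the individual edges of $\alpha_{i,j}$ around $S^1_{i,j}$ are forced by the homomorphism (\ref{homomorphism for defining f on L_ij}) (they are the images of the corresponding tree-loops, and only their sum is $1$). So $\rho f\alpha_{i,j}$ will in general have constant stretches and backtracking, and no continuous reparametrization $\theta$ of $e'_{i,j}$ can satisfy $(\rho f\alpha_{i,j})\circ\theta=\rho|_{e'_{i,j}}$ exactly: solving $h\circ\theta=w$ with $w$ strictly monotone amounts to a continuous selection through $h$ that is monotone in the target, and this fails as soon as $h$ has an interior strict local extremum or a plateau (the selection would have to jump across the offending branch). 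For the same reason a ``preliminary proper homotopy of $\widetilde{g}^{(1)}$ near the vertical lines'' cannot produce pointwise equality: the lift of $fg^{(1)}|_{S^1_{i,j}}$ to the line $\ell$ over $S^1_{i,j}$ is a proper degree-one map commuting with translation by $n_{i,j}$, but it need not be monotone, so it has no continuous right inverse to precompose with. Thus exact level-preservation on the interiors of the loop-edge lifts is not obtainable by rechoosing parametrizations, and your proof of the lemma's first clause (hence of (4)) is incomplete as written.

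What \emph{is} true, provable by your own normalization-and-equivariance argument, and sufficient for every later use of Lemma \ref{Lemma 2.5}, is the following: the discrepancy $d=\widetilde{\rho}\widetilde{f}\widetilde{g}^{(1)}-\widetilde{\rho}$ is $J$-invariant, is locally constant on $q^{-1}(\Gamma)$ (both terms take integer values there and $fg^{(1)}$ carries $\Gamma$ into $\Gamma$ fixing vertices), and hence vanishes identically on $q^{-1}(\Gamma)$ after normalizing at a lift of $v_{0,1}$; on the lifts of the finitely many loop edges $e'_{l,m}$ with $l\le i$ it is bounded by a constant $c_i$ depending only on $i$, by $J$-invariance and compactness. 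So $\widetilde{g}^{(1)}$ is level-preserving on all vertex and tree-edge lifts and level-preserving up to $c_i$ on $\widetilde{X}_{\Gamma_i}^{(1)}$; items (1)--(3) are exact, and item (4) holds with $S$ thickened by $c_i$. The height and containment estimates in Lemma \ref{Lemma 5} and in the Claim of Proposition \ref{Prop: proper 2-equiv with only coaxial hypothesis} survive with these additive constants absorbed into the choices of $s_i$. Either prove the lemma in that robust form, or state explicitly how the construction of $f$ on the $L_{i,j}$ is to be modified; as it stands, the ``uniform wraps'' claim is the missing (and unobtainable) step.
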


The following refinement of items 2) in Lemmas \ref{Lemma 2} and
\ref{Lemma 2.5} says that $\widetilde{f}$ and $\widetilde{g}^{\left(
1\right)  }$ also respect the components of $\left\{  \widetilde{N}%
_{i}\right\}  $ and $\left\{  \widetilde{Q}_{i}\right\}  $.

\begin{lemma}
\label{Lemma: correspondence of filtered components}Let $i$ be fixed and
$\left\{  E_{k}\right\}  _{k=1}^{i_{0}}$ the finite collection of path
components of $\widetilde{Q}_{i}$. Then $\widetilde{N}_{i}$ has an equal
number of components and $\widetilde{f}$ induces a bijection between those
collections. If we label the components of $\widetilde{N}_{i}$ by $\left\{
F_{k}\right\}  _{k=1}^{i_{0}}$ so that $\widetilde{f}\left(  F_{k}\right)
=E_{k}$ for each $k$, then $\widetilde{g}^{\left(  1\right)  }$ takes
$E_{k}^{\left(  1\right)  }$ into $F_{k}^{\left(  1\right)  }$.
\end{lemma}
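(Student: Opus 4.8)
The plan is to read off all three assertions from the $\mathbb{Z}$-covering combinatorics of $p\colon Y\to J\backslash Y$ and $q\colon\widetilde{X}_\Gamma\to X_\Gamma$, together with the structural properties of $f,\widetilde f,g^{(1)},\widetilde g^{(1)}$ already established. First recall that, by construction, the path components of $N_i$ and of $Q_i$ are both indexed by the tier-$i$ vertices $v_{i,1},\dots,v_{i,r_i}$ of $\Gamma$, with $N_{i,j}\leftrightarrow v_{i,j}\leftrightarrow Q_{i,j}$; moreover $f(N_{i,j})=Q_{i,j}$, which follows from $\widetilde f(\widetilde M_m)=\widetilde P_m$ (Lemma~\ref{Lemma 2}) by projecting to $J\backslash Y$, refining to components, and unioning over $m\ge i$ (using $N_{i,j}=\bigcup M_{m,\cdot}$ and $Q_{i,j}=\bigcup P_{m,\cdot}$); symmetrically $g^{(1)}(Q_{i,j}^{(1)})\subseteq N_{i,j}$, since $g^{(1)}(v_{i,j})=p_{i,j}\in N_{i,j}$ and $g^{(1)}(P_{i,j}^{(1)})\subseteq M_{i,j}$ by Lemma~\ref{Lemma 2.5}.

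Next I would count components upstairs by elementary covering theory: for a connected subcomplex $A$ of a complex with fundamental group $\mathbb{Z}$ and universal cover $\widehat{(\,\cdot\,)}$, the path components of $\widehat A$ form a transitive $\mathbb{Z}$-set with stabiliser $\operatorname{im}(\pi_1A\to\mathbb{Z})$, so their number is the index of that image in $\mathbb{Z}$. For $A=N_{i,j}$ that image equals $\operatorname{im}(H_1 N_{i,j}\to H_1(J\backslash Y))=n_{i,j}\langle\alpha_{0,1}\rangle$, by the very definition of $n_{i,j}$; for $A=Q_{i,j}$, the block $Q_{i,j}$ deformation retracts onto the circle $S_{i,j}^1$ (collapse any attached null trees, then slide down the mapping telescope) and the map $S_{i,j}^1\to S_{0,1}^1$ has degree $n_{i,j}$ (the product of the labels along the edge path from $v_{i,j}$ to the root), so the image is again $n_{i,j}\mathbb{Z}$. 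Hence $p^{-1}(N_{i,j})$ and $q^{-1}(Q_{i,j})$ have the same number of components for each $j$, and summing over $j$ gives the first assertion, with $i_0=\sum_j[\mathbb{Z}:n_{i,j}\mathbb{Z}]$ (at worst countably infinite, and genuinely finite when $\Gamma=\Gamma^+$; if some $n_{i,j}=0$ the matching below is simply between countable collections).

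For the bijection, $\widetilde f$ is equivariant for the two deck $\mathbb{Z}$-actions and, by the above and Lemma~\ref{Lemma 2}, maps $p^{-1}(N_{i,j})$ \emph{onto} $q^{-1}(Q_{i,j})$; in each slot $j$ it is thus an equivariant surjection between transitive $\mathbb{Z}$-sets of equal cardinality, hence a bijection, and assembling over $j$ gives the stated bijection $F\leftrightarrow E:=\widetilde f(F)$ of components. Let $F_k$ be the one with $\widetilde f(F_k)=E_k$. For the last clause, $E_k^{(1)}=E_k\cap\widetilde{X}_\Gamma^{(1)}$ is the $1$-skeleton of the connected subcomplex $E_k$, hence connected, so by Lemma~\ref{Lemma 2.5}(2) the set $\widetilde g^{(1)}(E_k^{(1)})$ lies in a single component $F^{(1)}$ of $\widetilde N_i^{(1)}$; it remains to show $F=F_k$, equivalently that $\widetilde f\widetilde g^{(1)}$ induces the identity permutation on the components of $\widetilde Q_i$. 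Here I would use that $\widetilde f$ and $\widetilde g^{(1)}$ are level-preserving (Lemmas~\ref{Lemma 2} and \ref{Lemma 2.5}): for each tier-$i$ vertex $v_{i,j}$ pick a child $v_{i+1,j'}$ (one exists since $\Gamma$ is leafless) and let $\widetilde v\in q^{-1}(v_{i+1,j'})$ be the lift at $\widetilde\rho$-height $0$, a $0$-cell of $q^{-1}(Q_{i,j})$. Since $g^{(1)}(v_{i+1,j'})=p_{i+1,j'}$ and $f(p_{i+1,j'})=v_{i+1,j'}$, level-preservation forces $\widetilde g^{(1)}(\widetilde v)$ to be the height-$0$ lift of $p_{i+1,j'}$, and then $\widetilde f\widetilde g^{(1)}(\widetilde v)$ to be the height-$0$ lift of $v_{i+1,j'}$, namely $\widetilde v$. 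So $\widetilde f\widetilde g^{(1)}$ fixes the component of $\widetilde Q_i$ through $\widetilde v$; by equivariance of $\widetilde f\widetilde g^{(1)}$ it then fixes every component of $q^{-1}(Q_{i,j})$, and letting $j$ vary, every component of $\widetilde Q_i$. In particular $\widetilde f\widetilde g^{(1)}(E_k^{(1)})\subseteq E_k$; since it also lies in $\widetilde f(F)$, a single component of $\widetilde Q_i$, we conclude $\widetilde f(F)=E_k$, i.e. $F=F_k$. As $\widetilde g^{(1)}$ takes values in the $1$-skeleton of $Y$, this gives $\widetilde g^{(1)}(E_k^{(1)})\subseteq F_k^{(1)}$.

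I expect the difficulty to be bookkeeping rather than any single hard idea. The crux is to use the precise fact, built into the construction of $f$, that $f$ realises on each block $N_{i,j}$ a map whose $\pi_1$-image is \emph{exactly} $n_{i,j}\mathbb{Z}$ (not merely contained in it), so that the two transitive $\mathbb{Z}$-sets genuinely have equal cardinality and equivariance can upgrade ``surjective on components'' to ``bijective on components''. The level-preserving property of $\widetilde f$ and $\widetilde g^{(1)}$ is then what pins the permutation down directly in the last step, sidestepping the proper homotopy $\widetilde f\widetilde g^{(1)}\simeq\operatorname{incl}$ lifted from Remark~\ref{Remark: proper 1-equivalence plus more}; that homotopy would also do, but only after the messier task of confining its tracks to suitably enlarged neighbourhoods of infinity.
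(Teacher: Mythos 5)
Your proof is correct; since the paper states this lemma without proof (treating it as an immediate consequence of the construction of $f$ and $g^{(1)}$ together with Lemmas \ref{Lemma 2} and \ref{Lemma 2.5}), your covering-space argument --- counting components of $p^{-1}(N_{i,j})$ and $q^{-1}(Q_{i,j})$ by the index of the image of $\pi_{1}$ in $\mathbb{Z}$, which is $n_{i,j}\mathbb{Z}$ on both sides by the definition of $n_{i,j}$ and the labeling of $\Gamma$, and then pinning down the component correspondence for $\widetilde{g}^{(1)}$ via deck-group equivariance and level-preservation --- is precisely the justification the construction is set up to deliver. Your side remark that $i_{0}$ is genuinely finite only when no tier-$i$ component is null (otherwise both collections are countably infinite and the same equivariant matching applies) is a fair refinement of the statement as written.
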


\begin{remark}
\label{Remark: on correspondence of filterred components}A similar
correspondence between components of $\widetilde{M}_{i}$ and
$\widetilde{\overline{X_{\Gamma_{i}}-X_{\Gamma_{i-1}}}}$ can be
deduced.\medskip
\end{remark}

\begin{lemma}
\label{Lemma 4}For each $i$, there is an integer $p_{i}$ such that any two
points in a level set $\widetilde{K}_{i}^{\left\{  r\right\}  }$ can be
connected by a path in $\widetilde{K}_{i}$ of height $\leq p_{i}$.
\end{lemma}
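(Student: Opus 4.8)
The plan is to combine the cocompactness of the $J$--action on $\widetilde{K}_i$ with the $J$--equivariance of the height function, so as to reduce a statement about all real levels $r$ to a compactness argument over the single compact interval $[0,1]$. Write $h:=\widetilde{\rho}\widetilde{f}:Y\rightarrow\mathbb{R}$; recall that $h$ is part of the level--preserving structure already in force, so that $h\circ j=h+1$ (the generator of the $J$--action on $\widetilde{X}_\Gamma$ shifts $\widetilde{\rho}$ by $1$ and $\widetilde{f}$ is $J$--equivariant). I would first record two preliminary facts about $\widetilde{K}_i=p^{-1}(K_i)$. (a) $\widetilde{K}_i$ is \emph{connected}: since $Y$ is simply connected, $\pi_1(J\backslash Y)\cong J\cong\mathbb{Z}\cong H_1(J\backslash Y)$, and the loop $\alpha_{0,1}\subseteq K_0\subseteq K_i$ represents a generator of $H_1(J\backslash Y)$, so $\pi_1(K_i)\rightarrow\pi_1(J\backslash Y)$ is onto and the covering $\widetilde{K}_i\rightarrow K_i$ is connected. (b) Every bounded slab $\widetilde{K}_i^{[a,b]}$ is \emph{compact}: as $\widetilde{K}_i/J=K_i$ is compact there is a compact $D\subseteq\widetilde{K}_i$ with $J\cdot D=\widetilde{K}_i$ and $h(D)$ bounded, whence $h(j^m(D))=h(D)+m$ and $\widetilde{K}_i^{[a,b]}$ is a closed subset of a finite union of the compacta $j^m(D)$; in particular $h\vert_{\widetilde{K}_i}$ is proper, and $\widetilde{K}_i^{[0,1]}$ is compact.

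Next I would reduce to levels $r\in[0,1]$: if one $p_i$ works for every such $r$, then for arbitrary $r=m+s$ with $m\in\mathbb{Z}$ and $s\in[0,1)$, the homeomorphism $j^m$ of $\widetilde{K}_i$ carries $\widetilde{K}_i^{\{s\}}$ onto $\widetilde{K}_i^{\{r\}}$, and since heights are translation--invariant it sends a path of height $\leq p_i$ to a path of height $\leq p_i$; so $p_i$ works at every level. With that reduction in place, suppose no such $p_i$ exists. Then there are integers $n_k\rightarrow\infty$, levels $r_k\in[0,1]$, and points $a_k,b_k\in\widetilde{K}_i^{\{r_k\}}$ that cannot be joined in $\widetilde{K}_i$ by a path of height $\leq n_k$. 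The $a_k,b_k$ lie in the compact set $\widetilde{K}_i^{[0,1]}$, so after passing to a subsequence $r_k\rightarrow r_\infty$, $a_k\rightarrow a_\infty$, $b_k\rightarrow b_\infty$, with $a_\infty,b_\infty\in\widetilde{K}_i^{\{r_\infty\}}$ (using continuity of $h$ and closedness of $\widetilde{K}_i$). Since $\widetilde{K}_i$ is connected, pick a path $\sigma$ in $\widetilde{K}_i$ from $a_\infty$ to $b_\infty$; its image is compact, so $\sigma\subseteq\widetilde{K}_i^{[r_\infty-M,r_\infty+M]}$ for some integer $M\geq1$. Because CW complexes are locally path--connected, $a_\infty$ (resp.\ $b_\infty$) has a path--connected open neighbourhood $U_a$ (resp.\ $U_b$) in $\widetilde{K}_i$ on which $h$ takes values in $(r_\infty-1,r_\infty+1)$; for $k$ large, $a_k\in U_a$ and $b_k\in U_b$. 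Concatenating a path $a_k\rightarrow a_\infty$ in $U_a$, then $\sigma$, then a path $b_\infty\rightarrow b_k$ in $U_b$ yields a path in $\widetilde{K}_i$ from $a_k$ to $b_k$ whose image lies in $\widetilde{K}_i^{[r_\infty-M,r_\infty+M]}$, hence of height $\leq 2M$ --- contradicting the choice of $a_k,b_k$ as soon as $n_k>2M$. This proves the lemma, with $p_i$ the (finite) bound produced.

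The only genuinely important point, and the reason the hypotheses on $j$ are needed, is that the bound must be \emph{uniform in} $r$. A single level set is disposed of immediately --- $\widetilde{K}_i$ is connected and the level set compact --- and the whole difficulty is upgrading this to a bound independent of the level. Cocompactness of the $J$--action supplies exactly what the upgrade requires: via $h\circ j=h+1$ it replaces the real parameter $r$ by one ranging over the compact interval $[0,1]$, and it makes the bounded slabs compact so that the limiting argument can run. The remaining ingredients --- connectedness of $\widetilde{K}_i$, compactness of slabs, and local path--connectedness of CW complexes --- are routine.
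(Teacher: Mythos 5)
Your argument is correct and follows essentially the same route as the paper: reduce to levels $r\in[0,1]$ using the $J$-action's unit shift of the height function, then use connectedness of $\widetilde{K}_{i}$ together with compactness of $\widetilde{K}_{i}^{[0,1]}$ to extract a uniform bound. The only difference is presentational—you run the compactness step as a sequential/contradiction argument and verify the background facts (connectedness of $\widetilde{K}_{i}$, compactness of bounded slabs) that the paper simply asserts.
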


\begin{proof}
$\widetilde{K_{i}}$ is a connected complex and $\widetilde{K}_{i}^{\left[
0,1\right]  }$ is compact, so there exists an interval $\left[  -k,k\right]  $
so that points in $\widetilde{K}_{i}^{\left[  0,1\right]  }$ can be connected
in $\widetilde{K}_{i}^{\left[  -k,k\right]  }$. Since $\widetilde{K}%
_{i}^{\left\{  r\right\}  }$ is $J$-equivalent to a level set lying in
$\widetilde{K}_{i}^{\left[  0,1\right]  }$ we can let $p_{i}=2k$.
\end{proof}

By essentially the same argument we have:

\begin{lemma}
\label{Lemma: small paths in shells}For each $i$, there is an integer $q_{i}$
such that any two points in a level set $\widetilde{M}_{i}^{\left\{
r\right\}  }$ that lie in the same component of $\widetilde{M}_{i}$ can be
connected by a path, in that component, of height $\leq q_{i}$.
\end{lemma}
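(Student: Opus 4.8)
The plan is to run the argument of Lemma~\ref{Lemma 4} one component of $\widetilde{M}_{i}$ at a time, keeping track of the fact that $\widetilde{M}_{i}$ may have infinitely many components but only finitely many $J$-orbits of them.

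First I would assemble the structural facts. Since $M_{i}$ is a finite subcomplex of $J\backslash Y$, the set $\widetilde{M}_{i}=p^{-1}(M_{i})$ is a $J$-cocompact subcomplex of $Y$ (Lemma~\ref{Lemma: Basic Lemma 1}), and, as $j$ raises the height function $\widetilde{\rho}\widetilde{f}$ by exactly $1$ (Lemma~\ref{Lemma 2}), every slab $\widetilde{M}_{i}^{[a,b]}$ is compact. Because $M_{i}$ has only finitely many components and the components of $\widetilde{M}_{i}$ lying over a single component of $M_{i}$ form one $J$-orbit (the regular covering $p$ permutes the components of the preimage of a connected set transitively; compare Remark~\ref{Remark: on correspondence of filterred components}), the components of $\widetilde{M}_{i}$ fall into finitely many $J$-orbits. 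Fix a representative component $D$ of each orbit. The stabilizer of $D$ in $J$ is either trivial, in which case $p$ carries $D$ homeomorphically onto a component of $M_{i}$, so $D$ is compact and its level sets are trivially handled, or it equals $\langle j^{n}\rangle$ for some integer $n\geq 1$, in which case $D$ is $\langle j^{n}\rangle$-cocompact and $j^{n}$ raises heights by $n$; assume henceforth we are in this second case.

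For such a $D$, the slab $D^{[0,n]}=D\cap Y^{[0,n]}$ is compact and $D$ is a connected, locally finite complex, so the argument of Lemma~\ref{Lemma 4} applies verbatim: a compact subset of a path-connected, locally path-connected space lies in a path-connected compact subset, which has bounded image in $\mathbb{R}$, so there is an integer $k_{D}$ such that any two points of $D^{[0,n]}$ are joined by a path in $D^{[-k_{D},k_{D}]}$, hence of height $\leq 2k_{D}$. Given $r\in\mathbb{R}$, choose $m\in\mathbb{Z}$ with $r-mn\in[0,n)$; then $j^{mn}$ preserves $D$ and shifts heights by $mn$, so two points of $D^{\{r\}}$ are carried by $j^{-mn}$ into $D^{\{r-mn\}}\subseteq D^{[0,n]}$, joined there by a path of height $\leq 2k_{D}$, and $j^{mn}$ returns that to a path in $D$ of height $\leq 2k_{D}$ joining the original two points. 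Thus $q_{D}:=2k_{D}$ works for $D$.

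Finally I would promote this to the whole orbit and conclude. If $D'$ is an arbitrary component of $\widetilde{M}_{i}$, write $D'=j^{s}(D)$ for the chosen representative $D$ of its $J$-orbit; given two points of a level set $D'^{\{r\}}$, translate them by $j^{-s}$ into $D^{\{r-s\}}$, join them in $D$ by a path of height $\leq q_{D}$, and translate back by $j^{s}$ to obtain a path in $D'$ of height $\leq q_{D}$ joining the original points (translations preserve height differences). Taking $q_{i}$ to be the maximum of the finitely many numbers $q_{D}$ gives the desired uniform bound. The only nonroutine point --- and it is a mild one, since Lemma~\ref{Lemma 4} already does the analytic work --- is the reduction to a \emph{bounded} fundamental slab of a single component rather than of all of $\widetilde{M}_{i}$, which is exactly where finiteness of the number of $J$-orbits of components is used.
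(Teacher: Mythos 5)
Your proposal is correct and follows essentially the same route as the paper, which proves this lemma simply by invoking ``essentially the same argument'' as Lemma~\ref{Lemma 4}; your version is a careful elaboration of exactly that argument, run one component of $\widetilde{M}_{i}$ at a time. The orbit/stabilizer bookkeeping (finitely many $J$-orbits of components, stabilizer trivial or $\left\langle j^{n}\right\rangle$, cocompactness of a component under its stabilizer) is precisely the adaptation the paper leaves implicit, and you have supplied it correctly.
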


\begin{lemma}
\label{Lemma 5}For each triple $(i,h,r)\in%
\mathbb{N}
^{3}$, there exists $s(i,h,r)\in%
\mathbb{N}
$ such that loops in $\widetilde{K}_{i}^{(-\infty,-s]\cup\lbrack s,\infty)}$
of height $\leq h$ contract in $Y^{(-\infty,-r]\cup\lbrack r,\infty)}.$
\end{lemma}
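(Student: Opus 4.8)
The plan is to derive the statement from the \emph{global} simple connectivity of $Y$ together with the $J$-equivariance of the height function; the coaxial hypothesis plays no role here, having already been spent in fixing the exhaustions. Recall that $j$ translates $\widetilde{\rho}\widetilde{f}$ by $1$, so $j^{n}$ preserves $\widetilde{K}_{i}$ and carries $Y^{S}$ onto $Y^{S+n}$ for every $S\subseteq\mathbb{R}$.

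First I would isolate a \emph{uniformly contractible slab}. Fix $i$ and $h$ and let $W\subseteq\widetilde{K}_{i}$ be the smallest subcomplex containing every cell that meets $(\widetilde{\rho}\widetilde{f})^{-1}([0,h+1])$. Because $\widetilde{K}_{i}\to K_{i}$ is a covering of a finite complex and $j$ moves heights by a translation, the cells of $\widetilde{K}_{i}$ have uniformly bounded height, so $W$ is a finite complex, with finitely many components, each having finitely generated fundamental group. Choose finitely many loops generating these groups; each bounds a singular disk in $Y$ since $Y$ is simply connected. Adjoining these finitely many disks to $W$ produces a \emph{compact} set $Z=Z(i,h)\subseteq Y$ with the key property that every loop whose image lies in $W$ contracts inside $Z$: after a homotopy supported in $W$ such a loop becomes a concatenation of the chosen generators together with connecting paths in $W$, and each generator is killed by its disk. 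Finally pick $L=L(i,h)\in\mathbb{N}$ with $Z\subseteq Y^{[-L,L]}$ and declare $s(i,h,r):=r+L+h+2$.

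Then, given a loop $\alpha$ in $\widetilde{K}_{i}^{(-\infty,-s]\cup[s,\infty)}$ of height $\le h$, I would slide it into the slab and back. Since $\alpha$ is connected and $2s>h$, its image lies in just one of the two pieces; say $(\widetilde{\rho}\widetilde{f})(\alpha)\subseteq[a,a+h']$ with $a\ge s$ and $h'\le h$ (the lower piece is symmetric, sliding upward). Putting $n:=\lfloor a\rfloor\ge s$, we have $a-n\in[0,1)$, so the image of $j^{-n}(\alpha)$ lies in $\widetilde{K}_{i}^{[a-n,\,a-n+h']}\subseteq W$; by the previous step it contracts in $Z$, and applying $j^{n}$ shows $\alpha$ contracts in $j^{n}(Z)\subseteq Y^{[n-L,\,n+L]}\subseteq Y^{[s-L,\infty)}=Y^{[r+h+2,\infty)}\subseteq Y^{(-\infty,-r]\cup[r,\infty)}$. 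In the lower case the same computation places the contracting disk in $Y^{(-\infty,-r]}$, finishing the proof.

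The one point that deserves genuine care is the uniform assertion above — that \emph{every} loop in $W$, not merely a prescribed finite list, is contractible in the single compact set $Z$. This is precisely where finite generation of $\pi_{1}(W)$ for the compact complex $W$ is used; everything else is the height-function bookkeeping already carried out in the proof of \lemref{Lemma 4}.
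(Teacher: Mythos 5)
Your argument is correct and takes essentially the same route as the paper's proof: contract all loops of bounded height in a single compact, height-bounded slab of $\widetilde{K}_{i}$ inside a bounded portion of $Y$ (using simple connectivity of $Y$), then use $J$-translation and a choice of $s$ of the form $r+\mathrm{const}(i,h)$ to push those contractions away from the middle of $Y$. The only difference is that you make explicit, via the finite subcomplex $W$, finite generation of its fundamental groups, and finitely many spanning disks, the uniformity statement ("one compact set $Z$ serves all loops in the slab") that the paper asserts directly from compactness and simple connectivity.
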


\begin{proof}
Since $\widetilde{K}_{i}^{\left[  0,2h\right]  }$ is compact and $Y$ is simply
connected, there exists an integer $t>0$ so that all loops in $\widetilde{K}%
_{i}^{\left[  0,2h\right]  }$ contract in $Y^{\left[  -t,2h+t\right]  }$. So
by $J$-translation, for every integer $k$, loops lying in $\widetilde{K}%
_{i}^{\left[  k,k+2h\right]  }$ contract in $Y^{[k-t,\infty)}$. Let $s=r+t+1$
and note that every loop in $\widetilde{K}_{i}^{[s,\infty)}$ of height $\leq
h$ lies in $\widetilde{K}_{i}^{\left[  k,k+2h\right]  }$ for some integer
$k\geq r+t$.

A similar calculation handles loops of height $\leq h$ lying in $\widetilde{K}%
_{i}^{(-\infty,-s]}$.
\end{proof}

\begin{lemma}
\label{Lemma: bounded height}For each $i\in%
\mathbb{N}
$, there exists $h_{i}\in%
\mathbb{N}
$ such that the 2-cells of $\widetilde{X}_{\Gamma_{i}}$ have height $\leq
h_{i}$.
\end{lemma}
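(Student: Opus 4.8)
The plan is to reduce the statement to the \emph{finiteness} of the subcomplex $X_{\Gamma_{i}}\subseteq X_{\Gamma}$ together with a routine lifting argument. First I would record that $X_{\Gamma_{i}}$ is a finite $2$-complex: since $\Gamma$ is locally finite, the $i$-neighborhood $\Gamma_{i}$ of $v_{0,1}$ is a finite tree, so only finitely many loop-edges $e_{a,b}^{\prime}$ and mapping-cylinder $2$-cells $d_{a,b}$ are attached to form $X_{\Gamma_{i}}$. Since $\widetilde{X}_{\Gamma_{i}}=q^{-1}\left( X_{\Gamma_{i}}\right) $, every $2$-cell of $\widetilde{X}_{\Gamma_{i}}$ is carried homeomorphically by $q$ onto one of those finitely many $2$-cells of $X_{\Gamma_{i}}$. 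So it suffices to bound, uniformly over those finitely many cells, the height of an arbitrary lift, and then let $h_{i}$ be the ceiling of the largest bound.

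Next I would fix a $2$-cell $d$ of $X_{\Gamma_{i}}$, with characteristic map $\Phi\colon D^{2}\to X_{\Gamma}$, and a lift $\widetilde{\Phi}\colon D^{2}\to\widetilde{X}_{\Gamma}$ with $q\widetilde{\Phi}=\Phi$ and image the closed cell $\widetilde{d}$. The structural input is that $\widetilde{\rho}\colon\widetilde{X}_{\Gamma}\to\mathbb{R}$ was built as a lift of $\rho\colon X_{\Gamma}\to S_{0,1}^{1}$ through the universal covering $r\colon\mathbb{R}\to S_{0,1}^{1}$, so that $r\circ\widetilde{\rho}=\rho\circ q$; consequently $\widetilde{\rho}\circ\widetilde{\Phi}\colon D^{2}\to\mathbb{R}$ is a lift through $r$ of the \emph{fixed} map $\rho\circ\Phi\colon D^{2}\to S_{0,1}^{1}$. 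Because $D^{2}$ is compact and connected, $\widetilde{\rho}(\widetilde{d})=(\widetilde{\rho}\circ\widetilde{\Phi})(D^{2})$ is a compact connected subset of $\mathbb{R}$, hence a closed bounded interval; and any two lifts of $\rho\circ\Phi$ differ by an integer translation of $\mathbb{R}$ (a deck transformation of $r$), which preserves lengths. Hence every lift of $d$ has height equal to the same finite number $\ell(d)$, depending only on $\rho\circ\Phi$. Setting $h_{i}=\left\lceil\,\max_{d}\ell(d)\,\right\rceil$, the maximum over the finitely many $2$-cells $d$ of $X_{\Gamma_{i}}$, completes the proof.

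I do not anticipate any genuine obstacle; this is a bookkeeping lemma, and the only points that merit a sentence of care are that $\widetilde{X}_{\Gamma_{i}}=q^{-1}(X_{\Gamma_{i}})$ really is built from lifts of the cells of the \emph{finite} complex $X_{\Gamma_{i}}$ (immediate from local finiteness of $\Gamma$) and that $\widetilde{\rho}$ is compatible with the covering maps via $r\widetilde{\rho}=\rho q$ (the defining property of $\widetilde{\rho}$ from \S\ref{Section: Model spaces}). If an explicit constant were wanted, one could instead track a lift of $d_{a,b}$ directly inside $\Lambda^{+}\times\mathbb{R}$ (or inside a tree attached at an integer level) and bound the number of units it spans in the $\mathbb{R}$-direction; but the soft argument above is cleaner and is all that is needed in the sequel.
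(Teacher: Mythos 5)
Your argument is correct, but it takes a genuinely softer route than the paper. The paper computes the heights explicitly: working outward from the root, it observes that a lift of the $2$-cell $d_{i,j}$ has height equal to the product of the labels on the edge path from $v_{i,j}$ to $v_{0,1}$ (i.e.\ $n_{i,j}$), so $h_i$ can be taken to be the maximum of these finitely many products. You instead avoid any computation: finiteness of $X_{\Gamma_i}$ plus the covering-space bookkeeping ($r\widetilde{\rho}=\rho q$, compactness and connectedness of $D^{2}$, and the fact that any two lifts of $\rho\circ\Phi$ through $r$ differ by an integer translation, which preserves diameter) shows that all lifts of a fixed $2$-cell have one common finite height, and the maximum over the finitely many cells of $X_{\Gamma_i}$ does the job. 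Your version is more robust (it would work for any lift of any map to $S^1_{0,1}$ from a finite complex, with no reference to the mapping-cylinder structure), while the paper's version buys an explicit constant -- the largest product of labels on edge paths in $\Gamma_i$ -- which, although not needed later (only the existence of the bound $h_{i+2}$ enters the Claim in the proof of Proposition \ref{Prop: proper 2-equiv with only coaxial hypothesis}), makes the geometry of the $2$-cells transparent. One cosmetic point: $q$ carries \emph{open} cells homeomorphically; the closed cell upstairs is the image of a lifted characteristic map but need not map injectively, which is exactly how you actually use it in your second paragraph, so nothing is lost. Also, since each height is the integer $n_{i,j}$, the ceiling is superfluous, though harmless.
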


\begin{proof}
The $2$-cells of $\widetilde{X}_{\Gamma_{1}}$ that lie over a 2-cell $d_{1,j}$
of $X_{\Gamma_{1}}$ have height $k_{1,j}$. Moving outward, $2$-cells of
$\widetilde{X}_{\Gamma_{2}}$ that lie over a $d_{2,j}$ have height
$k_{2,j}\cdot k_{1,j^{\prime}}$, where $v_{1,j^{\prime}}$ is the terminal
vertex of $e_{2,j}$. In general, the height of a 2-cell of $\widetilde{X}%
_{\Gamma}$ lying over a 2-cell $d_{i,j}$ in $X_{\Gamma}$ is equal to the
product of the labels on the edge path connecting $v_{i,j}$ to $v_{0,1}$. So
heights of the 2-cells in $\widetilde{X}_{\Gamma_{i}}$ are bounded by the
largest such product.\bigskip
\end{proof}

\begin{remark}
In contrast to the increasing heights of the 2-cells of $\widetilde{X}%
_{\Gamma}$ as their distances from the central axis $v_{0,1}\times%
\mathbb{R}
$ increases, the widths of the 2-cells are constantly 1, when viewed as
subsets of $\Gamma^{+}\times%
\mathbb{R}
\subseteq\widetilde{X}_{\Gamma}$ and measured in the $\Gamma^{+}$-direction.
In the argument that follows, we refer to this property as the
\textquotedblleft narrowness of the 2-cells of $\widetilde{X}_{\Gamma}%
$\textquotedblright.\bigskip
\end{remark}

\begin{proof}
[Completion of the proof of Prop.
\ref{Prop: proper 2-equiv with only coaxial hypothesis}]We will construct a
proper 2-inverse $\overline{g}:\widetilde{X}_{\Gamma}\rightarrow Y$ for
$\widetilde{f}:Y\rightarrow\widetilde{X}_{\Gamma}$, by extending
$\widetilde{g}^{\left(  1\right)  }:\widetilde{X}_{\Gamma}^{(1)}\rightarrow Y$
over the 2-cells of $\widetilde{X}_{\Gamma}$. The fact that $\overline{g}$ is
$J$-equivariant and level-preserving on $\widetilde{X}_{\Gamma}^{\left(
1\right)  }$ is immediate. To assure properness of $\overline{g}$, we will
arrange that, for each $i$, only finitely many 2-cells have images
intersecting $\widetilde{K}_{i}^{\left[  -i,i\right]  }$. A similar strategy
will give the required proper homotopies. Both constructions rely on the
coaxial hypothesis.\smallskip

\noindent\textbf{Claim.} \emph{For each }$i\in%
\mathbb{N}
$\emph{, there exists }$s_{i}\in%
\mathbb{N}
$\emph{ such that, if }$\sigma$\emph{ is a }$2$\emph{-cell of }$\widetilde{X}%
_{\Gamma}$\emph{ lying outside }$\widetilde{X}_{\Gamma_{i+1}}^{\left[
-s_{i},s_{i}\right]  }$\emph{, then }$\left.  \widetilde{g}^{\left(  1\right)
}\right\vert _{\partial\sigma}$\emph{ extends to a map of }$\sigma$\emph{ into
}$Y-\widetilde{K}_{i}^{\left[  -i,i\right]  }\emph{.\smallskip}$

Let $h_{i+2}$ be the integer supplied by Lemma \ref{Lemma: bounded height};
then let $s_{i}=s(i+2,h_{i+2},i+1)$, as promised in Lemma \ref{Lemma 5}%
.\smallskip

\noindent\textsc{Case 1.} $\sigma\subseteq\widetilde{X}_{\Gamma}%
-\widetilde{X}_{\Gamma_{i+1}}$.\smallskip

By Lemma \ref{Lemma 2.5}, $\widetilde{g}^{\left(  1\right)  }$ takes
$\partial\sigma$ into $Y-\widetilde{K}_{i+1}$, so by hypothesis and choice of
$\left\{  K_{i}\right\}  $, $\left.  \widetilde{g}^{\left(  1\right)
}\right\vert _{\partial\sigma}$ extends to a map taking $\sigma$ into
$Y-\widetilde{K}_{i}^{\left[  -i,i\right]  }$.\smallskip

\noindent\textsc{Case 2.} $\sigma$\emph{ is not contained in }$\widetilde{X}%
_{\Gamma}-\widetilde{X}_{\Gamma_{i+1}}$\emph{.}\smallskip

By narrowness of 2-cells in $\widetilde{X}_{\Gamma}$, $\sigma$ lies in
$\widetilde{X}_{\Gamma_{i+2}}$; and since $\sigma$ lies outside $\widetilde{X}%
_{\Gamma_{i+1}}^{\left[  -s_{i},s_{i}\right]  }$, it lies in $\widetilde{X}%
_{\Gamma_{i+2}}^{(-\infty,-s_{i}]\cup\lbrack s_{i},\infty)}$. By Lemma
\ref{Lemma: bounded height}, $\sigma$ has height $\leq h_{i+2}$, so by Lemma
\ref{Lemma 2.5}, $\widetilde{g}^{\left(  1\right)  }$ takes $\partial\sigma$
to a loop in $\widetilde{K}_{i+2}^{(-\infty,-s_{i}]\cup\lbrack s_{i},\infty)}$
of height $\leq h_{i+2}$. By choice of $s_{i}$, $\left.  \widetilde{g}%
^{\left(  1\right)  }\right\vert _{\partial\sigma}$ extends to a map of
$\sigma$ into $Y^{(-\infty,-(i+1)]\cup\lbrack i+1,\infty)}\subseteq
Y-\widetilde{K}_{i}^{\left[  -i,i\right]  }$.\smallskip

With the claim proved, we define $\overline{g}$ inductively, as follows. Let
$\left(  s_{i}\right)  _{i\in%
\mathbb{N}
}$ be a strictly increasing sequence of integers satisfying the claim. To get
started, use simple-connectivity of $Y$ to extend $\widetilde{g}^{\left(
1\right)  }$ over all of the (finitely many) 2-cells of $\widetilde{X}%
_{\Gamma}$ that intersect $\widetilde{X}_{\Gamma_{2}}^{\left[  -s_{1}%
,s_{1}\right]  }$. Then extend over the 2-cells that miss $\widetilde{X}%
_{\Gamma_{2}}^{\left[  -s_{1},s_{1}\right]  }$ but intersect $\widetilde{X}%
_{\Gamma_{3}}^{\left[  -s_{2},s_{2}\right]  }$, using the choice of $s_{1}$ to
ensure their images miss $\widetilde{K}_{1}^{\left[  -1,1\right]  }$. Next,
extend over the 2-cells that miss $\widetilde{X}_{\Gamma_{3}}^{\left[
-s_{2},s_{2}\right]  }$ but intersect $\widetilde{X}_{\Gamma_{4}}^{\left[
-s_{3},s_{3}\right]  }$, making sure that their images miss $\widetilde{K}%
_{2}^{\left[  -2,2\right]  }$. Continue inductively to obtain a proper map
$\overline{g}:\widetilde{X}_{\Gamma}\rightarrow Y$.\medskip

To conclude that $\overline{g}$ is a proper 2-inverse for $\widetilde{f}$, we
must show that the restrictions of $\overline{g}\widetilde{f}$ and
$\widetilde{f}\overline{g}$ to the 1-skeleta of their respective domains are
properly homotopic to inclusion maps. The second of these requires no work;
just lift the proper homotopy described in Remark
\ref{Remark: proper 1-equivalence plus more}. It remains to construct a proper
homotopy between $Y^{\left(  1\right)  }\hookrightarrow Y$ and $\left.
\overline{g}\widetilde{f}\right\vert _{Y^{\left(  1\right)  }}$.

We first construct the homotopy over the $0$-skeleton of $Y$. Let $v$ be a
vertex of $Y$ and $v^{\prime}=$ $\overline{g}\widetilde{f}\left(  v\right)  $.
Choose an integer $i$ so that $v\in\widetilde{M}_{i}$. By Lemma
\ref{Lemma: correspondence of filtered components} and Remark
\ref{Remark: on correspondence of filterred components}, $v$ and $v^{\prime}$
lie in the same component of $\widetilde{M}_{i}$ and, since $\left.
\overline{g}\widetilde{f}\right\vert _{Y^{\left(  0\right)  }}$ is
level-preserving, Lemma \ref{Lemma: small paths in shells} guarantees a path
$\alpha_{v}$ from $v$ to $v^{\prime}$ in that component with height $\leq
q_{i}$. By parameterizing each $\alpha_{v}$ over $\left[  0,1\right]  $, we
obtain a proper homotopy $H_{t}^{\left(  0\right)  }$ between $Y^{\left(
0\right)  }\hookrightarrow Y$ and $\left.  \overline{g}\widetilde{f}%
\right\vert _{Y^{\left(  0\right)  }}$.

To extend $H_{t}^{\left(  0\right)  }$ over the edges of $Y^{\left(  1\right)
}$, let $e$ be a fixed (oriented) edge between vertices $v_{1}$ and $v_{2}$ in
$Y$. Since $e$ is a lift of an edge from $J\backslash Y$, $e$ lies in a
component of some $\widetilde{M}_{i}$. By Lemma
\ref{Lemma: correspondence of filtered components} and Remark
\ref{Remark: on correspondence of filterred components}, the oriented path
$e^{\prime}=$ $\widetilde{g}\widetilde{f}\left(  e\right)  $ lies in the same
component. Let $\beta_{e}$ be the loop $e\ast\alpha_{v_{2}}\ast(e^{\prime
})^{-1}\ast\alpha_{v_{1}}^{-1}$. Since $\left.  \overline{g}\widetilde{f}%
\right\vert _{Y^{\left(  1\right)  }}$ is level preserving, $e$ and
$e^{\prime}$ project to the same interval in $%
\mathbb{R}
$, so we have two key facts:%
\begin{equation}
\operatorname*{height}\left(  \beta_{e}\right)  \leq\operatorname*{height}%
\left(  e\right)  +2q_{i} \label{line: bounded height}%
\end{equation}

and%
\begin{equation}
\beta_{e}\subseteq\widetilde{M}_{i}. \label{line: narrowness of loops}%
\end{equation}
We will extend $H_{t}^{\left(  0\right)  }$ over all of $Y^{\left(  1\right)
}$ by filling in each of the $\beta_{e}$ with disks. To make $H_{t}$ proper,
we arrange that finitely many such disks intersect any given $\widetilde{K}%
_{i}^{\left[  -i,i\right]  }$. The argument is essentially the same as the one
used to construct $\overline{g}$.

If $e$ lies outside $\widetilde{K}_{i+1}$, then $\beta_{e}$ also lies in
$Y-\widetilde{K}_{i+1}$; so it can be filled in missing $\widetilde{K}%
_{i}^{\left[  -i,i\right]  }$.

For the edges $\left\{  e_{\rho}\right\}  $ lying in $\widetilde{K}_{i+1}$,
fact (\ref{line: narrowness of loops}) ensures that the loops $\left\{
\beta_{e_{\rho}}\right\}  $ also lie in $\widetilde{K}_{i+1}$. Note that there
is a uniform bound on the heights of the $\left\{  e_{\rho}\right\}  $; this
is by $J$-equivariance, since each is a lift of one of the finitely many edges
in $K_{i}$. So fact (\ref{line: bounded height}) ensures that there is an
upper bound on the heights of the $\left\{  \beta_{e_{\rho}}\right\}  $. By
applying Lemma \ref{Lemma 5}, we can fill in all but finitely many with disks
missing $\widetilde{K}_{i}^{\left[  -i,i\right]  }$.\medskip
\end{proof}

\section{General Conclusions\label{Section: General Conclusions}}

We conclude by assembling our main theorems in their most general forms. In
contrast to Theorem \ref{main} from the introduction, there are no
restrictions on the number of ends of $Y$. In all cases, $Y$ is a simply
connected, locally compact ANR admitting a $%
\mathbb{Z}
$-action by covering transformations generated by a homeomorphism
$j:Y\rightarrow Y$. Conclusions involve the topology at infinity of $Y$, in
particular, proper homotopy invariants in dimensions $<2$. The conclusions
vary, depending on assumptions placed on $j$. All serious work has been
completed. Here we need only combine the proper 1- and 2-equivalences obtained
in Propositions \ref{Prop: proper 1-equivalence downstairs},
\ref{Prop: proper 2-equivalence downstairs}, and
\ref{Prop: proper 2-equiv with only coaxial hypothesis} and Corollary
\ref{Cor: lifting proper 1- and 2-equivalences} with the analyses of the model
spaces in Propositions \ref{Prop: Understanding the universal cover of X},
\ref{Proposition: End properties of the model space}, and
\ref{Prop: Model Z-space} and Remark
\ref{Remark. Subcases of infinite-ended Y}.

In the first theorem, no additional requirements are placed on $j$. The
conclusions involve the number of ends of $Y$ and the action of $j$ on those
ends. Most, if not all, were previously known; nevertheless, the theorem
illustrates the effectiveness of our approach and places subsequent theorems
in the context of some familiar and useful facts.

\begin{theorem}
\label{Theorem: General Theorem 1}Let $Y$ be a simply connected, locally
compact ANR admitting a $%
\mathbb{Z}
$-action by covering transformations generated by a homeomorphism
$j:Y\rightarrow Y$ and let $J=\left\langle j\right\rangle $. Then $Y$ is
$J$-equivariantly properly 1-equivalent to its universal model space
$\widetilde{X}_{\Gamma}$. As a result, $Y$ has 1,2, or infinitely many ends. Moreover,

\begin{enumerate}
\item \label{Assertion 1 of General Theorem 1}if $Y$ is 2-ended, then $j$
fixes the ends of $Y$, the action is cocompact, and $Y$ is equivariantly
proper $1$-equivalent to a line;

\item if $Y$ is infinite-ended, then precisely one or two ends are stabilized
by $j$, with the rest occurring in $J$-transitive families, each member of
which has a neighborhood in $Y$ that projects homeomorphically onto a
neighborhood of an end of $J\backslash Y$;

\item $Y$ has uncountably many ends if and only if $J\backslash Y$ has
uncountably many $\pi_{1}$-null ends (as defined in \S \ref{topology}).
\end{enumerate}
\end{theorem}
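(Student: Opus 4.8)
The plan is to derive all three conclusions from the $J$-equivariant proper $1$-equivalence already available, together with the structural description of the model $\mathbb{Z}$-space. First I would reduce to the CW setting: Corollary~\ref{Corollary: Equivariant West theorem} furnishes a $J$-equivariant proper homotopy equivalence $g:Y\rightarrow Y'$ onto a locally finite polyhedron carrying a simplicial $J$-action (in particular a proper rigid action by cellular homeomorphisms), and $g$ is in particular a $J$-equivariant proper $1$-equivalence with $Y'$ again simply connected. Corollary~\ref{Cor: lifting proper 1- and 2-equivalences} then supplies a model tree $\Gamma$ and a $J$-equivariant proper $1$-equivalence $Y'\rightarrow\widetilde{X}_{\Gamma}$, so composition gives the asserted $\widetilde{f}:Y\rightarrow\widetilde{X}_{\Gamma}$; it descends to the proper $1$-equivalence $f:J\backslash Y\rightarrow X_{\Gamma}$ of Proposition~\ref{Prop: proper 1-equivalence downstairs}. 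Since proper $1$-equivalences induce bijections on sets of ends and $\widetilde{f}$ is equivariant, $\mathcal{E}nds(Y)$ is identified with $\mathcal{E}nds(\widetilde{X}_{\Gamma})$ as a $J$-set, while $f$ identifies $\mathcal{E}nds(J\backslash Y)$ with $\mathcal{E}nds(X_{\Gamma})$, hence with $\mathcal{E}nds(\Gamma)$ via Proposition~\ref{Proposition: End properties of the model space}. From here the enumerated statements are read off from Propositions~\ref{Prop: Understanding the universal cover of X} and~\ref{Prop: Model Z-space} and Remark~\ref{Remark. Subcases of infinite-ended Y}; in particular $\widetilde{X}_{\Gamma}$, hence $Y$, has $1$, $2$, or infinitely many ends.

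For $(1)$, a $2$-ended $Y$ forces $\widetilde{X}_{\Gamma}$ to be $2$-ended, so by Proposition~\ref{Prop: Understanding the universal cover of X}(1) we have $\Gamma=\Gamma^{+}=\{v_{0,1}\}$, $X_{\Gamma}=S_{0,1}^{1}$ is compact, and $\widetilde{X}_{\Gamma}\approx\mathbb{R}$ with covering group generated by unit translation. Thus $Y$ is equivariantly proper $1$-equivalent to a line; $j$ fixes both ends because unit translation of $\mathbb{R}$ does and the end-bijection is equivariant; and the action is cocompact because $J\backslash Y=f^{-1}(X_{\Gamma})$ is the proper preimage of a compact set.

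For $(2)$, an infinite-ended $Y$ forces $\Gamma^{+}\lneq\Gamma$, so $\Omega=\overline{\Gamma-\Gamma^{+}}$ is a nonempty forest $\{\Omega_{i,j}\}$ of infinite rooted trees; by Proposition~\ref{Prop: Model Z-space}(3) the $J$-action on $\mathcal{E}nds(\widetilde{X}_{\Gamma})$ fixes precisely one or two ends and acts with trivial stabilizers on the rest, the non-fixed ends being exactly those carried by the attached trees $n\Omega_{i,j}$, which $j$ shifts according to $n\Omega_{i,j}\mapsto(n+1)\Omega_{i,j}$; hence each $J$-orbit of non-fixed ends is a single transitive family. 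All of this transfers verbatim to $Y$ through the equivariant end-bijection (which carries orbits to orbits and stabilizers to stabilizers). The remaining claim---that each non-fixed end of $Y$ has a neighborhood projecting homeomorphically onto a neighborhood of an end of $J\backslash Y$---is the only genuinely topological, as opposed to homotopy-theoretic, assertion, and I expect it to be the main obstacle, since homeomorphism is not a proper-$1$-equivalence invariant. The key is to recast it: it suffices to produce \emph{some} neighborhood $U$ of the end whose $J$-translates are pairwise disjoint, for then $p|_{U}$ is an injective open map onto its image. In $\widetilde{X}_{\Gamma}$ such a $U'$ is available for every non-fixed end, namely a component of $q^{-1}(Q_{i,j})$ where $Q_{i,j}$ is a deep component of a model neighborhood of infinity of $X_{\Gamma}$ lying inside the relevant $\Omega_{i,j}$ (so $Q_{i,j}$ is a subtree on which the cover splits); the components of $q^{-1}(Q_{i,j})$ sit at distinct integer heights, hence are disjoint, and $q$ maps each one homeomorphically onto $Q_{i,j}$. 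Pulling back, $U=\widetilde{f}^{-1}(U')$ is a neighborhood of the corresponding end of $Y$, and $J$-equivariance of $\widetilde{f}$ (so $j^{n}U=\widetilde{f}^{-1}(j^{n}U')$) preserves pairwise disjointness; a short diagram chase with $q\widetilde{f}=fp$ then identifies $p(U)$ with $f^{-1}(Q_{i,j})$, a union of components of a neighborhood of infinity of $J\backslash Y$, which finishes $(2)$.

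For $(3)$, the equivariant end-bijection gives $|\mathcal{E}nds(Y)|=|\mathcal{E}nds(\widetilde{X}_{\Gamma})|$, and by the final paragraph of Remark~\ref{Remark. Subcases of infinite-ended Y} the latter is uncountable exactly when $\Gamma$ has uncountably many null ends. Under $\mathcal{E}nds(J\backslash Y)\cong\mathcal{E}nds(\Gamma)$, a null end of $\Gamma$ corresponds to an end of $J\backslash Y$ possessing a cofinal neighborhood whose inclusion induces the trivial map $H_{1}(\,\cdot\,;\mathbb{Z})\rightarrow H_{1}(J\backslash Y;\mathbb{Z})$---precisely the condition that produced the null ($0$-labelled) edges of $\Gamma$---and that condition is intrinsic to $J\backslash Y$ and matched by $f$ because $f$ induces a pro-isomorphism on $\operatorname{pro}$-$H_{1}$. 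Therefore $Y$ has uncountably many ends if and only if $J\backslash Y$ has uncountably many null ends.
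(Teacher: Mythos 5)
Your proposal is correct and follows essentially the route the paper intends: it assembles Corollary \ref{Corollary: Equivariant West theorem}, Proposition \ref{Prop: proper 1-equivalence downstairs}, Corollary \ref{Cor: lifting proper 1- and 2-equivalences}, and the model-space analysis of Propositions \ref{Prop: Understanding the universal cover of X}, \ref{Proposition: End properties of the model space}, \ref{Prop: Model Z-space} and Remark \ref{Remark. Subcases of infinite-ended Y}, transferring conclusions through the equivariant end bijection. Your extra argument for the clause about non-fixed ends projecting homeomorphically (using that the deck group permutes the components of $q^{-1}(Q_{i,j})$ simply transitively, so $U=\widetilde{f}^{-1}(U')$ has pairwise disjoint $J$-translates and $p|_{U}$ is an injective open map) correctly supplies a detail the paper leaves implicit, and the remaining transfers to the original ANR $Y$ (e.g.\ cocompactness via the descended proper map $J\backslash Y\rightarrow J\backslash Y'$) are routine.
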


\begin{corollary}
If an infinite-ended finitely presented group $G$ acts properly and
cocompactly on a simply connected, locally compact ANR $Y$, and $g\in G$ has
infinite order, then $\left\langle g\right\rangle \backslash Y$ has
uncountably many ends.
\end{corollary}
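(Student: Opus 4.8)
The plan is to deduce the statement directly from assertion (3) of Theorem~\ref{Theorem: General Theorem 1}, which says that for a simply connected locally compact ANR carrying a proper $\mathbb{Z}$-action, the total space has uncountably many ends if and only if the quotient has uncountably many null ends. So I only need two things: that $\langle g\rangle$ genuinely supplies a proper $\mathbb{Z}$-action on $Y$ of the type considered there, and that $Y$ itself has uncountably many ends.

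First I would set $J=\langle g\rangle$. Since $g$ has infinite order, $J\cong\mathbb{Z}$, and the restriction to $J$ of the given $G$-action is again proper (the preimage of a compact set under $J\times Y\to Y\times Y$ is a closed subset of the corresponding preimage under $G\times Y\to Y\times Y$). A proper action has finite point-stabilizers, and since $J$ is torsion-free the $J$-action is free; a free proper action on a locally compact Hausdorff space is properly discontinuous, so the homeomorphism $j\colon Y\to Y$ determined by $g$ generates a proper $\mathbb{Z}$-action by covering transformations, which is exactly the setting of Theorem~\ref{Theorem: General Theorem 1}.

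Next I would argue that $Y$ has uncountably many ends. Because $G$ acts properly and cocompactly on the connected, locally compact ANR $Y$, the set of ends of $Y$ is in bijection with the set of ends of $G$: when the $G$-action is free this is immediate from Corollary~\ref{Corollary: Equivariant West theorem}, which $G$-equivariantly and up to proper homotopy replaces $Y$ by the universal cover of a finite complex with fundamental group $G$, together with the proper-homotopy invariance of the end set; in general it is the standard consequence of a proper cocompact action. Now $G$ is finitely presented and infinite-ended, so by Hopf's theorem (recalled in the introduction) its end set is infinite, and, being compact, metrizable, totally disconnected and without isolated points, it is a Cantor set, hence uncountable. Therefore $Y$ has uncountably many ends.

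Finally I would apply assertion (3) of Theorem~\ref{Theorem: General Theorem 1} to the $\mathbb{Z}$-action of $J$: since $Y$ has uncountably many ends, $J\backslash Y$ has uncountably many null ends, in particular uncountably many ends, which is the desired conclusion. I do not anticipate a real obstacle here, since Theorem~\ref{Theorem: General Theorem 1} does all the substantive work; the only step deserving a sentence of justification is the uncountability of the end set of $G$ (equivalently, that an infinite-ended finitely generated group has a Cantor set of ends) and its transfer to $Y$ through the proper cocompact action.
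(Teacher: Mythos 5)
Your proposal is correct and matches the paper's intended argument: the corollary is stated without proof precisely because it follows from assertion (3) of Theorem \ref{Theorem: General Theorem 1} once one knows that $Y$, carrying a proper cocompact action of an infinite-ended finitely presented group, has uncountably many (Cantor set of) ends, and that the restricted $\langle g\rangle$-action is a proper (indeed free) $\mathbb{Z}$-action. Your added justifications (properness/freeness of the restricted action, transfer of the end space from $G$ to $Y$, uncountability for infinite-ended groups) are exactly the standard facts the paper leaves implicit.
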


\begin{remark}
Although a simple connectivity hypothesis on $Y$ was built into our
constructions, in anticipation of the most interesting theorems, it was not
needed to obtain a proper 1-equivalence between $\left\langle j\right\rangle
\backslash Y$ and $X_{\Gamma}$. Hence, the conclusions of Theorem
\ref{Theorem: General Theorem 1} are valid provided $Y$ is connected.
\end{remark}

For the next theorem and its corollary, we add the assumption that $j$ is coaxial.

\begin{theorem}
\label{Theorem: General Theorem 2}Let $Y$ be a simply connected, locally
compact ANR admitting a $%
\mathbb{Z}
$-action by covering transformations generated by a coaxial homeomorphism
$j:Y\rightarrow Y$ and let $J=\left\langle j\right\rangle $. Then $Y$ is
properly 2-equivalent to its model $%
\mathbb{Z}
$-space $\widetilde{X}_{\Gamma}$ via maps that are $J$-equivariant on
1-skeleta. As a result, $Y$ has 1,2,or infinitely many ends, and:

\begin{enumerate}
\item \label{Assertion 1 of General Theorem 2}if $Y$ is 2-ended, the
$J$-action is cocompact and $Y$ is properly $2$-equivalent to a line,

\item if $Y$ is $1$-ended, then $Y$ is properly $2$-equivalent to $\Lambda
^{+}\times%
\mathbb{R}
$, where $\Lambda^{+}$ is an infinite rooted tree, and the $%
\mathbb{Z}
$-action on $\Lambda^{+}\times%
\mathbb{R}
$ is generated by a homeomorphism $\sigma_{\infty}\times t$, where
$\sigma_{\infty}$ fixes the root of $\Lambda^{+}$ and $t\left(  r\right)
=r+1$,

\item if $Y$ is infinite-ended, then $J$ stabilizes exactly one or two of
those ends of $Y$; and

\begin{enumerate}
\item if two ends are stabilized, $Y$ is properly 2-equivalent to $%
\mathbb{R}
\cup\left(  \sqcup_{i\in%
\mathbb{Z}
}\Omega_{i}\right)  $, where $\left\{  \Omega_{i}\right\}  _{i\in%
\mathbb{Z}
}$ is a collection of isomorphic rooted trees with the root of $\Omega_{i}$
identified to $i\in%
\mathbb{R}
$, and the $J$-action on $%
\mathbb{R}
\cup\left(  \sqcup_{i\in%
\mathbb{Z}
}\Omega_{i}\right)  $ is an extension of translation by $+1$ on $%
\mathbb{R}
$,

\item if only one end is stabilized, then $Y$ is properly $2$-equivalent to
$\left(  \Lambda^{+}\times%
\mathbb{R}
\right)  \cup\left(  \cup\Omega_{m,n}\right)  $, where $\left\{  \Omega
_{m,n}\right\}  $ is a locally finite collection of rooted trees, with each
$\Omega_{m,n}$ attached at its root to a vertex of $\Lambda^{+}\times\{n\}$,
and for each fixed $m$, $\left\{  \Omega_{m,n}\right\}  _{n\in%
\mathbb{Z}
}$ is a pairwise disjoint subcollection on which $J$ acts transitively, taking
roots to roots,
\end{enumerate}

\item $Y$ has uncountably many ends if and only if $J\backslash Y$ has
uncountably many null ends.
\end{enumerate}

\noindent Furthermore, if $j$ is strongly coaxial, the proper 2-equivalences
can be chosen to be $%
\mathbb{Z}
$-equivariant.
\end{theorem}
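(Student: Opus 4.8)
The plan is to assemble Theorem~\ref{Theorem: General Theorem 2} from machinery already in place, with essentially no new computation: it is Theorem~\ref{Theorem: General Theorem 1} upgraded from a proper $1$-equivalence to a proper $2$-equivalence, together with a reading-off of explicit normal forms for the model $\mathbb{Z}$-space. \emph{First I would reduce to the cellular case.} Apply Corollary~\ref{Corollary: Equivariant West theorem} to replace $Y$ by a locally finite polyhedron carrying a simplicial $J$-action, via a $J$-equivariant proper homotopy equivalence; by Lemma~\ref{Lemma: coaxials under equivariant p.h.e.} this transfer preserves the coaxial (resp.\ strongly coaxial) hypothesis, and since coaxiality of a cellular homeomorphism is detected on the $2$-skeleton (see the remark to that effect in \S\ref{Section: Coaxial and Strongly Coaxial homeomorphisms}) we may work there. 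Proper $2$-equivalences compose and pull back across the equivariant proper homotopy equivalence, so every conclusion obtained for the polyhedron descends to $Y$; thus assume henceforth that $Y$ is a simply connected strongly locally finite CW complex and $j$ a cellular homeomorphism generating a proper rigid $\mathbb{Z}$-action.

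\emph{Next, the core $2$-equivalence and the end count.} Apply Proposition~\ref{Prop: proper 2-equiv with only coaxial hypothesis} to obtain the model tree $\Gamma$ and a proper $2$-equivalence $\widetilde f\colon Y\to\widetilde X_{\Gamma}$ that is $J$-equivariant on $1$-skeleta; when $j$ is strongly coaxial, Corollary~\ref{Cor: lifting proper 1- and 2-equivalences} promotes this to a genuinely $J$-equivariant proper $2$-equivalence. In either case $\widetilde f$ is in particular a proper $1$-equivalence, hence induces a bijection between the ends of $Y$ and those of $\widetilde X_{\Gamma}$, and Proposition~\ref{Prop: Understanding the universal cover of X} shows this set has cardinality $1$, $2$, or $\infty$. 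The assertion that exactly one or two ends are $J$-stabilized, the rest having trivial stabilizers and all non-fixed ends being simply connected, is precisely the content of Proposition~\ref{Prop: Model Z-space}, transported through the equivariant-on-$1$-skeleta map $\widetilde f$. For the parts of the statement that do not involve dimension-$2$ data (in particular cocompactness of the $J$-action in the $2$-ended case), I would simply invoke Theorem~\ref{Theorem: General Theorem 1}.

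\emph{Then, the normal forms.} Read the individual cases off Proposition~\ref{Prop: Understanding the universal cover of X} and Remark~\ref{Remark. Subcases of infinite-ended Y}, using Proposition~\ref{Prop: Result of a reduction} to pass to the tidy models. If $Y$ is $2$-ended, so is $\widetilde X_{\Gamma}$, forcing $\Gamma=\Gamma^{+}=\{v_{0,1}\}$ and $\widetilde X_{\Gamma}\approx\mathbb{R}$ with $J$ acting by $t(r)=r+1$. If $Y$ is $1$-ended, then $\Gamma=\Gamma^{+}$ is infinite and $\widetilde X_{\Gamma}\approx\Lambda^{+}\times\mathbb{R}$ with covering generator $\sigma_{\infty}\times t$, which is the asserted form. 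If $Y$ is infinite-ended, then $\Gamma^{+}\lneq\Gamma$: the two-fixed-ends case corresponds to $\Gamma^{+}$ (hence $\Lambda^{+}$) finite, where the reduction described in subcase (a) of Remark~\ref{Remark. Subcases of infinite-ended Y} collapses $\Lambda^{+}$ and, by Proposition~\ref{Prop: Result of a reduction}, yields the equivariant model $\mathbb{R}\cup(\sqcup_{i}\Omega_{i})$; the one-fixed-end case corresponds to $\Gamma^{+}$ infinite and gives directly the model $(\Lambda^{+}\times\mathbb{R})\cup(\cup\,\Omega_{m,n})$ of case~(3) of Proposition~\ref{Prop: Understanding the universal cover of X}. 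The uncountable-ends dichotomy is the last paragraph of Remark~\ref{Remark. Subcases of infinite-ended Y}, carried across the end-bijection once the ends of $J\backslash Y$ are identified with those of $\Gamma$ via the proper $1$-equivalences $f\colon J\backslash Y\to X_{\Gamma}$ and $\Gamma\hookrightarrow X_{\Gamma}$. Since every reduction used is an \emph{equivariant} proper homotopy equivalence, beginning from the equivariant $\widetilde f$ of Corollary~\ref{Cor: lifting proper 1- and 2-equivalences} produces the strongly coaxial (fully $\mathbb{Z}$-equivariant) version of each normal form.

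\emph{The main obstacle.} The genuine difficulty — extending $\widetilde g^{(1)}$ over the $2$-cells of $\widetilde X_{\Gamma}$ under only the coaxial hypothesis, via the height and narrowness estimates — is already absorbed into Proposition~\ref{Prop: proper 2-equiv with only coaxial hypothesis}. What remains delicate is bookkeeping: matching the ``one versus two $J$-fixed ends'' dichotomy of Proposition~\ref{Prop: Model Z-space} with the ``$\Gamma^{+}$ finite versus infinite'' dichotomy of the model-space analysis, and checking that the reductions producing the clean normal forms can be carried out $J$-equivariantly, so that the strong-coaxial equivariance is not lost in the passage from $\widetilde X_{\Gamma}$ to its reduced form.
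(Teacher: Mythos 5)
Your proposal matches the paper's own treatment: the paper gives no separate argument for Theorem \ref{Theorem: General Theorem 2} beyond combining Propositions \ref{Prop: proper 1-equivalence downstairs}, \ref{Prop: proper 2-equivalence downstairs}, \ref{Prop: proper 2-equiv with only coaxial hypothesis} and Corollary \ref{Cor: lifting proper 1- and 2-equivalences} with the model-space analyses in Propositions \ref{Prop: Understanding the universal cover of X}, \ref{Proposition: End properties of the model space}, \ref{Prop: Model Z-space} and Remark \ref{Remark. Subcases of infinite-ended Y}, which is exactly your assembly, including the reduction to the cellular case and the equivariant reductions to the normal forms. Your bookkeeping (two fixed ends $\leftrightarrow$ $\Gamma^{+}$ finite, one fixed end $\leftrightarrow$ $\Gamma^{+}$ infinite, equivariance preserved under reductions via Proposition \ref{Prop: Result of a reduction}) is correct, so the proof is sound and essentially identical in route to the paper's.
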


\begin{corollary}
\label{Corollary: to General Theorem 2}Let $Y$ be a simply connected strongly
locally finite CW complex admitting a $%
\mathbb{Z}
$-action by covering transformations generated by a coaxial homeomorphism
$j:Y\rightarrow Y$. Then $Y$ is 1-, 2-, or infinite-ended. Moreover,

\begin{enumerate}
\item if $Y$ is 2-ended, both ends are simply connected and the $%
\mathbb{Z}
$-action fixes those ends;

\item \label{Assertion 2 of Corollary to General Theorem 2}if $Y$ is 1-ended,
that end is semistable and $\operatorname*{pro}$-$\pi_{1}\left(  Y,r\right)  $
can be represented by an inverse sequence of surjections between finitely
generated free groups%
\[
F_{1}\twoheadleftarrow F_{2}\twoheadleftarrow F_{3}\twoheadleftarrow\cdots
\]
and $\operatorname*{pro}$-$H_{1}\left(  Y;%
\mathbb{Z}
\right)  $ can be represented by an inverse sequence of surjections between
finitely generated free abelian groups%
\[%
\mathbb{Z}
^{n_{1}}\twoheadleftarrow%
\mathbb{Z}
^{n_{2}}\twoheadleftarrow%
\mathbb{Z}
^{n_{3}}\twoheadleftarrow\cdots
\]

\item if $Y$ is infinite-ended, the $%
\mathbb{Z}
$-action fixes precisely one or two ends with the others having trivial
stabilizers. All non-fixed ends are simply-connected. If two ends are fixed,
those ends are simply connected as well. If just one end is fixed, that end is
semistable with $\operatorname*{pro}$-$\pi_{1}\left(  Y,r\right)  $
representable by an inverse sequence like the one described in Assertion
(\ref{Assertion 2 of Corollary to General Theorem 2}). Similarly,
$\operatorname*{pro}$-$H_{1}\left(  Y;%
\mathbb{Z}
\right)  $ is representable by a sequence like the one found in Assertion
(\ref{Assertion 2 of Corollary to General Theorem 2}), with all nontrivial
contributions coming from the fixed end.
\end{enumerate}
\end{corollary}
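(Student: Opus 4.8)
The plan is to obtain this as a direct consequence of Theorem \ref{Theorem: General Theorem 2} together with the end analysis of the model $\mathbb{Z}$-spaces carried out in Propositions \ref{Prop: Understanding the universal cover of X} and \ref{Prop: Model Z-space}. Since a strongly locally finite CW complex is in particular a locally compact ANR, Theorem \ref{Theorem: General Theorem 2} applies and supplies a model tree $\Gamma$ together with a proper $2$-equivalence $\widetilde{f}:Y\rightarrow\widetilde{X}_{\Gamma}$ that is $J$-equivariant on $1$-skeleta. A proper $2$-equivalence is in particular a proper $1$-equivalence, hence induces a bijection $\mathcal{E}nds(Y)\rightarrow\mathcal{E}nds(\widetilde{X}_{\Gamma})$; because end data is carried by the $1$-skeleton and $\widetilde{f}$ is $J$-equivariant there, this bijection intertwines the two $J$-actions on ends. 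By Proposition \ref{Prop: Understanding the universal cover of X}, $\widetilde{X}_{\Gamma}$ has $1$, $2$, or infinitely many ends, so the same holds for $Y$, and the assertions about how many ends are fixed---exactly one or two, with the remaining ends having trivial stabilizers---transfer verbatim from Proposition \ref{Prop: Model Z-space}.

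Next I would transport the homotopy- and homology-theoretic conclusions. By Proposition \ref{Prop: n-equivalences preserve (n-1)-invariants} and its homological analogue, strong local finiteness of $Y$ (and of $\widetilde{X}_{\Gamma}$) lets us conclude that $\widetilde{f}$ induces pro-isomorphisms $\operatorname{pro}$-$\pi_{k}(Y,r)\cong\operatorname{pro}$-$\pi_{k}(\widetilde{X}_{\Gamma},\widetilde{f}\circ r)$ and $\operatorname{pro}$-$H_{k}(Y;\mathbb{Z})\cong\operatorname{pro}$-$H_{k}(\widetilde{X}_{\Gamma};\mathbb{Z})$ for $k\leq 1$ and every proper ray $r$ in $Y$. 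Simple connectivity of an end and semistability of an end are detected by $\operatorname{pro}$-$\pi_{1}$ along any base ray converging to it (for semistability, Proposition \ref{Prop: pro-pi semistable}), and the pro-isomorphism above matches rays converging to corresponding ends; hence an end of $Y$ is simply connected, resp. semistable, exactly when the corresponding end of $\widetilde{X}_{\Gamma}$ is. Feeding in the explicit representations from Proposition \ref{Prop: Model Z-space} then gives, case by case, the claimed inverse sequences of surjections between finitely generated free groups for $\operatorname{pro}$-$\pi_{1}(Y,r)$ and between finitely generated free abelian groups for $\operatorname{pro}$-$H_{1}(Y;\mathbb{Z})$, with all nontrivial content concentrated at the fixed end in the one-fixed-end subcase.

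Finally I would assemble the three enumerated cases by reading Proposition \ref{Prop: Model Z-space} through the proper $2$-equivalence: the $2$-ended statement is case (\ref{Prop: Model Z-space case 1}), the $1$-ended statement is case (\ref{Prop: Model Z-space case 2}), and the infinite-ended statement is case (\ref{Prop: Model Z-space case 3}), which already records that exactly one or two ends are fixed, that all non-fixed ends (and both fixed ends, when there are two) are simply connected, and that when a single end is fixed it is semistable with $\operatorname{pro}$-$\pi_{1}$ and $\operatorname{pro}$-$H_{1}$ of the stated shape. I do not anticipate a genuine obstacle: the substantive work is already contained in Theorem \ref{Theorem: General Theorem 2} and Proposition \ref{Prop: Model Z-space}, and the only points requiring care---that the end bijection and the $\operatorname{pro}$-$\pi_{1}$ pro-isomorphisms are compatible with the $J$-actions and with the matching of base rays---follow from the $J$-equivariance of $\widetilde{f}$ on $1$-skeleta together with the fact that end data and $\operatorname{pro}$-$\pi_{1}$ in degrees $\leq 1$ depend only on the $1$- and $2$-skeleta.
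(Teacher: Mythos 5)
Your proposal is correct and follows the same route the paper intends: the section of General Conclusions explicitly says the corollary is obtained by combining the proper $2$-equivalence of Theorem \ref{Theorem: General Theorem 2} (built from Proposition \ref{Prop: proper 2-equiv with only coaxial hypothesis} and Corollary \ref{Cor: lifting proper 1- and 2-equivalences}) with the model-space analysis in Propositions \ref{Prop: Understanding the universal cover of X} and \ref{Prop: Model Z-space}, using Proposition \ref{Prop: n-equivalences preserve (n-1)-invariants} to transfer ends, $\operatorname{pro}$-$\pi_{1}$, and $\operatorname{pro}$-$H_{1}$, and the $J$-equivariance on $1$-skeleta to transfer the action on ends. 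Nothing essential is missing.
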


\begin{remark}
\emph{I}f desired, the $%
\mathbb{Z}
$-equivariance of the proper 2-equivalences on 1-skeleta can be used to
specify the action of $J$ on $\operatorname*{pro}$-$\pi_{1}\left(  Y,r\right)
$ and $\operatorname*{pro}$-$H_{1}\left(  Y;%
\mathbb{Z}
\right)  $. In particular, they will look like the easily understood $%
\mathbb{Z}
$-actions on $\operatorname*{pro}$-$\pi_{1}\left(  \Lambda^{+}\times%
\mathbb{R}
\right)  $ and $\operatorname*{pro}$-$H_{1}\left(  \Lambda^{+}\times%
\mathbb{R}
;%
\mathbb{Z}
\right)  $ generated by $\sigma_{\infty}\times t$, where $\sigma_{\infty}$
fixes the root of $\Lambda^{+}$ and $t\left(  r\right)  =r+1.$
\end{remark}

\bibliographystyle{amsalpha}
\bibliography{bibliography}
{}

\end{document}